\newtheorem{theorem}{Theorem}
\newtheorem{definition}{Definition}
\newtheorem{lemma}{Lemma}
\newtheorem{prop}{Proposition}
\newtheorem{corollary}{Corollary}
\title{Ostrowski's ``On Dirichlet Series and Algebraic Differential Equations''}
\author{Erik Christian Hansen, Yonathan Stone, and Jesse Wolfson}
\thanks{This translation was supported in part by NSF Grant No. DMS-1944862}
\address{Department of Mathematics, University of California-Irvine}
\email{echanse1@uci.edu}
\email{ystone@uci.edu}
\email{wolfson@uci.edu}
\begin{document}

\maketitle

\maketitle

\vspace{16pt}

\begin{center}
		Original Bibliographic Information
\end{center}
\begin{itemize}
	\item[] \textbf{Author:} Alexander Ostrowski
	\item[] \textbf{Title:} \"Uber Dirichletsche Reihen und algebraisehe Differentialgleichungen.
	\item[] \textbf{Year:} 1920
	\item[] \textbf{Language:} German
	\item[] \textbf{Journal:} Math. Zeit., vol. 8, pp. 241-298
\end{itemize}
\vspace{16pt}

\tableofcontents
	
\newpage
\section*{Introduction}

After Hölder's proof that the \textit{gamma function} does not satisfy any algebraic differential equation
\footnote{Math. Ann., 28 (1887), pp. 1-13, for other proofs of the theorem see E.H. Moore, Math. Ann., 48: 49-74 (1897); N. Nielsen, Handbook of the theory of the gamma function, Leipzig, Teubner (1906); A. Ostrowski, Math. Ann. 79 (1919), pp. 286-284.},
the same was shown for many other functions. Among these, Hilbert's theorem that the \textit{Riemann $\zeta$-function} does not satisfy any algebraic differential equation
\footnote{Hilbert, Mathematical Problems, C. R. du 2. congrès international des math., Paris 1902, p. 100.
The execution of the proof goes to V.E.E. Stadigh, Dissertation, Helsingfors 1902.
For references in the literature on functions that do not satisfy any algebraic differential equation, reference should be made to the introduction to Stadigh's dissertation and the treatise by R.D. Carmichael.} belongs to some of the most interesting results.
In the first section of this treatise I will prove that this same property holds for all Dirichlet series
$$\sum\frac{a_n}{n^s}$$
\textit{whose coefficients $a_n$ are all non-zero}, or
that \textit{the indices $n$ of all non-zero coefficients have infinitely many prime divisors}. More generally, this property is shared by all Dirichlet series

$$\sum a_n e^{-\lambda_ns}$$
possessing the property that none of the $\lambda_n$ can be expressed as integer linear combinations of finitely many of the other $\lambda_n$, that is to say that infinitely many of the $\lambda_n$ are linearly independent over the integers.
We will proceed to prove this theorem in an even more generalized form (\hyperlink{Theorem 1}{Theorems 1} and \hyperlink{Theorem 6}{6}), as we will prove the non-existence of an algebraic \textit{difference differential equation}, that is, a functional equation of the form
$$F(x, f(x), f'(x),\ldots,f^{(\nu)}(x), f(x+h_1),\ldots,$$$$ f^{(\nu_1)}(x+h_1),\ldots,f(x+h_\mu),\ldots,f^{(\nu_\mu)}(x+h_\mu)) = 0$$
where $h_1,h_2,\ldots,h_n$ are real numbers, and F is a polynomial of the arguments listed above.
This is of importance insofar that we can show that the Riemann $\zeta$-function does not satisfy any algebraic \textit{difference} differential equations.  This in turn allows us to verify the open claim conjectured by Hilbert roughly 20 years ago, namely that the function $\zeta(x,s)$, defined by the series:
$$\zeta(x,s)=\frac{x}{1^s}+\frac{x^2}{2^s}+\frac{x^3}{3^s}+\ldots$$
for $|x|\leq1,\text{Re}(s)>1$ fails to satisfy any algebraic partial differential equation when viewed as a two-variable function in x and s
\footnote{Mathematical problems, 1. c. P. 101.}.
The proof of this fact is given in \S2 (\hyperlink{Theorem 2}{Theorem 2}) using the
functional equation
$$x\frac{\partial\zeta(x,s)}{\partial x} = \zeta(x,s-1)$$
provided by Hilbert in 1. c. for which $\zeta(x,s)$ is a solution. We then provide a second proof of this using
neither Hilbert's functional equation nor the results obtained in the first section. In doing so we end up proving the non-existence of algebraic PDEs admitting solutions of the form
$$\sum \frac{f_n(x)}{n^s}$$
where the $f_n(x)$ are arbitrary polynomials of degree n and whose convergence conditions are fulfilled (\hyperlink{Theorem 3}{Theorem 3}).
In \S3 we draw on the properties verified in \S2 to establish further consequences.
For instance, it follows that it is impossible to construct $\zeta(x,s)$ using iterated composition of arbitrary analytic functions of one variable and arbitrary algebraic functions of several variables (\hyperlink{Theorem 4}{Theorem 4})
\footnote{This question was motivated by Hilbert's well-known theorem regarding the existence of analytic functions of three variables which cannot be constructed via iterated composition of two-variable functions. 1. c. P. 92}.
The eliminations required for this proof can be carried out in an entirely strict and particularly simple and clear fashion using a (of course long-established) algebraic lemma, for which I have included a proof based on field-theoretical observations in \S4, and
which may be used in a similar manner in many related pursuits. After the investigations in \S1 have been somewhat supplemented in \S4, we will more generally examine the properties of analytic functions which are also solutions to algebraic differential equations, i.e. \textit{``algebraic-transcendental functions''}, in \S5 .
The results in \S5 then allow us to apply the theorems from \S1 and \S4 to power series.  This results in, for example, the theorem that one can obtain power series not satisfying any algebraic differential equations in infinitely many ways by simply multiplying certain terms by $-1$ (\hyperlink{Theorem 11}{Theorem 11}).  In addition we obtain a few more results, of which I would like to highlight the theorem that the series
$$\frac{x}{1^s}+\frac{x^2}{2^s}+\frac{x^3}{3^s}+\ldots$$
fails to satisfy any algebraic differential equation for any constant rational values of $s$.
The same property for \textit{irrational} values of $s$ is likely verifiable without much additional work, however I failed produce any rigorous proof of this statement.
In the next section (\S6), however, I am able to deduce that the set of values of $s$ for which $\zeta(x,s)$ does satisfy an algebraic differential equation is \textit{countable} as a consequence of a more general proposition. In \S6 I tackle the more general question of which Dirichlet series satisfy algebraic differential equations in the first place. Using the results from the first two sections, it follows that such Dirichlet
series of the type
$$\sum \frac{a_n}{n^s}$$
and must be of the form
$$F(n_1^{-s},n_2^{-s},\ldots,n_\mu^{-s}),$$
where $F(x_1,x_2,\ldots,x_\mu)$ is a power series in $\mu$ arguments. Moreover, one can easily construct such power series $F$ for which each choice of the integers $n_1,n_2,\ldots,n_\mu$ 
represent functions in $s$ which satisfy algebraic differential equations.
This follows since the existence of functions $F(x_1,x_2,\ldots,x_\mu)$ with the property that $F(\varphi_1(x),\varphi_2(x),\ldots,\varphi_\mu(x))$ satisfies an algebraic differential equation can be established as soon as $\varphi_1(x),\varphi_2(x),\ldots,\varphi_\mu(x)$ satisfy the same equation.
We then prove that a function $F(x_1,x_2,\ldots,x_\mu)$ possesses this last property if and only if it satisfies a so-called Mayer system\footnote{Translator's note: the author frequently uses the German word \textbf{System} when talking to refer to objects one could refer to as \textbf{collections} in modern English.  We have elected to translate it quite literally as ``system'' to highlight the use of ostensibly now outdated language in mathematical writing.} of algebraic partial differential equations (\hyperlink{Theorem 15}{Theorem 15}). 
(Our method of proof results in an even stronger result.)
If a general Dirichlet series of the type 
$$\sum a_n e^{-\lambda_n s}$$
satisfies an algebraic differential equation, we can use the results from \S1 and \S2 to represent it in the form 
$F(e^{-\lambda_1s},e^{-\lambda_2s},\ldots,e^{-\lambda_\mu s})$,
where $F$ is a power series iterated over both positive and negative powers of its arguments and $\lambda_1,\lambda_2,\ldots,\lambda_\mu$ are positive linearly independent real numbers.
For Dirichlet series of the type 
$$\sum a_ne^{-\lambda_ns},$$
if the power series $F$ is iterated over
positive powers of its arguments, and only in this case, one can establish the existence of an analytic function which is represented by the power series $F$ in a given domain of convergence. 
In any case, even given the existence of such a function $F$, the fact that a linearly independent system of values $\lambda_1,\lambda_2,\ldots,\lambda_\mu,$
gives us a Dirichlet series satisfying an algebraic differential equation cannot be used to conclude that it possesses the stated property in the theorem above.
However, it can be shown that it satisfies an algebraic partial differential equation (\hyperlink{Theorem 19}{Theorem 19}), and moreover this property is still preserved even if the power series $F(x_1,x_2,\ldots,x_\mu)$ does not have $2\mu$-dimensional domain of convergence.
In this case, there exists an algebraic partial differential equation \textit{formally} satisfied by the power series $F(x_1,x_2,\ldots,x_\mu)$.
On the other hand, it can be proven that if the function $F(e^{-\lambda_1s},e^{-\lambda_2s},...,e^{-\lambda_{\mu}s})$ satisfies an algebraic differential equation for each linearly independent system of values consisting of the positive $\lambda$, then the same holds for
$$F(\varphi_1(s),\varphi_2(s),\ldots,\varphi_\mu(s))$$
as soon as the $\varphi_1(s),\ldots,\varphi_\mu(s)$ satisfy the same algebraic differential equation (\hyperlink{Theorem 20}{Theorem 20}).  I have provided a proof of this statement at the conclusion of \S7.
These last results can also be adapted to the integrals of algebraic differential equations, whose expansions contain arbitrary powers of $x$, which are ideas I also delve into in \S7. 
These results are closely related to the methods for investigating the integrals of
systems of ordinary differential equations of the first order near critical points due to Poincaré, Picard and others\footnote{E. Picard, Treaty of Analysis, 2 (Ed. 2), S. 1-40}.\\

In \S8 I will finally prove (\hyperlink{Theorem 21}{Theorems 21} and \hyperlink{Theorem 22}{22}) that no power series
$$\sum a_i x^{n_i}, (n_{i-1}<n_i<\ldots\text{ ad inf. })$$
in which the $n_i$ are linearly independent over the integers, satisfies an \textit{analytic} differential equation near $x=0$. 
(For complex $n_i$ a few more prerequisites are necessary.)
Using this theorem we give an example of such severe (branching) singularities, that the functions associated with them cannot even be obtained as solutions of \textit{analytic} differential equations at the aforementioned points. 
The proof proceeds in exactly the same way as in \S1, with the
exception of a step that is entirely obvious in the case of algebraic differential equations. This results from us dealing with a phenomenon, which in particular includes the theorem that any analytic function satisfying an analytic differential equation also satisfies an analytic differential equation in which it is not a \textit{singular integral.} 
A direct proof of this fact presents difficulties, since Weierstrass's preparation theorem can famously only be applied to power series in several variables after a suitable linear transformation.  However, such transformations are not applicable for differential equations.
Therefore, we must take a detour to circumvent this difficulty.\\

The requisite material we invoke in our proofs is very elementary.
From the theory of Dirichlet series we use only the uniqueness theorem, according to which a convergent Dirichlet series $\sum a_i e^{-\lambda_i s}$, whose terms are given in ascending order of the $\lambda$, with the property that as a function of $s$ it vanishes for sufficiently large positive $s$, also has the property that all the coefficients $a_i$ vanish.
Otherwise, most of the proofs can be reduced to simple formal results from algebra\footnote{The present treatise is a copy of an inaugural dissertation accepted by the Philosophical Faculty of the University of Göttingen.}.
\newpage

\section{On a class of Dirichlet series with the property of satisfying no algebraic difference differential equations.}
\begin{definition}
An algebraic difference differential equation is a functional equation of the form
\begin{align}
   F(x,f^\nu(x+h_\mu))=0
\end{align}

where $h_1,h_2,\ldots$ are finitely many real numbers and $F$ is a polynomial in its arguments.
\end{definition}

\hypertarget{Proposition 1}{}
\begin{prop}
If an analytic function $\varphi(x)$ satisfies an algebraic difference differential equation of the form (1), it also satisfies an algebraic difference differential equation
$$F(f^{(v)}(x+h_\mu))=0$$
in which $F$ does not explcitly depend on the independent variable $x$.
\end{prop}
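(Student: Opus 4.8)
\emph{Plan.} The idea is to eliminate the explicit occurrence of $x$ by differentiating the given relation once and then combining the two relations algebraically to drop the degree in $x$. Write the hypothesis as $G(x) := F\big(x,\varphi^{(\nu_i)}(x+h_j)\big) \equiv 0$ on the domain where $\varphi$ is analytic. Since $G$ is analytic and identically zero, $G'(x)\equiv 0$ as well. Introduce the total–derivative operator $D = \partial_x + D'$, where $D' = \sum_{j,k} y_j^{(k+1)}\,\partial_{y_j^{(k)}}$ acts on the polynomial ring in the jet variables $y_j^{(k)}$ standing for $\varphi^{(k)}(x+h_j)$. Then $DF$ is again a polynomial in $x$ and the jet variables, involving the same finite set of shifts $h_j$ with only higher orders appearing, and $(DF)\big|_\varphi = G' \equiv 0$. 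Thus $\varphi$ satisfies the second algebraic difference differential equation $DF = 0$, and we have stayed inside the class (1).

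Among all nonzero difference–differential polynomials annihilating $\varphi$, I would choose one, call it again $F$, of least degree $d$ in the explicit variable $x$. If $d = 0$ there is nothing to prove, so suppose toward a contradiction that $d \geq 1$ and write $F = P_d\,x^d + \dots + P_0$ with the $P_j$ free of $x$ and $P_d \neq 0$. The leading ($x^d$) coefficient of $DF$ is $D'P_d$, so the combination
$$\widetilde F := P_d\,(DF) - (D'P_d)\,F$$
has degree at most $d-1$ in $x$ and still satisfies $\widetilde F\big|_\varphi = 0$. By minimality of $d$, the polynomial $\widetilde F$ must vanish identically. Reading off its coefficient of $x^{d-1}$ then yields
$$d\,P_d^{\,2} = P_{d-1}\,D'P_d - P_d\,D'P_{d-1},$$
which is precisely the assertion that $v := -P_{d-1}/(d\,P_d)$, a rational function of the jet variables, satisfies $D'v = 1$.

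The main obstacle is thus reduced to a single clean impossibility: the operator $D'$ cannot have $1$ in its image over the field of rational functions in the jet variables. I would settle this by a \emph{top-variable} argument. If $v$ involves jet variables of maximal order $k$, then $D'v$ contains the strictly higher, algebraically independent variables $y_j^{(k+1)}$ linearly, with coefficients $\partial_{y_j^{(k)}}v$, and these cannot cancel against the constant $1$; hence $v$ is independent of every order-$k$ variable, and descending we find $v$ constant, forcing $D'v = 0 \neq 1$. This contradiction rules out $d \geq 1$, so the minimal degree is $0$ and $\varphi$ satisfies an algebraic difference differential equation free of the explicit variable $x$. The argument is insensitive to the presence of several shifts: the only change is that several distinct top-order variables $y_{j'}^{(k+1)}$ may occur, each of which must then drop out of $v$, and the descent proceeds exactly as before.
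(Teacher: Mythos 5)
Your proof is correct, and it takes a genuinely different route from the paper's. Ostrowski assumes $F$ irreducible in its arguments, differentiates once to obtain $F^*$, observes that $F^*$ contains a derivative $f^{(\nu)}(x+h_\mu)$ not occurring in $F$ and hence cannot be divisible by the irreducible $F$, and then eliminates $x$ in one stroke: the resultant of $F$ and $F^*$ with respect to $x$ is a nonvanishing $x$-free polynomial $F^{**}$, and the identity $F^{**}=M_1F+M_2F^{*}$ shows it annihilates $\varphi$. You instead run a minimal-$x$-degree descent: a single pseudo-division step $\widetilde F = P_d(DF)-(D'P_d)F$ drops the degree, minimality forces $\widetilde F\equiv 0$ as a polynomial, and reading off the $x^{d-1}$ coefficient converts the contradiction into the clean algebraic statement that $D'v=1$ is unsolvable in rational functions of the jet variables, which your top-order-variable argument correctly refutes (the new variable $y_{j_0}^{(k_0+1)}$ occurs in exactly one term of $D'v$, linearly, with nonzero coefficient). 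Both proofs ultimately rest on the same structural fact -- total differentiation introduces a genuinely new, algebraically independent jet variable -- but you package it as a property of the derivation $D'$, whereas Ostrowski packages it as non-divisibility feeding into elimination theory. What your route buys is self-containment: no appeal to irreducibility, unique factorization of multivariate polynomials, or nonvanishing of resultants; the only external input is characteristic zero (the division by $d$, harmless here). What the paper's route buys is an explicit closed form for the $x$-free equation as a resultant, a template Ostrowski immediately reuses in the remark following the proposition to handle partial differential equations in several independent variables; your descent adapts to that setting too, but requires iterating the whole argument once per variable rather than once per resultant.
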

\begin{proof}
We may assume that F is an irreducible degree $n$ polynomial in its arguments.
Differentiating (1) in $x$, we obtain a new algebraic difference differential equation for $\varphi$,
$$F^*(x,f^{(\nu)}(x+h_\mu))=0$$
where $F^*(x,f^{(\nu)}(x+h_\mu))$ is a polynomial in its arguments \textit{whose degree is no greater than $n$.}
On the other hand, $F^*$ contains at least one expression of the form $f^{\nu}(x+h_\mu)$ due to differentiation which does not occur in $F$. 
Therefore $F^*$ cannot be divisible by $F$, and since $F$ is irreducible, the resultant of the polynomial $F$ and $F^*$ must be a polynomial $F^{**}(f^{(\nu)}(x+h_\mu))$ that does not vanish identically. 
If not, the polynomials $F$ and $F^*$ would have a common divisor which is entire with respect to $x$ and rational with respect to $f^{\nu}(x+h_\mu)$.
Therefore, according to the famous theorems concerning the unique factorization of polynomials into irreducible factors, they would also have a common factor which is entire in $x$ and  $f^{\nu}(x+h_\mu)$, which is impossible.
Thusly, we consider the identity, 
$$F^{**} = M_1F + M_2F^{*}$$
where $M_1,M_2$ are polynomials in $x, f^{\nu}(x+h_\mu)$.
 Therefore, $\varphi(x)$ also satisfies the algebraic difference differential equation
 $$F^{**}(f^{(\nu)}(x+h_\mu))=0$$
 in which x does not show up explicitly as an argument, as desired.
\end{proof}

Via iterated applications of the same procedure used above, one can prove without any additional machinery that \textit{if an analytic function $ \varphi (x, y, z, \ldots) $ satisfies an algebraic partial differential equation, it also satisfies an algebraic partial differential equation not explicitly depending on $ x, y , z, \ldots $.}

\hypertarget{Proposition 2}{}
\begin{prop}
Let $\tau,\kappa$ be positive integers and $k_1,k_2,\ldots,k_\kappa$ distinct real numbers. Now consider the function $$L(\lambda) = \sum c_{t,i}\lambda^te^{\lambda k_i}\hspace{30pt}(t=0,1,\ldots,\tau; i = 1,2,\ldots,\kappa)$$ in which not all $c$ vanish. Then this expression only has finitely many real roots as a function of $\lambda$.
\end{prop}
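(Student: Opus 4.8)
The plan is to exploit the fact that $L(\lambda)$ extends to an entire function of the complex variable $\lambda$, so that its real zeros are isolated unless $L \equiv 0$, together with a growth estimate showing $|L(\lambda)| \to \infty$ as $\lambda \to \pm\infty$. First I would regroup the double sum by collecting the powers of $\lambda$ that accompany each exponential, writing $L(\lambda) = \sum_{i=1}^{\kappa} P_i(\lambda)\, e^{\lambda k_i}$, where $P_i(\lambda) = \sum_{t=0}^{\tau} c_{t,i}\lambda^t$ is a polynomial of degree at most $\tau$. The hypothesis that not all $c_{t,i}$ vanish says precisely that at least one $P_i$ is not the zero polynomial; let $S = \{\, i : P_i \not\equiv 0 \,\}$, which is nonempty.

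The heart of the argument is an asymptotic domination estimate. Since the $k_i$ are distinct, there are unique indices realizing $K_+ = \max_{i \in S} k_i$ and $K_- = \min_{i \in S} k_i$. Factoring out $e^{\lambda K_+}$, I would write $e^{-\lambda K_+} L(\lambda) = P_{+}(\lambda) + \sum_{i \in S,\; k_i < K_+} P_i(\lambda)\, e^{\lambda(k_i - K_+)}$, where $P_+$ is the nonzero polynomial attached to $K_+$. As $\lambda \to +\infty$ every exponent $k_i - K_+$ is strictly negative, so each term in the sum tends to $0$ because the decaying exponential outweighs its polynomial factor, while $|P_+(\lambda)|$ is bounded below by a positive constant for large $\lambda$. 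Hence $|e^{-\lambda K_+} L(\lambda)|$ stays bounded away from $0$, so $L(\lambda) \neq 0$ for all sufficiently large $\lambda$. The symmetric computation, factoring out $e^{\lambda K_-}$ and using that $k_i - K_- > 0$ forces $e^{\lambda(k_i - K_-)} \to 0$ as $\lambda \to -\infty$, shows $L(\lambda) \neq 0$ for all sufficiently negative $\lambda$. In particular there is an $M > 0$ with $L(\lambda) \neq 0$ whenever $|\lambda| > M$; note this also establishes $L \not\equiv 0$.

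It then remains to handle the compact interval $[-M, M]$. Since $L$ is entire and not identically zero, an accumulation of zeros inside $[-M, M]$ would, by the identity theorem, force $L$ to vanish identically, a contradiction; hence $L$ has only finitely many zeros in $[-M, M]$. Combined with the absence of zeros outside this interval, $L$ has only finitely many real roots.

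I expect the main obstacle to be making the domination estimate fully rigorous, i.e. justifying that the single exponential with the extreme rate $K_\pm$ controls the entire sum in each direction --- in particular the statement that a decaying exponential $e^{\lambda(k_i - K_+)}$ times a polynomial tends to zero, and that the surviving polynomial $P_\pm$ is eventually bounded away from zero. Everything else (the regrouping, the appeal to the identity theorem, the compactness step) is routine. An alternative route avoiding complex analysis would be an induction on the number $\kappa$ of distinct exponents (or on $\tau$): after dividing through by $e^{\lambda k_1}$ and differentiating, \emph{Rolle's theorem} bounds the increase in the number of real zeros while lowering the complexity of the exponential sum, yielding even an explicit bound on the number of real roots. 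For the stated finiteness, however, the growth-plus-identity-theorem argument above is the most economical.
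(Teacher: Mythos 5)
Your proof is correct and follows essentially the same route as the paper: both arguments rule out a finite accumulation point of zeros via holomorphy and then kill zeros near $\pm\infty$ by a dominant-term estimate after dividing out the leading exponential. The only cosmetic difference is that the paper normalizes by the single monomial $c_{t_1,i_1}\lambda^{t_1}e^{\lambda k_{i_1}}$ (largest $k_i$, then largest $t$) and reduces $\lambda\to-\infty$ to $\lambda\to+\infty$ via $L(-\lambda)$, whereas you factor out only $e^{\lambda K_\pm}$ and bound the surviving polynomial $P_\pm$ away from zero.
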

\begin{proof}
Assuming $L(\lambda)$ has an infinite number of real roots $\lambda_1,\lambda_2,\ldots$, we can always select a subsequence of those roots converging to $+\infty$ or $-\infty$. If not, the $\lambda_i$ have an accumulation point in the finite reals, and since $L(\lambda)$ is holomorphic, it follows that $L(\lambda)$ vanishes identically in $\lambda$. Thus we may assume the numbers $1,2,3,\ldots$ as our $\lambda_1,\lambda_2,\lambda_3,\ldots$. 
We already know that the sequence
$$ \lambda_1, \lambda_2, \lambda_3,...  \hspace{30pt} (\lambda_i \neq 0, i = 1,2,3,...)$$
possesses the property that
$\lim_{i\to\infty}\lambda_i=\infty$.
(The case $\lim_{i\to\infty}\lambda_i=-\infty$ is reducible to this one by considering $L(-\lambda)$ instead of $L(\lambda)$).
Let $c_{t_1,i_1}\lambda^te^{\lambda k_{i_1}}$ be the term from $L(\lambda)$ with $c_{t_1,i_1}\neq0$ having the largest value of $k_i$, and for this $k_i$ also has the largest value of $t$. By considering the equation
$$\frac1{c_{t_1,i_1}}\lambda_i^{-t_1}e^{-\lambda_ik_{i_1}}L(\lambda_i)=1+\Sigma'\frac{c_{t,p}}{c_{t_1,i_1}}\lambda^{t-t_1}_ie^{\lambda_i(k_p-k_{i_1})}=0$$
and taking the limit as $i \to \infty$, we reach a contradiction.
\end{proof}

Now let $\varphi(s) = \sum a_i e^{-\lambda_i s}$ be a Dirichlet series, where we require no assumptions regarding convergence, however we will assume $\lim\limits_{i \to \infty} \lambda_i = +\infty$.
Let $F(f^{(v)}(s+h_\mu))=0$ be an algebraic difference differential equation not explicitly depending on $s$.
Now replace $f(s)$ with $\varphi(s)$ in this equation, calculate the corresponding resultant formally, and rearrange it into a Dirichlet series.
If all the terms of the resulting Dirichlet series vanish, we say that $\varphi(s)$ \textit{formally} satisfies the equation $F(f^{(v)}(s+h_\mu))=0$.\\

In the sequel $\lambda$, as it appears in expressions of the form $e^{-\lambda s}$, will be referred to as its \textit{exponent}.
\hypertarget{Proposition 3}{}
\begin{prop}
Suppose the Dirichlet series 
$$\varphi(s) = \sum_0^\infty a_ie^{-\lambda_is}\hspace{30pt}(\lambda_0 < \lambda_1 < \ldots \text{ ad inf.)}$$ formally satisfies an algebraic difference differential equation \begin{align}
    F(f^{(\nu)}(s+h_\mu))=0
\end{align} not explicitly depending on $s$.  It follows that for a $\lambda_i$ with sufficiently large index $i$ each subsequent $\lambda_i$ can be expressed as an integer linear combination of the previous $\lambda_i$.\footnote{Translator's note:  This is to say that all but finitely many of the $\lambda$ are linearly dependent over the integers.}
\end{prop}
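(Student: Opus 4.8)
The plan is to substitute $\varphi$ into $F$, read off the coefficient of a single carefully chosen exponential, factor it as a nonzero constant times $L(\lambda_k)$ with $L$ an exponential--polynomial of the exact shape treated in Proposition 2, and then let Proposition 2 force all but finitely many of the ``new'' exponents to disappear.

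First I would substitute. Writing $\varphi^{(\nu)}(s+h_\mu)=\sum_i a_i(-\lambda_i)^\nu e^{-\lambda_i h_\mu}e^{-\lambda_i s}$, I note that the coefficient of $e^{-\lambda_i s}$ here is $a_i(-\lambda_i)^\nu e^{-\lambda_i h_\mu}$, i.e.\ exactly a term of the type $\lambda^t e^{\lambda k}$ occurring in Proposition 2. Expanding $F$ of these arguments yields a formal Dirichlet series whose exponents are the nonnegative integer combinations $\sum_j c_j\lambda_j$ with $\sum_j c_j\le \deg F$, and formal satisfaction means every coefficient vanishes. I would normalize $\lambda_0=0$: if $\lambda_0\neq 0$ one replaces $\varphi$ by $e^{\lambda_0 s}\varphi$, which again satisfies an $s$-free algebraic difference differential equation, and the $\mathbb{Z}$-dependence of the shifted exponents $\lambda_i-\lambda_0$ on their predecessors transfers back to the $\lambda_i$ because $\lambda_0$ is itself one of the predecessors. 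With $\lambda_0=0$ the term $a_0$ becomes a genuine constant ``filler'' of exponent $0$.

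Next I would fix a \emph{jump index} $k$ (one where $\lambda_k$ is not an integer combination of $\lambda_0,\dots,\lambda_{k-1}$) with $k$ large, and examine the coefficient of $e^{-\lambda_k s}$. Since the nonzero exponents are strictly positive and $\lambda_k$ is $\mathbb{Z}$-independent of the earlier ones, the only way a product of factors can have total exponent exactly $\lambda_k$ is for one factor to contribute its $\lambda_k$-term while every other factor contributes the exponent-$0$ filler $a_0$; moreover a differentiated factor kills this constant, so the surviving factors are undifferentiated. Collecting these contributions gives the coefficient in the form $a_k\,L(\lambda_k)$, where $L(\lambda)=\sum_{\nu,\mu} g_{\nu\mu}(-\lambda)^\nu e^{-\lambda h_\mu}$ and the $g_{\nu\mu}$ are the first partial derivatives of $F$ at the filler point. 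This $L$ is precisely of the form handled in Proposition 2.

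Formal satisfaction forces $a_k L(\lambda_k)=0$, and since $a_k\neq 0$ we obtain $L(\lambda_k)=0$. Provided $L\not\equiv 0$, Proposition 2 gives only finitely many real zeros of $L$, so only finitely many indices $k$ can be jump indices; equivalently, every $\lambda_i$ of sufficiently large index is an integer combination of its predecessors, which is the assertion. The hard part will be exactly the degenerate case $L\equiv 0$, i.e.\ when all first partials of $F$ vanish at the filler point and the coefficient of $e^{-\lambda_k s}$ carries no information. I expect to handle this by re-centering $F$ at the filler point and inducting on its degree---passing to the lowest nonvanishing homogeneous part and to the smallest output exponent genuinely involving $\lambda_k$---the delicate issue being to rule out interference from later exponents $\lambda_j$ with $j>k$ lying just above $\lambda_k$, which really can occur for logarithmically spaced exponents such as those of the $\zeta$-function. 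Establishing this non-vanishing (equivalently, that $F$ may be taken with a genuine linear part at the filler point) is where the real work lies; everything else is bookkeeping together with Proposition 2.
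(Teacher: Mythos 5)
Your reduction of the combinatorics is sound: after normalizing $\lambda_0=0$, a ``jump'' exponent $\lambda_k$ can only arise in the output series from one factor contributing its $\lambda_k$-term and all others contributing the constant filler, which packages the coefficient of $e^{-\lambda_k s}$ as $a_k L(\lambda_k)$ with $L$ of exactly the shape treated in \hyperlink{Proposition 2}{Proposition 2}. But the proof is not complete, and the gap you flag at the end --- the degenerate case $L\equiv 0$, i.e.\ $F$ having no linear part at the filler point --- is not a loose end to be patched later; it is the entire content of the proposition beyond bookkeeping. A re-centering-and-induction scheme on the lowest homogeneous part does not obviously close it: once the relevant contributions come from a homogeneous part of degree $d\ge 2$, the smallest output exponent genuinely involving $\lambda_k$ is no longer $\lambda_k$ itself but $\lambda_k$ plus a sum of $d-1$ earlier exponents, and (as you yourself note) nothing prevents later exponents $\lambda_j$, $j>k$, from landing in that range, so the clean ``one slot carries $\lambda_k$, the rest carry fillers'' dichotomy breaks down.

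The paper closes this gap with two devices you do not use, and it is worth seeing how they interlock. First, $F$ is taken of \emph{minimal total degree} $n$ among all $s$-free equations formally satisfied by $\varphi$; hence no first partial $F_{\varrho,\sigma}=\partial F/\partial f^{(\varrho)}(s+h_\sigma)$, having smaller degree, can be formally annihilated by $\varphi$. Second, the Taylor expansion is performed not about the constant filler but about the growing partial sum $A_i(s)=\sum_{t<i}a_te^{-\lambda_t s}$, with remainder $R_i$. The linear term of that expansion contributes, at the minimal exponent $\Lambda_1+\lambda_i$, the quantity $a_i\sum_t b_{\varrho_t,\sigma_t}(-\lambda_i)^{\varrho_t}e^{-\lambda_i h_{\sigma_t}}$, where the $b_{\varrho_t,\sigma_t}$ are the \emph{leading coefficients of the full series} $F_{\varrho,\sigma}(\varphi^{(\nu)}(s+h_\mu))$ realizing the minimal exponent $\Lambda_1$ --- nonzero by minimality of $n$, and attached to distinct monomials $\lambda^{\varrho_t}e^{-\lambda h_{\sigma_t}}$, so the resulting $L$ is automatically $\not\equiv 0$ and Proposition 2 applies unconditionally. (The conclusion is then drawn slightly differently from yours: for $\lambda_i$ beyond the largest root of $L$ the term with exponent $\Lambda_1+\lambda_i$ must cancel against $F(A_i^{(\nu)}(s+h_\mu))$, whose exponents are integer combinations of $\lambda_0,\dots,\lambda_{i-1}$, and since $\Lambda_1$ is itself such a combination, so is $\lambda_i$.) In short: replacing your filler point by $A_i(s)$, together with the minimal-degree hypothesis, is precisely what makes the non-degeneracy you are missing come for free. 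As written, your argument proves the proposition only for those $F$ with a nonvanishing linear part at the filler point.
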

\begin{proof}
Let $n$ denote the (total) degree of the polynomial $F$ in the arguments $f^{\nu}(s+h_\mu)$. 
We may assume that $n$ is the smallest possible number such that $\varphi(s)$ fails to satisfy any algebraic difference differential equation of total degree less than $n$.
We proceed to construct the following expressions
$$F_{\varrho,\sigma}(f^{(\nu)}(s+h_\mu)=\frac{\partial F(f^{(\nu)}(s+h_\mu))}{\partial f^{(\varrho)}(s+h_\sigma)}.$$
By considering one particular $F_{\varrho,\sigma}$, we observe that it is a polynomial in $f^{(\nu)}(s+h_\mu)$ of degree no greater than  $n$.
Therefore, by substituting the series $\varphi(s)$ for $f(s)$ in $F_{\varrho,\sigma}$ and formally rewriting the result as a Dirichlet series, it must follow that not all terms of the resulting series vanish.
Let  $b_{\varrho,\sigma}e^{-\lambda_{\varrho,\sigma}s}$ denote the first non-vanishing term of the resulting Dirichlet series. 
We will calculate these initial terms for each pair of numbers $\varrho,\sigma$ under consideration.\\

Since the $\lambda_i$ eventually become positive for all indices $i$ past a certain point, and while constructing the $F_{\varrho,\sigma}(\varphi^{(\nu)}(s+h_\mu))$ the individual terms of the series $\varphi^{(\nu)}(s+h_\mu)$ are multiplied with each other at most $(n-1)$ times, it is clear that only finitely many terms of the series $\varphi^{(\nu)}(s+h_\mu)$ can contribute to the construction of the terms 
$b_{\varrho,\sigma}e^{-\lambda_{\varrho,\sigma}s}$ or preceding terms in $F_{\varrho,\sigma}(\varphi^{(\nu)}(s+h_\mu))$, which by assumption vanish. Thus there exists a positive number $\Lambda$ such that if we truncate the series $\varphi(s)$ at any term with the exponent $\lambda_i>\Lambda$ and substitute the resulting expression consisting of only finitely many terms into $F_{\varrho,\sigma}(f^{(\nu)}(s+h_\mu))$ for $f(s)$, the result will again have initial terms $b_{\varrho,\sigma}e^{-\lambda_{\varrho,\sigma}s}$.\\

Denote the smallest of the numbers $\lambda_{\varrho,\sigma}$ by $\Lambda_1$, and let
$\lambda_{\varrho_1,\sigma_1}=\lambda_{\varrho_2,\sigma_2}=\ldots=\lambda_{\varrho_m,\sigma_m}=\Lambda_1$, while for the remaining numbers we assume $\lambda_{\varrho,\sigma}>\Lambda_1$.

Next, for $\lambda_i>\Lambda$, we set 
$$\varphi(s) = A_i(s)+R_i(s),\hspace{30pt}A_i(s) = \sum_0^{i-1}{a_te^{-\lambda_ts}},\hspace{30pt}R_i(s) = \sum_i^\infty{a_te^{-\lambda_ts}}.$$
If we plug the expression $\Phi(s) = A_i(s)+R_i(s)$ into $F(f^{(\nu)}(s+h_\mu))=0$ in the place of $f(s)$ and expand using Taylor's theorem, we obtain
\begin{align}
    F(\varphi^{(\nu)}(s+h_\mu))=F(A_i^{(\nu)}(s+h_\mu))+\sum F_{\varrho,\sigma}(A_i^{(\nu)}(s+h_\mu))\frac{d^{\varrho}R_i(s+h_\sigma)}{ds^\varrho}
 \end{align}
$$+\sum_2\Psi_2(R_i^{(\nu)}(s+h_\mu))F_2(A_i^{(\nu)}(s+h_\mu))+\ldots$$

Here $F_2,F_3,\ldots$ are the second, third, etc. derivatives of $F$ in its arguments, while the $\Psi_2,\Psi_3,\ldots$ are given \textit{homogeneous} forms of the second, third, etc. degree in $R_i^{(\nu)}(s+h_\mu)$. Thus, the exponents of the initial terms of $\Psi_2(R^{(\nu)}(s+h_\mu)),\Psi_3(R^{(\nu)}(s+h_\mu)),\ldots$ are at least $2\lambda_i,3\lambda_i,\ldots$, etc. On the other hand, the total degrees of the polynomials $F_2,F_3,\ldots$ are at most $n-2,n-3,\ldots$, so the lower bounds $2\lambda_i - (n-2)|\lambda_0|,3\lambda_i - (n-3)|\lambda_0|,\ldots$ for the exponents of the initial terms of the third, fourth, etc. expression in (3).

Next, by considering the second expression in the right-hand side of (3), we get the following expression for its initial term \begin{align}
    a_i e^{-(\Lambda_1+\lambda_i)s}
\sum{b_{\varrho_t,\sigma_t}(-\lambda_i)^{\varrho_t}e^{-\lambda_ib_{\sigma_t}}},
\end{align} provided the sum 
$$\sum{b_{\varrho_t,\sigma_t}(-\lambda)^{\varrho_t}e^{-\lambda b_{\sigma_t}}}$$ remains non-zero for $\lambda = \lambda_i$. According to \hyperlink{Proposition 2}{Proposition 2}, however, this sum only has finitely many real roots when regarded as a function of $\lambda$. By labeling its greatest root $\Lambda_2$, it follows that for $\lambda_i>\Lambda+|\Lambda_1|+|\Lambda_2|+n|\lambda_0|$
the term (4) will not cancel with any term of second, third, etc. expression from the right side of (3). 
But since, by assumption, every component on the right side of (3) vanishes, a term with the exponent $\Lambda_1+\lambda_i$ can be found among those in $F(A_i^{(\nu)}(s+h_\mu))$.
Therefore the exponent $\Lambda_1+\lambda_i$ can be written as an integer linear combination of the exponents $\lambda_0,\lambda_1,\ldots,\lambda_{i-1}$. On the other hand, $\Lambda_1$ can also be decomposed as a linear combination of $\lambda_0,\lambda_1,\ldots,\lambda_{i-1}$, given that $\lambda_i > \Lambda_1 + n|\lambda_0|.$ It follows that $\lambda_i$  is an integer-linear function of the $\lambda_0,\lambda_1,\ldots,\lambda_{i-1}$.
\end{proof}
Assuming the conditions of \hyperlink{Proposition 3}{Proposition 3} are fulfilled, all $\lambda_i$ admit representations as integer linear combinations of the exponents $\lambda$ smaller than $\Lambda+|\Lambda_1|+|\Lambda_2|+n|\lambda_0|+1$.
We will thus say that the system of exponents $\lambda_i$ belonging to $\varphi(s)$ \textit{possesses a finite linear basis\footnote{Translator's note: Unless otherwise specified, linear independence is assumed to be over the integers.} from among the numbers $\lambda_i$ themselves.}
Let $\lambda_j$ be the largest $\lambda$ such that $\lambda_j <\Lambda+|\Lambda_1|+|\Lambda_2|+n|\lambda_0|+1$. It bears mentioning that evidently $\lambda_0,\lambda_1,\ldots,\lambda_j$ need not be linearly independent.
Say the number of these $\lambda$ that are linearly independent is $\alpha$. Then it is well-known that we can construct $\alpha$ many integer linear transformations $\omega_1,\omega_2,\ldots,\omega_\alpha$ from $\lambda_0,\lambda_1,\ldots,\lambda_j$ so that, conversely, $\lambda_0,\lambda_1,\ldots,\lambda_j$ and therefore all $\lambda_i$ can be expressed as integer linear combinations of $\omega_1,\omega_2,\ldots,\omega_\alpha$.
Taking these expressions of the $\lambda_i$ and plugging them into the series $\varphi(s)$, we can also rewrite $\varphi(s)$ as
$$\Phi(e^{-\omega_1s},e^{-\omega_2s},\ldots,e^{-\omega_\alpha s}),$$
where $\Phi(x_1,x_2,\ldots,x_u)$ is a power series expanding along both positive and negative integral powers of $x_1,x_2,\ldots,x_u$, although it can only be considered as formal for the time being as we currently still assume nothing regarding the convergence of $\varphi(s)$. - In doing so, we may assume all $\omega_i$ to be positive.\\

From \hyperlink{Proposition 3}{Proposition 3} it follows that there cannot be an infinite number of linearly independent exponents $\lambda_i$ if $\varphi(s)$ formally satisfies equation (2).
To truly appreciate the full scope of this result, we will consider Dirichlet series of type \begin{align}
    \sum \frac{a_n}{n^s}=\sum a_ne^{-s \text{log}(n)}
\end{align}\\

By the uniqueness of the prime factorization of the natural numbers, it follows that the system of logarithms of all prime numbers is linearly independent.
If, among all the numbers $\text{log}(n)$ that actually occur in (5), there are only finitely many that are linearly independent, then, starting from a certain $n$, the numbers $\text{log}(n)$ appearing in (5) are linearly dependent on a certain finite system of logarithms of natural numbers $\text{log}(n)$, say $\text{log}(n_1),\text{log}(n_2),\ldots,\text{log}(n_j)$
But then it follows that all denominators $n$ that actually show up in (5) are only made up of the same prime factors that occur in $n_1,n_2,\ldots,n_j$.
Thus, if the indices $n$ corresponding to non-zero $a_n$ in (5) are divisible by an infinite number of different prime numbers, then (5) surely cannot formally satisfy any equation (2).
However, if the denominators actually occurring in (5) only contain finitely many prime factors $p_1,p_2,\ldots,p_\beta$, we can formally write (5) in the form
$$\Psi(p_1^{-s},p_2^{-s},\ldots,p_\beta^{-s})$$
where $\Psi(x_1,x_2,\ldots,x_\beta)$ is a power series containing only positive powers of $x_1,x_2,\ldots,x_\beta$.
However, if (5) possesses a domain of convergence (hence also a domain of absolute convergence), then $\Psi(x_1,x_2,\ldots,x_\beta)$ also has a domain of absolute convergence in which it represents an analytic function of $x_1,x_2,\ldots,x_\beta$.
\hypertarget{Theorem 1}{}
\begin{theorem}
If a Dirichlet series 
$$\varphi(s)=\sum a_i e^{-\lambda_i s}$$
has a domain $\Omega$ of absolute convergence and if the analytic function it represents in $\Omega$ satisfies an algebraic difference differential equation, then the system of numbers $\lambda_i$ possesses a finite linear basis from among the numbers $\lambda_i$.
\end{theorem}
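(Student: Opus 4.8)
The plan is to deduce Theorem 1 from \hyperlink{Proposition 3}{Proposition 3} by upgrading the hypothesis of \emph{analytic} satisfaction to the hypothesis of \emph{formal} satisfaction required there. First I would normalize the equation. Since $\varphi(s)$ converges absolutely on $\Omega$ it represents an analytic function there, so \hyperlink{Proposition 1}{Proposition 1} applies and I may replace the given difference differential equation by one of the form $F(f^{(\nu)}(s+h_\mu)) = 0$ not depending explicitly on $s$, which $\varphi$ still satisfies as an analytic identity on $\Omega$. I would also discard the trivial case in which $\varphi$ has only finitely many terms (then the $\lambda_i$ obviously have a finite basis among themselves), and arrange the exponents in ascending order $\lambda_0 < \lambda_1 < \cdots$ with $\lambda_i \to +\infty$, as absolute convergence of an infinite Dirichlet series forces.

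The crux is the passage from analytic to formal satisfaction, and this is where the convergence hypothesis and the uniqueness theorem for Dirichlet series do their work. I would argue that every operation appearing in $F(\varphi^{(\nu)}(s+h_\mu))$ preserves absolute convergence on a common half-plane: the real shift $s \mapsto s + h_\sigma$ only translates the abscissa of absolute convergence, term-by-term differentiation introduces the polynomial factors $(-\lambda_i)^\varrho$ without changing that abscissa, and the Cauchy products coming from the monomials of $F$ (each of degree at most $n$) are again absolutely convergent. Hence, on some half-plane contained in all the relevant translates of $\Omega$, the expression $F(\varphi^{(\nu)}(s+h_\mu))$ is represented by an absolutely convergent series which may be freely rearranged and whose like exponents may be collected. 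The collected exponents are sums of at most $n$ of the original $\lambda_i$, so, since $\lambda_i \to +\infty$, only finitely many of them lie below any bound; the collected series is therefore a genuine Dirichlet series with exponents tending to $+\infty$, and it converges to the same value as the function it came from. But that function is identically zero on $\Omega$, because $\varphi$ satisfies the equation; in particular it vanishes for all sufficiently large real $s$. By the uniqueness theorem quoted in the Introduction, every coefficient of the collected series vanishes---which is precisely the statement that $\varphi(s)$ \emph{formally} satisfies $F(f^{(\nu)}(s+h_\mu)) = 0$.

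With formal satisfaction established, \hyperlink{Proposition 3}{Proposition 3} applies verbatim and yields that, from some index onward, each $\lambda_i$ is an integer linear combination of the earlier exponents; equivalently, the system $\{\lambda_i\}$ possesses a finite linear basis drawn from among the $\lambda_i$ themselves, which is the assertion of the theorem. I expect the main obstacle to be the second paragraph: one must check carefully that collecting terms with equal exponents is legitimate (guaranteed by absolute convergence) and that the resulting series is a bona fide convergent Dirichlet series to which the uniqueness theorem may be applied, rather than a merely formal rearrangement. Once this bridge between the analytic identity on $\Omega$ and the formal identity of Dirichlet coefficients is in place, the remainder is a direct appeal to the already-proved \hyperlink{Proposition 3}{Proposition 3}.
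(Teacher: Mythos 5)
Your proposal is correct and follows essentially the same route as the paper: reduce via \hyperlink{Proposition 1}{Proposition 1} to an equation not explicitly containing $s$, use absolute convergence of the shifted, differentiated, and multiplied Dirichlet series together with the uniqueness theorem to upgrade analytic satisfaction to formal satisfaction, and then invoke \hyperlink{Proposition 3}{Proposition 3}. Your second paragraph simply spells out in more detail the convergence checks that the paper's proof compresses into a single sentence.
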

\begin{proof}
If the analytic function $\varphi(s)$ in $\Omega$ satisfies an algebraic difference differential equation, then, according to \hyperlink{Proposition 1}{Proposition 1}, it also satisfies an equation of the form (2) as in \hyperlink{Proposition 3}{Proposition 3}.
If we substitute $\varphi(s)$ into said equation, and consider that additionally the series $\varphi^{(\nu)}(s+h_\mu)$ converge absolutely in a certain domain and represent the analytic function $\varphi^{(\nu)}(s+h_\mu)$ there, since the formally defined product of finitely many absolutely convergent Dirichlet series also has a domain of absolute convergence and is represented by the product of the corresponding functions on it, we may formally conduct our calculations with the series $\varphi(s)$.
The resulting series $D(s)$ has an absolute domain of convergence and in it represents the result of substituting $\varphi(s)$ into the left hand side of the difference differential equation.
But since this result vanishes identically, we have due to the uniqueness theorem that the series $D(s)$ also vanishes identically.
Therefore  $\varphi(s)$ \textit{formally} satisfies an algebraic difference differential equation of the type as in equation (2) in \hyperlink{Proposition 3}{Proposition 3} and finally the application of Proposition 3 completes the proof of this theorem.
\end{proof}
\newpage

\section{Proof that the series $x+\frac{x^2}{2^s}+\frac{x^3}{3^s}$ and analogously constructed series have the property of not satisfying any algebraic partial differential equation.}
We can now provide the proof of the proposition that Hilbert designated as ``probable''in his Paris lecture on ``Mathematical Problems:''
\hypertarget{Theorem 2}{}
\begin{theorem}
As a function of the variables $x, s$, the function 
$$\zeta(x,s) = x+\frac{x^2}{2^s}+\frac{x^3}{3^s}+\ldots$$
fails to satisfy any algebraic partial differential equation.
\end{theorem}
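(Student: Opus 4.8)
The plan is to reduce the two–variable problem to the one–variable Dirichlet-series situation already settled by \hyperlink{Theorem 1}{Theorem 1}, using Hilbert's functional equation to trade differentiation in $x$ for translation in $s$. Suppose, for contradiction, that $\zeta(x,s)$ satisfies an algebraic partial differential equation on its domain $|x|\le 1$, $\mathrm{Re}(s)>1$. By the partial–differential analogue of \hyperlink{Proposition 1}{Proposition 1} recorded immediately after its proof, I may assume $\zeta$ satisfies an equation $F=0$ whose polynomial $F$ has constant coefficients and does not depend explicitly on $x$ or $s$; that is,
$$F\big(\ldots,\ \tfrac{\partial^{k+l}\zeta}{\partial x^{k}\,\partial s^{l}},\ \ldots\big)=0$$
identically on the domain.

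Next I would introduce the Euler operator $\vartheta=x\,\partial/\partial x$. Termwise differentiation of $\zeta(x,s)=\sum_{n\ge 1}x^{n}n^{-s}$ gives $\vartheta\,\zeta(x,s)=\zeta(x,s-1)$, which is exactly Hilbert's functional equation, and iterating yields $\vartheta^{\,j}\zeta(x,s)=\zeta(x,s-j)$. Since $x^{k}\partial_x^{k}=\vartheta(\vartheta-1)\cdots(\vartheta-k+1)$, every pure $x$-derivative can be rewritten as $\partial_x^{k}\zeta=x^{-k}\sum_{j=0}^{k}s(k,j)\,\zeta(x,s-j)$ with $s(k,j)$ the Stirling numbers of the first kind; because $\partial_s$ commutes with both $x^{-k}$ and the $s$-shift, the same substitution turns each mixed partial $\partial_x^{k}\partial_s^{l}\zeta$ into $x^{-k}$ times a $\mathbb Z$-linear combination of the $\partial_s^{l}\zeta(x,s-j)$. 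Performing these substitutions in $F$ and multiplying through by a high enough power of $x$ to clear the negative powers produces a polynomial identity
$$\tilde F\big(x,\ \ldots,\ \partial_s^{l}\zeta(x,s-j),\ \ldots\big)=0,$$
valid for all $(x,s)$ in the domain, where $\tilde F$ is a polynomial in the displayed arguments whose coefficients are polynomials in $x$ alone.

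I would then fix a constant $x_0$ with $0<|x_0|<1$ and set $g(s)=\zeta(x_0,s)=\sum_{n\ge 1}x_0^{\,n}e^{-s\log n}$, a Dirichlet series with exponents $\log n$, all of whose coefficients are nonzero and which converges absolutely for every $s$. Specializing the identity above to $x=x_0$ exhibits $g$ as a solution of the algebraic difference differential equation $\tilde F(x_0,\cdot)=0$, whose shifts $h_\mu=-j$ are real and whose coefficients are constants. Invoking \hyperlink{Theorem 1}{Theorem 1}, the exponents $\log n$ must then admit a finite linear basis from among themselves. But the indices $n$ with nonzero coefficient are \emph{all} of $1,2,3,\dots$, hence divisible by infinitely many primes, and the $\log p$ are $\mathbb Z$-linearly independent; as explained in the remarks following \hyperlink{Theorem 1}{Theorem 1}, this makes a finite linear basis impossible. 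This contradiction finishes the proof.

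The step I expect to be the main obstacle is guaranteeing that the specialized equation $\tilde F(x_0,\cdot)=0$ is genuinely nontrivial, i.e. that $\tilde F(x_0,\cdot)$ is not the zero polynomial in its formal arguments. The key observation is that, for each fixed $l$, the correspondence $\partial_x^{k}\partial_s^{l}\zeta\leftrightarrow \partial_s^{l}\zeta(x,s-j)$ is invertible, the inverse being $\zeta(x,s-j)=\vartheta^{j}\zeta=\sum_{k}S(j,k)\,x^{k}\partial_x^{k}\zeta$ with $S(j,k)$ the Stirling numbers of the second kind. Hence a nonzero $F$ yields a $\tilde F$ that is nonzero as a polynomial in the shifted arguments over the field of rational functions in $x$, so its finitely many coefficient-polynomials in $x$ are not all identically zero. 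Choosing $x_0$ outside the finite union of the zero sets of these coefficients keeps $\tilde F(x_0,\cdot)$ nonzero, and since this excludes only finitely many points there is ample room to take $0<|x_0|<1$ so that the absolute-convergence hypothesis of \hyperlink{Theorem 1}{Theorem 1} is automatically met.
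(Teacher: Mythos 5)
Your proposal is correct and follows essentially the same route as the paper: reduce to a constant-coefficient equation, use Hilbert's functional equation (via the Euler operator) to convert $x$-derivatives into $s$-shifts with coefficients rational in $x$, check nontriviality by invertibility of the substitution, specialize $x$ to a constant, and invoke \hyperlink{Theorem 1}{Theorem 1} together with the prime-factorization remark. The only divergence is cosmetic: the paper clears the maximal power of $(x-1)$ and sets $x=1$ (landing on the Riemann $\zeta$-function), whereas you pick a generic $x_0$ with $0<|x_0|<1$ avoiding the zeros of the coefficient polynomials — both devices secure the same nonvanishing of the specialized equation.
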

\begin{proof}
One can immediately verify that the function $\zeta(w,s)$ satisfies the following functional equation provided by Hilbert:
\begin{align}x\frac{\partial \zeta(x,s)}{\partial x}=\zeta(x,s-1).\end{align}
From which one can instantly derive the more general functional equations
\begin{align}(x\frac{\partial}{\partial x})^\mu\zeta(x,s-\nu)=\zeta(x,s-\mu-\nu)\end{align}
and
\begin{align}(x\frac{\partial}{\partial x})^\mu(\frac{\partial}{\partial s})^\lambda\zeta(x,s-\nu)=\frac{\partial^\lambda\zeta}{\partial s^\lambda}(x,s-\mu-\nu).\end{align}
Next, let $\zeta(x,s)$ satisfy an algebraic partial differential equation
$$\Phi(\zeta_{\mu,\lambda}(x,s))=0,$$
where $\zeta_{\mu,\lambda}(x,s)$ denotes $\frac{\partial^\mu}{\partial x^\mu}\frac{\partial^\lambda}{\partial s^\lambda}\zeta_{\mu,\lambda}(x,s)$, and $\Phi$ is a polynomial in its arguments, which by \hyperlink{Proposition 2}{Proposition 2} we can assume does not explicitly depend on $x,s$.\\

The functional equation (6) now allows the derivatives of $\zeta(x,s)$ to iteratively be expressed by the quantities $\zeta(x,s-\nu)$: 
\begin{align}
    \frac{\partial^\mu\zeta(x,s)}{\partial x^\mu}=\frac1{x^\mu}\zeta(x,s-\mu)+c_{\mu,\mu-1}\zeta(x,s-\mu+1)+\ldots+c_{\mu,1}\zeta(x,s-1)
\end{align}
Here, the coefficients $c$ are rational functions of $x$. Clearly, we may solve for the quantities $f(x,s-\nu)$ in the equations (9): \begin{align}
    \zeta(x,s-\mu)=x^\mu\frac{\partial^\mu\zeta(x,s)}{\partial x^\mu}+\gamma_{\mu,\mu-1}\frac{\partial^{\mu-1}\zeta(x,s)}{\partial x^{\mu-1}}+\ldots+\gamma_{\mu,1}\frac{\partial\zeta(x,s)}{\partial x}
\end{align}
and the $\gamma$ are rational (even entire) functions of x. By differentiating equations (9) and (10) $\lambda$ times with respect to $s$: we obtain: \begin{align}
    \frac{\partial^{\mu+\lambda}\zeta(x,s)}{\partial x^\mu\partial s^\lambda}=\frac1{x^\mu}\frac{\partial^\lambda\zeta(x,s-\mu)}{\partial s^\lambda}+c_{\mu,\mu-1}\frac{\partial^\lambda\zeta(x,s-\mu+1)}{\partial s^\lambda}+\ldots
\end{align}
$$+c_{\mu,1}\frac{\partial^\lambda\zeta(x,s-1)}{\partial s^\lambda},$$
\begin{align}
\frac{\partial^\lambda\zeta(x,s-\mu)}{\partial s^\lambda}=x^\mu\frac{\partial^{\mu+\lambda}\zeta(x,s)}{\partial x^\mu \partial s^\lambda}+\gamma_{\mu,\mu-1}\frac{\partial^{\mu+\lambda-1}\zeta(x,s)}{\partial x^{\mu-1}\partial s^\lambda}+\ldots    
\end{align}
$$+\gamma_{\mu,1}\frac{\partial^{\lambda+1}\zeta(x,s)}{\partial x \partial s^\lambda},$$
which follows from the observation that $c$ and $\gamma$ are independent from $s$.
Here, too, the corresponding systems of equations (11) and (12) provide solutions from each other for every value of $\lambda$.
Next, if one substitutes the expressions in (11) into the polynomial $\Phi(\zeta_{\mu,\lambda)}(x,s))$
instead of the partial derivatives $\frac{\partial^{\mu + \lambda} \zeta(x,s)}{\partial x^\mu \partial s^\lambda}$, we create the polynomial
$$\Psi(\frac{\partial^\varrho\zeta(x,s-\nu)}{\partial s^\varrho})$$
in terms of the expressions $\frac{\partial^\varrho\zeta(x,s-\nu)}{\partial s^\varrho}$, whose coefficients are rational functions of $x$.
If we substitute the arguments of the polynomial $\Psi$ for the expresssions in (12), we get $\Psi$ again. 
Therefore, $\Psi$ cannot vanish identically.\\

Next, we multiply the polynomial $\Psi$ by a power of $x$, one sufficiently large such that $\Psi$ is also entire with respect to $x$, and also divide it by the largest possible power of $(x-1)$ in the case where the resulting polynomial is divisible by $(x-1)$.
Then, setting $x=1$, for sufficiently large s, $\zeta(x,s)$ gives rise to the Dirichlet series
$$\zeta(s) = 1+\frac{1}{2^s}+\frac{1}{3^s}+\frac1{4^s}+\ldots,$$
the expressions $\frac{\partial^\varrho\zeta(x,s-\nu)}{\partial s^\varrho}$ turn into the expressions $\zeta^{(\varrho)}(s-\nu)$, the equation
$$\Psi(\frac{\partial^\varrho\zeta(x,s-\nu)}{\partial s^\varrho})=0$$
subsequently gives us an algebraic difference differential equation for $\zeta(s)$.
According to \hyperlink{Theorem 1}{Theorem 1} and the remarks we have appended to the proof of \hyperlink{Proposition 3}{Proposition 3}, $\zeta(s)$ does not satisfy any algebraic difference differential equation. This proves Theorem 2.
\end{proof}
In proving Theorem 2 we made use of the functional equation (6) satisfied by $\zeta(x,s)$.  However, by directly using the same method that led to the proof of Proposition 3, we are capable of obtaining a much more general result.
\\
Let
$$F(x,s)=\sum\varphi_i(x)e^{-\lambda_is}\hspace{30pt}(\lambda_0<\lambda_1<\ldots \text{ ad inf.})$$
be an absolutely and uniformly convergent series in a region of the $x$-plane and a given $s$-half-plane (we assume the same convergence properties for all partial derivatives with respect to $x$), and let $\varphi_i(x)$ be non-identically vanishing polynomials in $x$ whose exact degrees will be denoted by $m_i$.
Let $F(x,s)$ satisfy an algebraic partial differential equation of the form $\Phi(F_{\mu,\nu}(x,s))=0,$
where the left hand side can be assumed to be a polynomial (independent from $x$ or $s$) in the partial derivatives $F_{\mu,\nu}(x,s)=\frac{\partial^{\mu+\nu}F}{\partial x^\mu\partial s^\nu}$.
Denoting the partial derivatives of $\Phi$ with respect to $F_{\mu,\nu}$ by $\Phi_{\mu,\nu}$, we may assume that $F(x,s)$ does not satisfy a partial differential equation of the form
$$\Phi_{\mu,\nu}(F_{\sigma,\varrho})=0.$$
By substituting the series expansion for $F(x,s)$ into $\Phi_{\mu,\nu}(F_{\sigma,\varrho})$ and arranging the result formally like an ordinary Dirichlet series, one obtains that the resulting series expansion 
\begin{align}
    \sum \psi_i(x)e^{-\kappa_is}
\end{align}
does not vanish identically.
This is due to the series expansion for $F(x,s)$ and all of its partial derivatives converging absolutely in a certain region, implying the series (13) also converges absolutely in a certain region and represents the analytic function $\Phi_{\mu,\nu}(F_{\sigma,\varrho})$ in said region.
Let $ \psi_1 e ^ {- \kappa_1 s} $ be the first term in (13) with $ \psi_i $ not vanishing identically.
By truncating the series $F(x,s)$ at a sufficiently large $i$, this first term will remain the same.

For each of the pairs of values $\mu,\nu$ we consider, we will find each of the aforementioned first terms and from among those choose the smallest exponent $\chi_1$ occurring in them.  Designate this exponent using $\Lambda$ and let it occur in $\Phi_{\mu_\tau,\nu_\tau} \:\:(\tau = 1,2,...)$.  We will designate the corresponding polynomials $\psi_1(x)$ using $\psi^{(\tau)}(x)$, while the precise degrees will be denoted using $m^{(\tau)}$.\\
We now set 
$$F(x,s) = A_i(x,s) + R_i(x,s),$$
$$A_i(x,s)=\sum_0^{i-1}\varphi_t(x)e^{-\lambda_ts},\hspace{30pt}R_i(x,s)=\sum_i^\infty\varphi_t(x)e^{-\lambda_ts},$$
and plug $A_i+R_i$ into $\Phi$.  We subsequently obtain: \begin{align}
    \Phi(F_{\sigma,\varrho})=\Phi(\frac{\partial^{\sigma+\varrho}A_i}{\partial x^\sigma \partial s^\varrho})+\sum_{\mu,\nu}\Phi_{\mu,\nu}(\frac{\partial^{\sigma+\varrho}A_i}{\partial x^\sigma \partial s^\varrho})\frac{\partial^{\mu+\nu}R_i}{\partial x^\mu \partial s^\nu}
\end{align}
$$+\sum_2\Phi_2(\frac{\partial^{\sigma+\varrho}A_i}{\partial x^\sigma \partial s^\varrho})
\Psi_2(\frac{\partial^{\mu+\nu}R_i}{\partial x^\mu \partial s^\nu})+\ldots$$
Here $\Phi_2,\Phi_3,\ldots$ are the second, third, $\ldots$ derivatives of $\Phi$ with respect to its arguments and $\Psi_2,\Psi_3,\ldots$ are certain homogeneous forms of the second, third, $\ldots$ degree in the derivatives of $R_i$, respectively.\\

\textit{We now assume that} $\lim_{i\to\infty}m_i=\infty$, and under this assumption select the terms from the series expansions in the second, third, etc. expression in (14) whose exponents are as small as possible. This term, for sufficiently large $i$, satisfies the property 
\begin{align}
    \sum_\tau \psi^{(\tau)}(x)\frac{\partial^{\mu_\tau}\varphi_i(x)}{\partial x^{\mu_\tau}}(-\lambda_i)^{\nu_\tau}e^{-(\lambda_i+\Lambda)s},
\end{align}
provided it does not vanish identically in $x$. 
This is true since on the one hand, for other values of $ \mu, \nu $, the exponent of the initial term of $ \Phi _ {\mu, \nu} (\frac {\partial ^ {\sigma + \varrho} A_i} {\partial x ^ \sigma \partial s ^ \varrho})$ is greater than $ \Lambda $, on the other hand the exponent of the initial term of $ \Psi_2, \Psi_3, \ldots $ is at least $ 2 \lambda_i $ for sufficiently large $i$.  However,
since now we have that $ \Phi (F _ {\sigma, \varrho}) $ vanishes identically, it follows that for sufficiently large $ i $ the expression (15) equals 0, or is canceled out by some term in $ \Phi (\frac {\partial ^ {\sigma + \varrho} A_i} {\partial x ^ \sigma \partial s ^ \varrho})$.
The second scenario will surely \textit{not} occur infinitely many times, provided: 1) the exponents $\lambda_i$ have no finite linear basis, or 2) $\lim_{i\to\infty}\text{sup}\frac{\lambda_i}{\lambda_i-1}=\infty$.
Given either of these assumptions, we have that\begin{align}
    \sum_\tau\psi^{(\tau)}(x)\frac{\partial^{\mu_\tau}\varphi_i(x)}{\partial x^{\mu_\tau}}(-\lambda_i)^{\nu_\tau}
\end{align}
must vanish for infinitely many $i$. 
By taking the coefficient of the highest power of $x$ in (16) and setting it equal to 0, we obtain an equation of the form \begin{align}
    \Theta(m_i,\lambda_i)=0,
\end{align}
where $\Theta$ is a polynomial in $m_i,\lambda_i$ with coefficients that are constant, independent of $i$, and not all vanishing.\\

This last equation, however, cannot hold for infinitely many $i$, provided either $m_i$ grows faster than any power of $\lambda_i$, or $\lambda_i$ grows faster than any power of $m_i$, i.e. if 
$$\text{either}\lim_{i\to\infty}\frac{log(m_i)}{log(\lambda_i)}=\infty\text{ or }\lim_{i\to\infty}\frac{log(\lambda_i)}{log(m_i)}=\infty$$
hold.
In order to obtain a more precise result, we consider the roots $m_i$ of (17), which become infinite with i, and expand them in terms of powers of $\lambda_i$ in a neighborhood of the infinitely distant point.
We thus obtain finitely many series expansions in decreasing fractional powers of $\lambda_i$, which commence with a positive power of $\lambda_i$.
From this it follows that among the accumulation points of $\frac{log(m_i)}{log(\lambda_i)}$ there must be a finite number of non-zero rational numbers. We summarize this result in
\hypertarget{Theorem 3}{}
\begin{theorem}
Let 
$$F(x,s)=\sum\varphi_i(x)e^{-\lambda_is}\hspace{30pt}(\lambda_0<\lambda_1<\ldots \text{ ad inf. })$$
be an absolutely and uniformly convergent series in a given region of the $x$-plane and a given $s$-half-plane (with the same convergence properties holding for all partial derivatives with respect to $x$), and let $\varphi_i(x)$ be polynomials in $x$ with exact degrees $m_i$, with $m_i$ unbounded for increasing $i$.
Suppose there are no non-zero rational numbers occurring among the accumulation points of $\frac{log(m_i)}{log(\lambda_i)}$ (this is certainly the case if $\lim_{i\to\infty}\frac{log(m_i)}{log(\lambda_i)}$ equals 0 or $\infty$).
Provided one of the two following conditions is met:
\begin{enumerate}
    \item The exponents $\lambda_i$ have no finite linear basis,
    \item $\lim_{i\to\infty}\text{sup}\frac{\lambda_i}{\lambda_{i-1}}=\infty,$
\end{enumerate}
Then $F(x,s)$ does not satisfy any algebraic partial differential equations.
\end{theorem}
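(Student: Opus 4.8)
The plan is to assemble the term-by-term analysis already carried out in the discussion preceding the statement into a clean contradiction argument, in direct parallel with the proof of \hyperlink{Proposition 3}{Proposition 3}. First I would suppose, toward a contradiction, that $F(x,s)$ satisfies some algebraic partial differential equation $\Phi(F_{\mu,\nu}(x,s))=0$. By the partial-differential-equation version of \hyperlink{Proposition 1}{Proposition 1} (the remark following its proof), I may take $\Phi$ to depend on neither $x$ nor $s$ explicitly, and I would additionally choose $\Phi$ of \emph{minimal} total degree in its arguments among all equations $F$ satisfies. Minimality is what forces $F$ to fail every lower-degree equation $\Phi_{\mu,\nu}(F_{\sigma,\varrho})=0$; hence substituting the series into each first partial $\Phi_{\mu,\nu}$ and regrouping formally yields the Dirichlet-type series (13), $\sum\psi_i(x)e^{-\kappa_i s}$, which does \emph{not} vanish identically. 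The absolute and uniform convergence of $F$ and of all its $x$-derivatives, combined with the uniqueness theorem, is exactly what legitimizes treating these substitutions as formal identities of Dirichlet series.

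The core of the proof is the Taylor expansion (14) of $\Phi(A_i+R_i)$ around the truncation $A_i=\sum_0^{i-1}\varphi_t(x)e^{-\lambda_t s}$. Within the linear-in-$R_i$ block I would extract the term of smallest exponent; this is expression (15), which carries the factor $e^{-(\lambda_i+\Lambda)s}$, where $\Lambda=\chi_1$ is the least exponent among the leading terms of the various $\Phi_{\mu,\nu}$. Two exponent bounds confirm that (15) really is the minimum: for the other index pairs the leading exponent of $\Phi_{\mu,\nu}(\partial^{\sigma+\varrho}A_i/\partial x^\sigma\partial s^\varrho)$ exceeds $\Lambda$, while the homogeneous forms $\Psi_2,\Psi_3,\ldots$ begin only at exponents of order $2\lambda_i,3\lambda_i,\ldots$. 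Because $\Phi(F_{\sigma,\varrho})\equiv 0$, for each large $i$ the expression (15) must either vanish identically in $x$ or be cancelled by a matching term of $\Phi(\partial^{\sigma+\varrho}A_i/\partial x^\sigma\partial s^\varrho)$. Here enters the dichotomy supplied by the hypotheses: under (1), that the $\lambda_i$ have no finite linear basis, or under (2), that $\limsup_{i\to\infty}\lambda_i/\lambda_{i-1}=\infty$, the cancellation alternative can hold for only finitely many $i$, since otherwise $\lambda_i+\Lambda$ would infinitely often be an integer combination of $\lambda_0,\ldots,\lambda_{i-1}$, which these conditions preclude exactly as in \hyperlink{Proposition 3}{Proposition 3}. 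Consequently the $x$-polynomial (16) vanishes for infinitely many $i$.

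It then remains to turn the vanishing of (16) into a numerical impossibility. Using $m_i\to\infty$ to pin down the leading $x$-degree of each $\partial^{\mu_\tau}\varphi_i/\partial x^{\mu_\tau}$, I would read off the coefficient of the highest power of $x$ in (16) and set it to zero; this produces the polynomial relation (17), $\Theta(m_i,\lambda_i)=0$, whose coefficients are fixed, not all zero, and independent of $i$. The closing argument is asymptotic: solving $\Theta=0$ for $m_i$ as an algebraic function of $\lambda_i$ and expanding near the point at infinity gives finitely many Puiseux branches in decreasing fractional powers of $\lambda_i$, each opening with a positive power of $\lambda_i$. Along any index sequence that follows a fixed branch, $\log m_i/\log\lambda_i$ converges to the positive \emph{rational} exponent of that leading power. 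Hence if no nonzero rational number is an accumulation point of $\log m_i/\log\lambda_i$ — in particular whenever this ratio tends to $0$ or to $\infty$ — no branch can be realized infinitely often, contradicting the infinitude of solutions of (17) and establishing the theorem.

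I expect the principal obstacle to lie in the bookkeeping of the Taylor step rather than in any single deep idea: one must verify that the two exponent estimates isolating (15) are uniform in $i$, and, most delicately, that the cancellation alternative is genuinely excluded by (1) or (2) even though the coefficients $\varphi_i(x)$ now carry nontrivial $x$-dependence absent in \hyperlink{Proposition 3}{Proposition 3}. Getting the leading-coefficient extraction in (16)--(17) right, so that $\Theta$ is provably not the zero polynomial, is the other point demanding care.
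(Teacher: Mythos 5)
Your proposal reproduces the paper's own argument step for step: minimality of $\Phi$ forcing the nonvanishing of the series (13), the Taylor decomposition (14) around the truncation $A_i$, isolation of the lowest-exponent term (15) via the two exponent bounds, exclusion of the cancellation alternative under hypotheses (1) or (2), and the passage from the vanishing of (16) to the polynomial relation (17) and its Puiseux expansion at infinity showing that $\log m_i/\log\lambda_i$ would have to accumulate at a nonzero rational. This is essentially the same approach as the paper, and it is correct.
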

What is particularly noteworthy about this theorem is that it only makes assumptions about the degrees of the polynomials $\varphi_i(x)$, but not about their coefficients, apart from the assumptions made regarding convergence. It can evidently be tightened in several directions; in particular, as one can easily prove, the requirement of absolute convergence is insignificant.\\

Since $\lim_{n\to\infty}\frac{log(n)}{log(log(n))}=\infty$, \hyperlink{Theorem 2}{Theorem 2} is contained in Theorem 3.

\newpage

\section{On a class of analytic functions of two variables.}
In this section, we will prove that the functions considered in the previous section cannot be obtained from arbitrary analytic functions of one variable and algebraic functions of several variables via iterated composition. 
The eliminations that show up in this proof (and in subsequent observations within this section) can be carried out in entirely strict and particularly simple and clear fashion using the following lemma, which itself is very easily proven by means of field theoretical observations:
\begin{lemma}\label{lemma:sec3}
Let each of the $ n + 1 $ functions 
$$ f_1 (x_1, \ldots, x_n), f_2 (x_1, \ldots, x_n), \ldots, f_ {n + 1} (x_1, \ldots, x_n) $$
satisfy an algebraic equation whose coefficients are polynomials in $ x_1, \ldots, x_n $ with coefficients from an arbitrary field $K$.
Then the functions $f_1,f_2,\ldots,f_{n+1}$ possess an algebraic relation between one another over the field $K$, i.e. there exists a polynomial $\Phi(y_1,y_2,\ldots,y_{n+1})$ with coefficients from $K$, which are not all zero, such that 
$$\Phi(f_1(x_1,\ldots,x_n),f_2(x_1,\ldots,x_n),\ldots,f_{n+1}(x_1,\ldots,x_n))=0$$
identically in $x_i$.\\
\end{lemma}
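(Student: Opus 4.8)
The plan is to recast the statement entirely in the language of transcendence degree over $K$. First I would fix a single field in which all the objects live---for instance the field $\mathscr{M}$ of meromorphic functions on a connected domain on which the $f_i$ are all defined and single-valued---so that the rational function field $K(x_1,\ldots,x_n)$ sits inside $\mathscr{M}$. The hypothesis that each $f_i$ satisfies an algebraic equation whose coefficients are polynomials in $x_1,\ldots,x_n$ over $K$ says precisely that each $f_i$ is \emph{algebraic} over the subfield $K(x_1,\ldots,x_n)$.

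The key input is the standard pair of facts from field theory that Ostrowski alludes to: the field $K(x_1,\ldots,x_n)$ has transcendence degree exactly $n$ over $K$ (the $x_i$ form a transcendence basis), and the transcendence degree is unchanged upon adjoining an element that is algebraic over the ground field. Applying the latter repeatedly to the tower
$$K(x_1,\ldots,x_n)\subseteq K(x_1,\ldots,x_n,f_1)\subseteq\cdots\subseteq K(x_1,\ldots,x_n,f_1,\ldots,f_{n+1}),$$
in which every step adjoins an algebraic element, I would conclude that the top field $L=K(x_1,\ldots,x_n,f_1,\ldots,f_{n+1})$ still has transcendence degree $n$ over $K$.

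Finally I would invoke the characterization of transcendence degree as the maximal cardinality of an algebraically independent subset: since $L$ has transcendence degree $n$ over $K$, any $n+1$ elements of $L$---in particular $f_1,\ldots,f_{n+1}$---must be algebraically dependent over $K$. By definition this dependence is the existence of a nonzero polynomial $\Phi(y_1,\ldots,y_{n+1})$ with coefficients in $K$ satisfying $\Phi(f_1,\ldots,f_{n+1})=0$ identically, which is exactly the desired relation. (An entirely parallel, more computational route would eliminate the $x_j$ one at a time by taking successive resultants of the defining polynomials, producing $\Phi$ explicitly; the transcendence-degree argument is simply the clean packaging of this.)

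The transcendence-degree count is routine once the setup is in place, so the one point deserving genuine care---and the step I expect to be the main obstacle---is the foundational interpretation: one must exhibit a single field containing $x_1,\ldots,x_n,f_1,\ldots,f_{n+1}$ in which \say{satisfies an algebraic equation} literally means \say{is an algebraic element}, and in which all the $f_i$ coexist. For honestly multivalued algebraic functions this forces either a restriction to a simply connected region where coherent single-valued branches exist, or a passage to an abstract algebraic closure of $K(x_1,\ldots,x_n)$ with each $f_i$ interpreted as a chosen root of its defining polynomial. Once such an ambient field is fixed, the argument above applies verbatim.
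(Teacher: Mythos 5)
Your proof is correct and is essentially the paper's own argument in modern dress: Ostrowski invokes Steinitz's theory of algebraic dependence (an irreducible system algebraically dependent on a finite system of $n$ elements has at most $n$ elements) to conclude that $f_1,\ldots,f_{n+1}$, being algebraically dependent on $x_1,\ldots,x_n$ over $K$, cannot be algebraically independent, which is precisely your transcendence-degree count. The only explicit step he adds that you leave implicit is clearing denominators so that the coefficients of $\Phi$ become honest polynomials rather than rational expressions.
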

We will now introduce the concept of an \textit{improper} function of two variables $x,s$, and in particular, for the purposes of this proof, we shall distinguish improper functions according to differing \textit{ranks}.\\

We shall use the term improper function of \textit{first} rank to denote any analytic function depending on only one variable from among $x,s$.  An improper function of \textit{second} rank $\varphi(x,s)$ denotes any algebraic function of several first rank improper functions, such that they can be defined in the same region of $x,s$. A \textit{third} rank improper function will be defined as an analytic function $\psi(\varphi(x,s))$ of a second rank improper function, while assuming that the domain of $\psi$ contains a portion of the range of $\varphi$, itself corresponding to the shared domains of the first rank functions occuring in $\varphi$. In general, an improper function of $ 2n $-th rank is an algebraic function of several improper functions of $ (2n-1)$-th rank, and an improper function of $(2n + 1)$-th rank is an analytic function of an improper function  of $2n$-th rank. In doing so, however, one must always assume that the corresponding domains and ranges of all functions that occur \textit{fit together}.\\

Any analytic function of two variables that is not an improper function of some finite rank is to be called a \textit{proper function of two variables.}
The improper functions of odd order used in the construction of an improper function, are called its \textit{components}.
Such a component consists of an analytic function of one variable with an improper function of even rank for its argument.
We call this analytic function of one variable the \textit{main function} of the component.
For example, the improper function of fourth rank
$$\varphi(\psi(x)+\varphi(s))+\varphi(x)+\varphi(s)$$
has four components
$$\varphi(\psi(x)+\varphi(s)),\psi(x),\varphi(x),\varphi(s)$$
with the main functions $\varphi(z),\psi(z),\varphi(z),\varphi(z)$.\\

If a component has the form $ \varphi (\Phi (x, s)) $, we use the function $ \varphi ^ {(\kappa)} (\Phi (x, s)) $, where $ \varphi ^ {(\kappa)} (z) $ is the $ \kappa $-th derivative of the main function $ \varphi (z) $, to denote \textit{the $ \kappa $-th derivative of the component}. (That is to say that it is not the same as a partial derivative of the component.)\\

These distinctions become important in the construction of the partial derivatives of an improper function. Let $\Phi(x,s)$ be an improper function of $m$-th rank, and let it consist of $\mu$ components.
If we are to construct the $\kappa$-th partial derivative of $\Phi(x,s)$, it must obviously be an algebraic function of the first $\kappa$ derivatives of all the components of $\Phi$, that is an algebraic function of at most $\kappa\mu$ arguments.
Hence, we have that the first $\frac{(\kappa+1)(\kappa+2)}{2}$ partial derivatives of $\Phi$ depend algebraically on at most $\kappa\mu$ arguments (up to the $\kappa$-th order).
And since the number $\frac{(\kappa+1)(\kappa+2)}{2}$ is greater than $\kappa\mu$ for sufficiently large $\kappa$, it follows from \hyperlink{Lemma 1}{Lemma 1} that every improper function of 2 variables satisfies an algebraic partial differential equation. And it follows in particular that

\hypertarget{Theorem 4}{}
\begin{theorem}
Both the function 
$$\zeta(x,s)=\frac{x}{1^s}+\frac{x^2}{2^s}+\frac{x^3}{3^s}+\ldots$$
and more generally any function that admits a series representation satisfying the conditions of \hyperlink{Theorem 3}{Theorem 3} is a proper function of two variables, so it cannot be obtained by concatenating analytic functions of one variable with algebraic functions of several variables.
\end{theorem}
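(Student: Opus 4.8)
The plan is to prove Theorem 4 by contradiction, combining \hyperlink{Theorem 3}{Theorem 3} with the observation already recorded in the discussion immediately preceding the statement: \emph{every} improper function of two variables of finite rank satisfies an algebraic partial differential equation. Granting that observation, the conclusion is essentially immediate, so the substance of the argument lies in re-deriving that observation carefully and then invoking Theorem 3.

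First I would suppose, toward a contradiction, that $\zeta(x,s)$ — or, more generally, any function $F(x,s)$ admitting a series representation satisfying the hypotheses of Theorem 3 — were an improper function $\Phi(x,s)$ of some finite rank $m$ built from finitely many, say $\mu$, components. The key structural fact is that the partial derivatives of $\Phi$ are produced by repeated application of the chain rule across its components, so that every partial derivative of $\Phi$ of order at most $\kappa$ is an algebraic function of the first $\kappa$ derivatives of the $\mu$ components, hence an algebraic function of at most $\kappa\mu$ arguments. Since a two-variable function has exactly $\frac{(\kappa+1)(\kappa+2)}{2}$ partial derivatives of order $\le\kappa$ (there being $j+1$ of each order $j$), and since $\frac{(\kappa+1)(\kappa+2)}{2} > \kappa\mu$ once $\kappa$ is large, I would fix such a $\kappa$ and set $N := \kappa\mu$.

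I would then apply \hyperlink{Lemma 1}{Lemma 1} with the field $K$ taken to be the field of rational functions in $x,s$, over which the algebraic functions occurring in an improper function are defined. The partial derivatives of $\Phi$ of order $\le\kappa$ then constitute $N+1$ functions of the $N$ arguments supplied by the component derivatives, each algebraic over $K$, so Lemma 1 furnishes a nontrivial polynomial $\Phi(y_0,\ldots,y_N)$ over $K$ vanishing when evaluated on these partial derivatives. After clearing denominators this is precisely an algebraic partial differential equation satisfied by $\zeta(x,s)$. But Theorem 3 guarantees that $\zeta(x,s)$ — and every function meeting its hypotheses, which $\zeta$ does since $\lim_{n\to\infty}\log(n)/\log\log(n)=\infty$ — satisfies no algebraic partial differential equation. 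This contradiction shows that $\zeta(x,s)$ cannot be improper of any finite rank, so it is a proper function of two variables, as claimed.

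The main obstacle is not the concluding contradiction, which is immediate once a PDE is produced, but the bookkeeping behind the dimension count: one must verify that differentiating an improper function, through all its nested main functions and algebraic layers, never introduces more than $\kappa$ derivatives of each of the $\mu$ components, so that the arguments genuinely remain within a fixed finite set of size $\kappa\mu$. This rests on the definition of the $\kappa$-th derivative of a component and on the chain rule respecting the rank structure. Once that is secured, crossing the threshold $\frac{(\kappa+1)(\kappa+2)}{2} > \kappa\mu$ and applying Lemma 1 are routine.
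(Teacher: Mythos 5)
Your proposal is correct and follows essentially the same route as the paper: the counting argument that the $\frac{(\kappa+1)(\kappa+2)}{2}$ partial derivatives of order $\le\kappa$ of an improper function with $\mu$ components are algebraic functions of at most $\kappa\mu$ component derivatives, the application of Lemma 1 once $\frac{(\kappa+1)(\kappa+2)}{2}>\kappa\mu$ to produce an algebraic partial differential equation, and the contradiction with Theorem 3 (using $\lim_{n\to\infty}\log(n)/\log\log(n)=\infty$ for $\zeta(x,s)$). The only cosmetic difference is that you phrase it as an explicit proof by contradiction, whereas the paper states the intermediate fact (every improper function of two variables satisfies an algebraic PDE) affirmatively and then reads off the theorem.
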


We can furthermore show that the functions referenced in Theorem 4 cannot even satisfy a partial differential equation whose coefficients are improper functions of $ x $ and $ s $. More precisely,

\hypertarget{Theorem 5}{}
\begin{theorem}
Suppose an analytic function $f(x,s)$ of two variables satisfies a partial differential equation whose left-hand side is a polynomial of the unknown function and its derivatives, and whose coefficients are improper functions of two variables. Then it also satisfies a partial differential equation whose left-hand side is a polynomial in the unknown function and its derivatives with numerical coefficients.
\end{theorem}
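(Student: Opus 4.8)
The plan is to imitate the proof, given just before Theorem 4, that every improper function satisfies an algebraic partial differential equation: I will run a transcendence-degree count in which the number of partial derivatives of $f$ up to order $\kappa$ grows \emph{quadratically} in $\kappa$, while the relations forced by the given equation and by the coefficients pin down all but \emph{linearly} many of them. After collecting terms, write the hypothesised equation as $\sum_j g_j M_j = 0$, where the $M_j$ are monomials in the partial derivatives of $f$ (the variables $x,s$ may be absorbed into the coefficients, since $x$ and $s$ are themselves improper functions of first rank), and each coefficient $g_j$ is an improper function of $x,s$. By the result established just before Theorem 4 (an application of Lemma 1), each $g_j$ satisfies an algebraic partial differential equation with \emph{numerical} coefficients; I take each such equation, and the equation $\sum_j g_j M_j=0$ itself, to be of least possible order, so that their separants do not vanish identically.

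Now fix a large $\kappa$ and work over the base field $A=\mathbb{C}(x,s)$, writing $N_\kappa=\frac{(\kappa+1)(\kappa+2)}{2}$ for the number of partial derivatives of order $\le\kappa$ in two variables. First I would bound the transcendence degree of the field $W_\kappa$ generated over $A$ by all $g_j^{(\beta)}$ with $|\beta|\le\kappa$. Fixing an orderly ranking of derivatives and differentiating the algebraic PDE for a given $g_j$, each derivative $\partial^\gamma$ of that equation has a distinct leading derivative of $g_j$ whose coefficient is the nonzero separant; hence that leading derivative is algebraic over the field generated by the lower-ranked ones. Only the finitely-many-per-order non-leading derivatives remain free, so $\operatorname{trdeg}(W_\kappa/A)=O(\kappa)$. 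Exactly the same leading-derivative bookkeeping, applied to $\sum_j g_j M_j=0$ and to its derivatives $\partial^\gamma(\sum_j g_j M_j)=0$ — whose coefficients now lie in $W_\kappa$ — shows that, over $W_\kappa$, all but $N_\kappa-R_\kappa$ of the derivatives $f^{(\alpha)}$ with $|\alpha|\le\kappa$ are algebraic over the remaining ones, where $R_\kappa$ counts the usable differentiated equations. Since the order of the equation is fixed, $N_\kappa-R_\kappa=O(\kappa)$, so $\operatorname{trdeg}\big(E_\kappa/W_\kappa\big)=O(\kappa)$, where $E_\kappa=W_\kappa\big(\{f^{(\alpha)}\}_{|\alpha|\le\kappa}\big)$.

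By the tower law, $\operatorname{trdeg}(E_\kappa/A)=\operatorname{trdeg}(W_\kappa/A)+\operatorname{trdeg}(E_\kappa/W_\kappa)=O(\kappa)$. As $N_\kappa$ grows quadratically, for all sufficiently large $\kappa$ we have $\operatorname{trdeg}(E_\kappa/A)<N_\kappa$; since the $f^{(\alpha)}$ lie in $E_\kappa$, they must then be algebraically dependent over $A=\mathbb{C}(x,s)$. Clearing denominators, this dependence is an algebraic partial differential equation satisfied by $f$ whose coefficients are polynomials in $x,s$. Finally, the iterated (several-variable) form of the remark following Proposition 1 lets me pass to an algebraic partial differential equation not depending explicitly on $x,s$ — that is, one with numerical coefficients — which is the assertion.

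The step I expect to be the crux is the \emph{linear} bound $\operatorname{trdeg}(E_\kappa/W_\kappa)=O(\kappa)$, rather than merely $\le N_\kappa-1$: because $\operatorname{trdeg}(W_\kappa/A)$ is itself already of order $\kappa$, a weaker reduction coming from only a single relation would fail to beat $N_\kappa$. Securing the strong bound forces the leading-derivative/separant machinery — choosing the equations of minimal order so the separants survive, and fixing an orderly ranking so that differentiating an equation raises its leader while leaving every other term of strictly lower rank. This is precisely the point that is entirely transparent for genuinely algebraic data but must be arranged with care here; everything else is the same quadratic-versus-linear growth that drives Theorem 3 and the discussion preceding Theorem 4.
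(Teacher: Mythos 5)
Your proof is correct, and its engine is the same quadratic-versus-linear transcendence count that drives the paper's argument; in particular your handling of the main equation is exactly the paper's equations (19)--(21): differentiate $\Phi=0$ repeatedly and solve for the cone of leading derivatives $f_{e,n}$ (those with $e\ge i$, $n\ge m-i$) with a power of the separant $S$ in the denominator, so that only $O(\kappa)$ derivatives of $f$ remain free. Where you genuinely diverge is in the treatment of the coefficients. The paper never passes through the statement that improper functions satisfy algebraic partial differential equations: it uses the finer structural fact that \emph{all} derivatives up to order $\kappa'$ of the $\mu$ components are algebraic over only $\mu(\kappa'+1)$ quantities (the derivatives of the one-variable main functions), adjoins the non-leading derivatives $q_g$ of $f$ to the complex numbers as indeterminates, and applies Lemma \ref{lemma:sec3} a single time to the $\frac{\kappa'(\kappa'+3)}{2}$ expressions $\chi_t/S^l$ viewed as algebraic functions of those $\mu(\kappa'+1)$ quantities over $K=\mathbb{C}(q_g)$ --- which delivers an equation with numerical coefficients in one stroke. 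You instead invoke only the consequence that each coefficient $g_j$ satisfies some algebraic PDE with numerical coefficients, run the leader/separant count a second time on those equations to get $\operatorname{trdeg}(W_\kappa/A)=O(\kappa)$, and combine via the tower law; this costs you the extra final appeal to the remark following \hyperlink{Proposition 1}{Proposition 1} to strip out the explicit $x,s$, but it buys real generality: your argument shows that any $f$ satisfying an algebraic PDE whose coefficients each satisfy \emph{some} algebraic PDE with numerical coefficients already satisfies one with numerical coefficients, improperness entering only to certify that hypothesis (a two-variable analogue of \hyperlink{Theorem 9}{Theorem 9}, cf.\ the footnote to \hyperlink{Theorem 16}{Theorem 16}, with a single equation per coefficient in place of a Mayer system). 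The one point to sharpen --- which you correctly flag, and which the paper treats with the same brevity --- is the nonvanishing of the separants on the actual solutions: minimality of the order alone does not force $S\neq 0$ there, and one should also minimize the degree in the leading derivative (or replace the equation by its separant and recurse).
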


\begin{proof}
In what follows we understand $ f_{i, k} $ to mean $\frac{\partial ^ {i + k} f (x, s)}{\partial x ^ i \partial s ^ k} $.  Now let \begin{align}
\Phi(f_{i, k}) = 0, \end{align}
be the differential equation of $m$-th order that is satisfied by $f(x,s)$. As stated, $ \Phi $ is a polynomial in $ f $ and its partial derivatives, whose coefficients are improper functions of $ x, s $.
At the same time, let $ \Phi $ be a differential equation with those properties while being of the smallest possible order. Let $\mu$ denote the number of different components occurring in the coefficients of $\Phi$.
Let $ f_{i, m-i} $ be a derivative of the $m$-th order that actually occurs in $ \Phi $, where $ i $ is chosen to be as large as possible. We denote this partial derivative of $\Phi$ by $S$. Since we assumed $ m $ to be as small as possible, $ S $ is non-zero.\\

Let us now construct the $ \kappa + 1 $ equations that result from (18) by taking the \textit{total derivative} of $ \kappa $ times with respect to $ x $ and $ s $,
$$ \frac{d ^ \kappa \Phi}{dx ^ \kappa} = 0, \frac {d^{\kappa-1} \Phi} {dx ^{\kappa-1} ds} = 0, \ldots, \frac {d ^ {\kappa} \Phi} {ds^\kappa} = 0, $$
and note that they have the form
\begin{align} Sf_{i + \kappa, m-i} = \varphi_1(f_ {e, n}), Sf_ {i + \kappa-1, m-i + 1} = \varphi_2(f_{e, n}), \ldots, Sf_ {i, m-i + \kappa} = \varphi _ {\kappa + 1} (f_{e, n}), \end{align}
where the $ \varphi $ are polynomials in the $ f_ {e, n} $ whose orders $ e + n $ are at most $ m + \kappa $.
Of the $ f_ {e, n} $, whose orders are $ m + \kappa $, $ \varphi_{\kappa + 1} $ contains only those with $ e <i $, $ \varphi_ \kappa $ only those with $ e <i + 1, \ldots $, and $ \varphi_1 $ only those with $ e <i + \kappa $.
Therefore by repeated substitution, starting with the equations (19), we arrive at equations in the form 
\begin{align} S^lf_{i +\kappa, m-i} = \psi_1(f_ {e, n}), S^lf_{i + \kappa-1, m-i + 1} = \psi_2 (f_{e , n}),\ldots,
S^lf_{i, m-i + \kappa} = \psi_{\kappa + 1} (f_{e, n}).\end{align}
Here the arguments $ f_{e, n} $ of the functions $ \psi $ are partial derivatives of $f$ up to the $(m + \kappa)$-th order.  However, among the derivatives of $ (m + \kappa) $-th order, the $ f_ {e, n} $ for which $ e \geq i $, $ n \geq m-1 $, are \textit{not} present as arguments of the $\psi$, and equations (20) serve precisely to express these $ f_ {e, n} $ using the remaining partial derivatives of $ m $ -th order and lower.
If we let $ \kappa $ take on all integer values from 1 to $ \kappa '$ and use the equations (20) iteratively for smaller values of $ \kappa $, we finally obtain the rational expressions for all the $ f_ {e, n} $ with $ m < e + n \leq m + \kappa '$, $ e \geq i $, $ n \geq m-i $ in terms of remaining partial derivatives up to the $(m + \kappa')$-th order.
Let us denote the $ \frac {\kappa '(\kappa' + 3)} {2} $ derivatives $ f_ {e, n} $ with $ m <e + n \leq m + \kappa '$, $ e \geq i $, $ n \geq m-i $ as $ p_1, p_2, \ldots $ (the precise order is irrelevant), the remaining partial derivatives of $ f $ up to the $ (m + \kappa') $-th order as $ q_1, q_2, \ldots $, we obtain representations of the form
\begin{align} S ^ lp_t = \chi_t (q_g) \hspace{30pt} (t = 1,2, \ldots, \frac {\kappa '(\kappa' +3)} {2}). \end{align}
Here $ \chi $ and $ S $ are polynomials in the arguments $ q_g $ with coefficients that are algebraic in the  derivatives of various components of the coefficients of $ \Phi $ (up to $ \kappa'$-th order), that is to say expressions of at most $ \mu(\kappa'+ 1) $ variables.
Let us consider the quantities $ q_g $ as indeterminate and use $K$ to denote the field one obtains by adjoining said indeterminates to the field of complex numbers.  Next, let $x_1,x_2,...$ denote the derivatives of all components of the coefficients of $ \Phi$ occurring in $\chi$ and $S$ as well as the components themselves (as before, the order is arbitrary). Provided we assume $ \kappa '> 2 \mu $, \hyperlink{Lemma 1}{Lemma 1} may be invoked, and we see that there is a relation between the $ \frac {\kappa_t} {S} $, which are functions of the $x$, with the coefficients originating from the field $K$.
Therefore we've established a relation between the $ p_t $
\begin{align} \Psi(p_t, q_g)=0\end{align}
whose left-hand side is a polynomial in the $ p_t $ with coefficients that are non-identically vanishing rational functions of the $q_g$. We can now clearly assume these coefficients to be polynomials in the $ q_g $ and use (22) to obtain an algebraic partial differential equation for $f(x, s)$. 
\end{proof}
It seems reasonable to conjecture that \hyperlink{Theorem 4}{Theorems 4} and \hyperlink{Theorem 5}{5} will still hold if we broaden the definition of improper functions in such a way that the functions of one variable used to construct them need only be continuous, but not analytic, provided the improper functions themselves are analytic; that is to say the functions \hyperlink{Theorem 4}{Theorem 4} concerns itself with cannot be constructed using arbitrary continuous functions of one variable and algebraic functions of several variables. However, I have not succeeded at producing a rigorous proof of this without gaps.\\

\newpage
%stop here

\section{The main theorem regarding arbitrary convergent Dirichlet series and some extensions of the main theorem.}
\hypertarget{Theorem 6}{}
\begin{theorem}
If a function 
$$ \varphi (s) = \sum a_i e ^ {- \lambda_i s}, $$
represented by a convergent Dirichlet series, satisfies an algebraic difference differential equation, then the system of exponents $ \lambda_i $ possesses a finite linear basis.
\end{theorem}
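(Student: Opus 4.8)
The plan is to deduce Theorem 6 from the machinery already in place, the only genuinely new feature being that we are given mere convergence rather than the absolute convergence assumed in \hyperlink{Theorem 1}{Theorem 1}. As in that theorem, I would first apply \hyperlink{Proposition 1}{Proposition 1} to replace the given algebraic difference differential equation by one of the form $F(f^{(\nu)}(s+h_\mu))=0$ that does not depend explicitly on $s$. The goal is then to show that $\varphi(s)$ \emph{formally} satisfies this equation, after which \hyperlink{Proposition 3}{Proposition 3}, together with the remarks appended to its proof, immediately yields that the system of exponents $\lambda_i$ possesses a finite linear basis.

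To reach formal satisfaction I would argue in parallel with Theorem 1. A convergent Dirichlet series converges throughout a half-plane $\text{Re}(s)>\sigma_c$, and this property is inherited by each factor $\varphi^{(\nu)}(s+h_\mu)$, since term-by-term differentiation leaves the abscissa of convergence unchanged and the shift $s\mapsto s+h_\mu$ merely translates it. Substituting the series into $F$ and collecting equal exponents produces a formal Dirichlet series $D(s)$; if I can show that $D(s)$ converges in some half-plane and there represents the analytic function $F(\varphi^{(\nu)}(s+h_\mu))$, which vanishes identically by hypothesis, then the uniqueness theorem quoted in the introduction --- which requires only simple convergence --- forces every coefficient of $D(s)$ to vanish, and $\varphi$ formally satisfies the equation.

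The main obstacle is precisely the step that was free in Theorem 1: justifying that the \emph{formal} product of the conditionally convergent factors converges and represents the product of the corresponding functions. Under absolute convergence this is immediate, but one cannot in general recover absolute convergence by moving deeper into the half-plane: if the $\lambda_i$ grow slowly (for instance $\lambda_i\sim\sqrt{\log i}$), a simply convergent series may fail to converge absolutely anywhere, so a naive reduction to Theorem 1 breaks down. I would therefore have to supply a multiplication theorem for simply convergent Dirichlet series --- that the product series, arranged in order of increasing exponent, converges throughout the common open half-plane to the product of the sums. I expect to establish this by Abel summation combined with the uniform convergence of a convergent Dirichlet series in angular sectors $|\arg(s-s_0)|\le\theta<\pi/2$, controlling the tail of the convolution by partial-summation estimates. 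Granting such a product theorem, the remainder of the argument is purely formal and the conclusion follows exactly as from Proposition 3.
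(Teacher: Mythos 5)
Your skeleton is the right one---reduce via \hyperlink{Proposition 1}{Proposition 1} to an equation $F(f^{(\nu)}(s+h_\mu))=0$ free of $s$, show that $\varphi$ satisfies it \emph{formally}, then invoke \hyperlink{Proposition 3}{Proposition 3}---and you have correctly located the crux: without absolute convergence one cannot simply multiply out the series. But the lemma you propose to fill this gap is not established, and as stated it is false. Even for ordinary Dirichlet series, the formal product of two series each converging for $\Re(s)>\sigma_0$ need not converge throughout that half-plane; Stieltjes' theorem gives convergence only for $\Re(s)>\sigma_0+\tfrac12$, and this is sharp. Worse, for general exponents with $\log i/\lambda_i\to\infty$ (exactly the case you flag, where no half-plane of absolute convergence exists), it is not clear that the $n$-fold formal product converges in \emph{any} half-plane, so even the weakened statement you would actually need is in doubt. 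Abel summation and uniformity in Stolz angles do not rescue this: the difficulty lies in the rearrangement of the $n$-fold convolution by exponent, not in the convergence of the individual factors.

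The paper circumvents the multiplication problem entirely. Suppose the formal series $F(\varphi^{(\nu)}(s+h_\mu))$ has a first nonvanishing term $ae^{-\lambda s}$. Only the finitely many $\lambda_i\le n|\lambda_0|+|\lambda|$ can contribute to this coefficient---a purely formal, finitary observation. Split $\varphi=A_i+R_i$ with $A_i$ a finite partial sum; by the uniform convergence of a convergent Dirichlet series along the positive real axis one may write $R_i(s)=e^{-\lambda_i s}S(s)$ with $\lim_{s\to+\infty}S(s)=a_i$, and similarly for each $R_i^{(\nu)}(s+h_\mu)$. Taylor expansion of $F$ about the finite sums $A_i^{(\nu)}(s+h_\mu)$ yields $F(A_i^{(\nu)}(s+h_\mu))+e^{-(\lambda_i-n|\lambda_0|)s}\Phi(s)=0$ with $\Phi$ bounded as $s\to+\infty$, while the first term is a \emph{finite} exponential sum of the form $e^{-\lambda s}P(s)$ with $P(s)\to a$ once $\lambda_i>|\lambda|+n|\lambda_0|$. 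Multiplying by $e^{\lambda s}$ and letting $s\to+\infty$ along the reals forces $a=0$, a contradiction; so the formal series vanishes and Proposition 3 applies. Only finite products of series are ever formed, and the only analytic input is uniform convergence on real rays. To keep your route you would have to prove an $n$-fold multiplication theorem valid in some half-plane for general exponents, which I do not believe is available; otherwise you should switch to an asymptotic argument of this kind.
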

\begin{proof}
If the analytic function $ \varphi(s) $ satisfies an algebraic difference differential equation, then according to \hyperlink{Proposition 1}{Proposition 1} it also satisfies one of the form
\begin{align}
F(f ^ {(\nu)}(s + h_\mu)) = 0,
\end{align}
where $ F $ is a polynomial not explicitly containing $s$ as an argument. In order to prove our theorem, it therefore suffices to show that the series $ \varphi(s) $ also \textit{formally} satisfies equation (23), since then the assertion from \hyperlink{Proposition 3}{Proposition 3} follows. Let us now assume that the Dirichlet series
\begin{align}
F (f ^ {(\nu)} (s + h_\mu)),
\end{align}
which results from the formal calculations does not vanish identically, and let $ ae ^ {-\lambda s} $ be its first non-vanishing term. It is apparent that only finitely many exponents $ \lambda_i $ of $ \varphi(s) $ contribute to the formation of the exponent $ \lambda $ in (24), as well as the exponents of the preceding terms (24) which have vanished due to canceling. For instance, all exponents $ \lambda_i> n | \lambda_0 | + | \lambda | $, where $ n $ is the total degree of $ F $, certainly do not influence this. If we modify certain terms of $ \varphi (s) $ for which $ \lambda_i> n |\lambda_0| + | \lambda | $, then the initial term of the modified series resulting from these insertation will again be $ ae ^ {- \lambda s} $. If we truncate the series $ \varphi (s) $ at any sufficiently large $ \lambda_i $, the difference between $ \varphi (s) $ and the resulting segment
$$ A_i (s) = \sum_0 ^ {i-1} a_ie ^ {- \lambda_p s} $$
of $ \varphi (s) $ is a function $ R_i (s) $, which due to the uniform convergence of Dirichlet series can be written in the following form:
$$ e ^ {- \lambda_i s} S (s), $$
where $ \lim_ {s \to \infty}S(s) = a_i $. 
 - Here and in the sequel, the designation $\lim_{s \to \infty} $ is to be understood in such a way that the limit for $s$ is taken along the positive reals. - And hence one can write
\begin{align}
\varphi ^ {(\nu)} (s + h_ \mu) = A_i ^ {(\nu)} (s + h_ \mu) + e ^ {- \lambda_i s} S ^ {(\nu, \mu )} (s),
\end{align}
where $\lim_ {s \to \infty} S ^ {(\nu, \mu)} (s)$ is a finite constant. If we now plug both $ \varphi (s) = A_i (s) + e ^ {- \lambda_i s} S (s) $ and the expressions (25) into equation (23), and expand using Taylor's theorem, an equation of the following form is created:
\begin{align}
F (A_i ^ {(\nu)} (s + h_ \mu)) + e ^ {- (\lambda_i-n | \lambda_0 |) s} \Phi (s) = 0,
\end{align}
where $ \lim_ {s \to \infty} \Phi (s) $ is finite. On the other hand, $ F (A_i ^ {(\nu)} (s + h_\mu)) $ can be written in the form
$$ e ^ {- \lambda s} P(s), $$
where $ \lim_ {s \to \infty} P (s) = a \neq 0 $, provided $ \lambda_i> | \lambda | + n | \lambda_0 | $. But if we plug this into (26), multiply by $ e ^ {\lambda s} $ and then let $ s $ converge to $ +\infty $, we find that $a=0$ as soon as $ \lambda_i> | \lambda | + n | \lambda_0 | $, which contradicts the assumption.
\end{proof}

Let us return once more to the proof of \hyperlink{Proposition 3}{Proposition 3} in \S1 and specifically to equation (3). We deduced from it that for every sufficiently large $ \lambda_i $ in $ F (A_i ^ {(\nu)} (s + h_ \mu)) $, there is a term with the exponent $ \Lambda_1 + \lambda_i $. But since $F$ is of total degree $ n $, every exponent that occurs in the terms of $ F (A_i ^ {\nu} (s + h_\mu)) $ is constructed from at most $ n $ numbers via addition, where all of the numbers are equal to some number from among $ \lambda_1, \lambda_2, \ldots, \lambda_{i-1} $. We therefore get the relation $ | \Lambda_1 + \lambda_i | \leq n |\lambda_{i-1}| $ for $ \lambda_i $, from which, since $ \Lambda_1 $ is constant,
$$ \lim_ {i \to\infty} \text{sup} \frac {\lambda_i} {\lambda_{i-1}} \leq n $$
follows. We thus arrive at the theorem:
\hypertarget{Theorem 7}{}
\begin{theorem}
If a convergent Dirichlet series
$$ \sum a_ie ^ {- \lambda_i s}$$
has the property that
$$ \lim_ {i \to \infty} \text{sup} \frac {\lambda_i} {\lambda_{i-1}} \to \infty,$$
it cannot satisfy any algebraic difference differential equation.
\end{theorem}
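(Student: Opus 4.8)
The plan is to argue by contradiction, extracting a quantitative bound on the growth ratio $\lambda_i/\lambda_{i-1}$ from the machinery already assembled for Proposition 3 and Theorem 6. Suppose the convergent series $\varphi(s) = \sum a_i e^{-\lambda_i s}$ did satisfy some algebraic difference differential equation. First I would invoke \hyperlink{Proposition 1}{Proposition 1} to replace that equation by one of the form $F(f^{(\nu)}(s+h_\mu)) = 0$, in which $F$ is a polynomial of some finite total degree $n$ not explicitly containing $s$. Next, exactly as in the proof of \hyperlink{Theorem 6}{Theorem 6}, I would use the convergence of $\varphi$ together with the uniqueness theorem for Dirichlet series to upgrade ``satisfies'' to ``\emph{formally} satisfies'': substituting the series into $F$ and rearranging formally produces a Dirichlet series which converges and represents the identically vanishing analytic function, so by uniqueness it must vanish term by term.

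Once formal satisfaction is in hand, the heart of the argument is to reuse the analysis surrounding equation (3) in \hyperlink{Proposition 3}{Proposition 3}. There one shows that for every sufficiently large index $i$ the truncated substitution $F(A_i^{(\nu)}(s+h_\mu))$, with $A_i(s) = \sum_0^{i-1} a_t e^{-\lambda_t s}$, must contain a term whose exponent equals $\Lambda_1 + \lambda_i$, since the surviving term (4) cannot be cancelled by the higher-order pieces. I would then pair this with the elementary degree bound: because $F$ has total degree $n$, every exponent occurring in $F(A_i^{(\nu)}(s+h_\mu))$ is a sum of at most $n$ of the numbers $\lambda_0,\ldots,\lambda_{i-1}$, and hence has absolute value at most $n|\lambda_{i-1}|$. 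Comparing the two facts yields $|\Lambda_1 + \lambda_i| \leq n|\lambda_{i-1}|$ for all large $i$.

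From here the conclusion is immediate. Since $\Lambda_1$ is a fixed constant and $\lambda_{i-1} \to +\infty$, dividing through gives $\limsup_{i\to\infty} \lambda_i/\lambda_{i-1} \leq n$, which is finite. This contradicts the hypothesis that the limsup is $+\infty$, so no algebraic difference differential equation can exist for such a series.

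The main obstacle, as throughout this circle of results, is not the final ratio estimate but the justification of the non-cancellation claim in the middle step: namely that the $\Lambda_1 + \lambda_i$ term genuinely survives in $F(A_i^{(\nu)}(s+h_\mu))$ rather than being annihilated by the contributions of $\Psi_2, \Psi_3, \ldots$ in (3). This is precisely where \hyperlink{Proposition 2}{Proposition 2} (finiteness of the real roots of exponential polynomials) and the threshold $\Lambda + |\Lambda_1| + |\Lambda_2| + n|\lambda_0|$ extracted in \hyperlink{Proposition 3}{Proposition 3} do the real work. Rather than redo that analysis, I would simply cite the estimate already established there, so that the proof of the present theorem reduces to the one-line growth comparison above.
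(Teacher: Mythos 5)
Your proposal is correct and follows essentially the same route as the paper: Ostrowski's own proof of this theorem likewise returns to equation (3) in the proof of \hyperlink{Proposition 3}{Proposition 3}, extracts the surviving term of exponent $\Lambda_1+\lambda_i$ in $F(A_i^{(\nu)}(s+h_\mu))$, and combines it with the observation that every exponent there is a sum of at most $n$ of the numbers $\lambda_0,\ldots,\lambda_{i-1}$ to get $|\Lambda_1+\lambda_i|\leq n|\lambda_{i-1}|$ and hence $\limsup \lambda_i/\lambda_{i-1}\leq n$. Your explicit routing through \hyperlink{Proposition 1}{Proposition 1} and the formal-satisfaction step of \hyperlink{Theorem 6}{Theorem 6} merely spells out what the paper leaves implicit.
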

We again return our attention to equation (3) from the proof of \hyperlink{Proposition 3}{Proposition 3}, with the added assumption that $ h_\mu = 0 $ for all such numbers, i.e. that the equation $ F(f ^ {(\nu)} ( s + h_\mu)) = 0 $ is reduced to the form $ F (f^{(\nu)} (s)) = 0 $ and therefore becomes a \textit{differential} equation. Then the expression that arises from (4) upon once again setting $ b_\varrho $ for $ b_{\varrho_t, \sigma_t} $,
\begin{align}
a_ie ^ {- (\Lambda_1 + \lambda_t) s} \sum b_{\varrho_t} (- \lambda_i) ^ {\varrho_t}
\end{align}
is contained in $ F (A_i ^ {(\nu)} (s)) $ for sufficiently large $i$. Therefore the coefficient
$$ a_i \sum b _ {\varrho_t} (- \lambda_i) ^ {\varrho_t} $$
of (27) can be expressed rationally in terms the coefficients of $F$, that is $ a_0, \ldots, a_ {i-1} $, $ \lambda_0, \lambda_1, \ldots, \lambda_ {i-1} $, using rational coefficients. But since each of the $ \lambda_i $ can be expressed as an integer linear combination of finitely many $\lambda_i$, we can represent each of the $ a_i $ in turn using the coefficients of $ F $, $ b _ {\varrho_t} $ and a finite number of the $ a_i $ and $ \lambda_i $, i.e. we may represent them rationally using \textit{finitely many numbers} and rational coefficients. - We will say that a system of numbers $ k_1, k_2, \ldots $ has \textit{a finite basis} if there are finitely many numbers $ \kappa_1, \kappa_2, \ldots, \kappa_r $ such that all $ k_i $ can be represented as rational functions of the $ \kappa_1, \ldots, \kappa_r $ and rational coefficients. With the help of this definition we can now pronouncex this theorem:
\hypertarget{Theorem 8}{}
\begin{theorem}
A Dirichlet series 
$$ \sum a_i e ^ {- \lambda_i s}, $$
whose coefficients have no finite basis cannot satisfy an algebraic differential equation.
\end{theorem}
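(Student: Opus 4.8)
The plan is to establish the \emph{contrapositive}: supposing that the analytic function $\varphi(s) = \sum a_i e^{-\lambda_i s}$ does satisfy an algebraic differential equation, I will show that its coefficients $a_i$ must admit a finite basis. First I would apply Proposition 1 to replace the given equation by one of the form $F(f^{(\nu)}(s)) = 0$ with $F$ a polynomial not explicitly involving $s$; since we are now dealing with a genuine differential equation rather than a difference differential equation, every shift $h_\mu$ is zero and the entire analysis of Proposition 3 specializes to this case word for word. In particular, because an algebraic differential equation is a special algebraic difference differential equation, Theorem 6 applies and furnishes a finite linear basis for the exponents: each $\lambda_i$ is an integer linear combination of finitely many fixed exponents, and hence all the $\lambda_i$ lie in one finitely generated extension of the rationals.

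Next I would draw on the relation already isolated from equation (3). For all sufficiently large $i$ the term (27), whose coefficient is $a_i \sum b_{\varrho_t}(-\lambda_i)^{\varrho_t}$, occurs inside $F(A_i^{(\nu)}(s))$, where $A_i(s) = \sum_0^{i-1} a_t e^{-\lambda_t s}$ is the truncation of $\varphi$. Since $A_i$ is a finite Dirichlet polynomial and $F$ carries fixed numerical coefficients, the coefficient of $e^{-(\Lambda_1 + \lambda_i)s}$ in $F(A_i^{(\nu)}(s))$ is a polynomial, with rational coefficients, in the quantities $a_0, \ldots, a_{i-1}$ and $\lambda_0, \ldots, \lambda_{i-1}$. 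Equating it with the coefficient of (27) produces the single relation
$$a_i \sum b_{\varrho_t}(-\lambda_i)^{\varrho_t} = P_i(a_0, \ldots, a_{i-1}; \lambda_0, \ldots, \lambda_{i-1}),$$
in which $P_i$ has rational coefficients and the $b_{\varrho_t}$ are themselves determined rationally by $F$ together with finitely many of the earlier $a_j$ and $\lambda_j$.

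The heart of the argument is then to solve for $a_i$ and run the resulting recursion. By Proposition 2 the sum $\sum b_{\varrho_t}(-\lambda)^{\varrho_t}$, regarded as a function of $\lambda$, has only finitely many real zeros, so it does not vanish at $\lambda = \lambda_i$ once $i$ exceeds some bound; dividing through then exhibits $a_i$ as a rational function, with rational coefficients, of $a_0, \ldots, a_{i-1}$, of $\lambda_0, \ldots, \lambda_{i-1}$, and of the $b_{\varrho_t}$. Feeding in the finite linear basis of the exponents, so that each $\lambda_j$ is an integer combination of finitely many fixed numbers, an induction on $i$ shows that every $a_i$ is ultimately a rational function, with rational coefficients, of a single finite collection of numbers: the numerical coefficients of $F$, the finitely many basis exponents, the finitely many coefficients $a_0, \ldots, a_N$ preceding the index at which the recursion takes hold, and the $b_{\varrho_t}$. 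This is precisely the statement that $\{a_i\}$ possesses a finite basis, so the contrapositive yields the theorem.

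I expect the principal difficulty to be careful bookkeeping rather than any isolated deep step. One must verify that the term (27) really does appear in $F(A_i^{(\nu)}(s))$ with the stated coefficient — this is exactly what the construction in the proof of Proposition 3 supplies — and that the recursion closes up with only finitely many quantities left in its base. The one genuinely load-bearing analytic input is the nonvanishing of the denominator $\sum b_{\varrho_t}(-\lambda_i)^{\varrho_t}$ for large $i$, guaranteed by Proposition 2; without it the inductive step could not be carried out, and it is here that the argument would collapse if that sum were permitted to vanish along an infinite subsequence.
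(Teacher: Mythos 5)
Your proposal is correct and follows essentially the same route as the paper: both arguments extract the term (27) from the specialization of equation (3) to the case $h_\mu=0$, equate its coefficient $a_i\sum b_{\varrho_t}(-\lambda_i)^{\varrho_t}$ with a rational expression in $a_0,\ldots,a_{i-1}$, $\lambda_0,\ldots,\lambda_{i-1}$ and the coefficients of $F$, invoke Proposition 2 to guarantee the divisor is nonzero for large $i$, and then recurse using the finite linear basis of the exponents to land every $a_i$ in a fixed finitely generated field. The paper states this more tersely but the logical content is identical.
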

The application of this theorem is simplified by the fact that the existence of such a basis means the numbers for a basis for the $a_i$ \textit{can be taken from the $a_i$ themselves}. Namely, 
\hypertarget{Proposition 4}{}
\begin{prop}
If a system of numbers $ k_1, k_2, \ldots $ possesses a finite basis, it also possesses a finite basis which is a subset of $\{k_i\}$.
\end{prop}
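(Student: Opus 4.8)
The plan is to translate the statement into field theory and reduce it to the finiteness of a relative algebraic closure. By hypothesis every $k_i$ is a rational function with rational coefficients of fixed numbers $\kappa_1,\ldots,\kappa_r$, so all the $k_i$ lie in the field $F=\mathbb{Q}(\kappa_1,\ldots,\kappa_r)$, which is finitely generated over $\mathbb{Q}$. Let $L=\mathbb{Q}(k_1,k_2,\ldots)$ be the subfield they generate. To say the $k_i$ admit a finite basis drawn from among themselves is exactly to say that $L$ is generated over $\mathbb{Q}$ by finitely many of the $k_i$; so it suffices to prove that the intermediate field $\mathbb{Q}\subseteq L\subseteq F$ is finitely generated over $\mathbb{Q}$ and, moreover, that the generators may be taken from the $k_i$.

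First I would bound the transcendence degree: since $L\subseteq F$ and $F$ has transcendence degree at most $r$ over $\mathbb{Q}$, so does $L$; call it $d$. Because the $k_i$ generate $L$, a maximal algebraically independent subset of $\{k_i\}$ exists, has exactly $d$ elements, and is a transcendence basis of $L/\mathbb{Q}$; denote it $k_{i_1},\ldots,k_{i_d}$ and set $E=\mathbb{Q}(k_{i_1},\ldots,k_{i_d})$. By construction $L/E$ is algebraic, and the crux is to upgrade this to $L/E$ being \emph{finite}. Granting that, each element of $L$ is a polynomial in the $k_i$ with coefficients in $E$ (finiteness forces the $E$-algebra generated by the $k_i$ to be already the field $L$), so any finite $E$-vector-space basis of $L$ involves only finitely many of the $k_i$, say $k_{j_1},\ldots,k_{j_N}$. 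Then $L=E(k_{j_1},\ldots,k_{j_N})$, and the finite set $\{k_{i_1},\ldots,k_{i_d},k_{j_1},\ldots,k_{j_N}\}\subseteq\{k_i\}$ is the desired basis.

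The main obstacle is therefore the field-theoretic lemma that the algebraic closure $\tilde E$ of $E$ inside the finitely generated extension $F$ is a \emph{finite} extension of $E$; since $L/E$ is algebraic and $L\subseteq F$, we have $L\subseteq\tilde E$, whence $[L:E]<\infty$. To prove the lemma I would choose a transcendence basis $x_1,\ldots,x_\delta$ of $F/E$, so that $E':=E(x_1,\ldots,x_\delta)$ is purely transcendental over $E$ and $F/E'$ is finite. Since $\mathbb{Q}$ has characteristic zero, every algebraic extension of $E$ is separable, so the purely transcendental extension $E'/E$ is linearly disjoint from $\tilde E/E$; consequently every finite subextension $M/E$ of $\tilde E/E$ satisfies $[M:E]=[ME':E']\leq[F:E']<\infty$, a bound uniform in $M$, which forces $[\tilde E:E]<\infty$. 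The one subtle point is precisely this linear disjointness input, which is where characteristic zero is used; the remainder is bookkeeping with transcendence bases and the multiplicativity of field degrees.
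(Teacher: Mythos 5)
Your argument is correct and is essentially the paper's own proof in modern dress: Ostrowski likewise extracts a maximal algebraically independent subset of the $k_i$, reduces everything to showing that the field generated by all the $k_i$ is \emph{finite} over the purely transcendental field $E$ spanned by that subset, and obtains the uniform degree bound $[M:E]=[ME':E']\leq [F:E']$ via exactly your linear-disjointness step, which he proves by hand by clearing denominators and comparing coefficients of monomials in a complementary transcendence basis (this is where his irreducibility hypothesis on $(x_1,\ldots,x_c,y_1,\ldots,y_{b-c})$ is used). The only cosmetic differences are that he splits off the all-algebraic case and finishes with the primitive element theorem instead of your polynomial-expression argument for locating finitely many $k_{j}$ that generate.
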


This proof requires us to check that certain eliminations can be carried out, which would prove very inconvenient if done directly using general elimination theory. However, these difficulties can be circumvented by invoking certain observations from field theory. In doing so we will rely on a few theorems from the fundamental work of E. Steinitz: \textit{Algebraic Theory of Fields}
\footnote{Crelles Journal for Mathematics, 137, (1910), pp. 167-309}, in particular from \S 22 of Steinitz's work.\\

Consider a field $ \mathscr {R} $ as the base field and in what follows only consider elements that are contained in a field $ \mathscr {Q} $ in which $\mathscr{R}$ is fixed (an ``extension'' of $ \mathscr {R } $). An element $ a $ contained in $ \mathscr {Q} $ is called \textit{algebraic} with respect to $ \mathscr {R} $ if it is the root of a single variable polynomial with coefficients from $ \mathscr {R} $. If $ \mathscr {G} $ is a system of elements, then an element $a$ is called \textit{algebraically dependent} on $ \mathscr {G} $ (with respect to $ \mathscr {R} $), if $a$ is algebraic with respect to the field $ \mathscr {R} (\mathscr {G}) $, whose construction results from adjoining the elements of $(\mathscr{G})$ to $ \mathscr {R} $. A system of elements $ \mathscr {G}'$ is called algebraically dependent on $ \mathscr {G} $ if every element contained in $ \mathscr {G}'$ is algebraically dependent on $ \mathscr {G} $. The following theorem applies: If $ \mathscr {G} _3 $ algebraically depends on $ \mathscr {G} _2 $, and $ \mathscr {G} _2 $ on $ \mathscr {G} _1 $, it follows that $ \mathscr {G} _3 $ algebraically depends on $ \mathscr {G} _1 $. Therefore we are justified in calling two systems $ \mathscr {G} _1 $ and $ \mathscr {G} _2 $ \textit{equivalent} if each algebraically depends on the other. A system $ \mathscr {G} $, which is not equivalent to any proper subset of itself, and does not consist of any elements that are algebraic with respect to $ \mathscr {R} $, is called \textit{irreducible}. We will make use of the following theorems:
\begin{enumerate}
   \item\label{item:fieldthm1} \textit{An irreducible system $ \mathscr {B} $, which algebraically depends on a finite system $ \mathscr {U} $ cannot contain more elements than $ \mathscr {U} $}\footnote {1. c. P. 292}.
    \item\label{item:fieldthm2} \textit{Let $ \mathscr {U} $ and $ \mathscr {B} $ be finite irreducible systems consisting of $ m $ and $ n $ elements respectively, and let $ \mathscr {B} $ be algebraically dependent on $ \mathscr {U}$. Then it follows that the systems $ \mathscr {U} $ and $ \mathscr {B} $ are equivalent when $m=n$, and, in the case $n<m$, that $ \mathscr{U} $ is equivalent to an irreducible system consisting of $ \mathscr {B} $ and $ m-n $ elements of $ \mathscr {U} $}\footnote {1. c. Pp. 291-292}.
\end{enumerate}

Before we go any further, let us make note of a consequence of \ref{item:fieldthm1} which we have already applied and will repeatedly use in the following sections.
\begin{lemma}
Between any $ n + 1 $ algebraic functions
$$ f_1 (x_1, \ldots, x_n), \ldots, f_ {n + 1} (x_1, \ldots, x_n) $$
of the $ n $ variables $ x_1, \ldots, x_n $,
with coefficients from a field $ K $ one can always establish an algebraic relation with coefficients from $ K $, i.e. there exists a polynomial $ \Phi (y_1, \ldots, y_ {n + 1}) $ with coefficients from $ K $, not all $ 0 $, such that 
$$ \Phi (f_1 (x_1, \ldots, x_n), f_2 (x_1, \ldots, x_n), \ldots, f_ {n + 1} (x_1, \ldots, x_n))=0$$
for indefinite $ x $\footnote {It follows immediately that $ f_1, \ldots, f_n $ are also dependent on $ x_1, \ldots, x_n $ in the usual sense as analytic functions.}.
\end{lemma}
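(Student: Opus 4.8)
The plan is to derive the lemma directly from the field-theoretic fact \ref{item:fieldthm1}, treating algebraic dependence of functions as algebraic dependence over the base field $\mathscr{R} = K$. First I would fix an ambient extension $\mathscr{Q}$ of $K$ containing the coordinate functions $x_1,\ldots,x_n$ together with all of $f_1,\ldots,f_{n+1}$ (for instance a suitable field of functions on their common domain), and take $\mathscr{U} = \{x_1,\ldots,x_n\}$ as the finite system on which everything will be shown to depend. By hypothesis each $f_i$ satisfies a one-variable polynomial equation whose coefficients are polynomials in $x_1,\ldots,x_n$ over $K$; hence each $f_i$ is algebraic over $K(x_1,\ldots,x_n) = \mathscr{R}(\mathscr{U})$, so the system $\mathscr{G} = \{f_1,\ldots,f_{n+1}\}$ is algebraically dependent on $\mathscr{U}$.

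Next I would invoke \ref{item:fieldthm1}. Since $\mathscr{U}$ is finite with exactly $n$ elements, any irreducible system that is algebraically dependent on $\mathscr{U}$ can contain at most $n$ elements. But $\mathscr{G}$ has $n+1$ elements, so $\mathscr{G}$ cannot be irreducible. Unwinding the definition of irreducibility, this forces one of two alternatives: either some $f_j$ is already algebraic with respect to $\mathscr{R} = K$, or $\mathscr{G}$ is equivalent to a proper subset of itself.

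I would then turn each alternative into the desired relation. In the first case, an algebraic equation over $K$ satisfied by a single $f_j$ is itself a relation $\Phi(y_1,\ldots,y_{n+1}) = 0$ (with $\Phi$ depending only on $y_j$), and we are done. In the second case, equivalence with a proper subset means some $f_j$ is algebraically dependent on the remaining functions, i.e. $f_j$ is algebraic over $K(f_1,\ldots,\widehat{f_j},\ldots,f_{n+1})$; writing its minimal polynomial over that field and clearing denominators produces a polynomial $\Phi(y_1,\ldots,y_{n+1})$ with coefficients in $K$, not all zero, satisfying $\Phi(f_1,\ldots,f_{n+1}) = 0$ identically in the $x_i$.

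The step requiring the most care is not the logic but the bookkeeping around the definition of \emph{irreducible}: I must confirm that the failure of irreducibility genuinely yields a nontrivial relation in the $f$'s (rather than a vacuous statement), and that clearing denominators in the second case does not annihilate $\Phi$. Because the minimal polynomial of $f_j$ over the subfield generated by the other functions has nonzero leading coefficient, this is guaranteed, and the resulting $\Phi$ is exactly the relation asserted by the lemma.
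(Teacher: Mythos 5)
Your proposal is correct and follows essentially the same route as the paper: apply Steinitz's counting theorem to conclude that the system $f_1,\ldots,f_{n+1}$, being algebraically dependent on the $n$ quantities $x_1,\ldots,x_n$, cannot be irreducible, and then extract the relation by writing the algebraic equation for one $f_j$ over the field generated by the others and clearing denominators. The only difference is cosmetic: you explicitly separate out the case where some $f_j$ is already algebraic over $K$, whereas the paper passes directly to ``equivalent to a proper subset,'' which subsumes that case.
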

Indeed, the system $ \mathscr {B} $ of $ n + 1 $ quantities $ f_1 (x_i), \ldots, f_ {n + 1} (x_i) $ depends algebraically on the system $ \mathscr {U} $ of the n quantities $ x_1, \ldots, x_n $ with respect to $ K $, and so therefore cannot be irreducible by result \label{item:fieldthm1}. above, given that it contains more than $ n $ quantities. It is therefore equivalent to a proper subset of itself, say
\begin{align}
f_1 (x_i), \ldots, f_ {n} (x_i).
\end {align}
Consequently, $ f_ {n + 1} (x_i) $ algebraically depends on the system (28), meaning it satisfies an equation
$$ \Phi (z, f_1, \ldots, f_n) \equiv a_0 (f_k) z ^ l + a_1 (f_k) z ^ {l-1} + \ldots + a_l (f_k) = 0, $$
in which $ a_0 (f_k), a_1 (f_k), \ldots, a_l (f_k) $ are elements from the field $ K (f_1, \ldots, f_n) $. We can now assume them to be polynomials in $ f_1, \ldots, f_n $ by multiplying $ \Phi $ with a polynomial in $ f_1, \ldots, f_n $ if necessary.
However, if $ a_0, \ldots, a_l $ are polynomials in $ f_k $, then the polynomial $ \Phi (y_1, y_2, \ldots, y_ {n + 1}) $ has the property asserted in the lemma.\\

We now return to the proof of \hyperlink{Proposition 4}{Proposition 4} and assume that the system $ (k_i) $ possesses a finite basis consisting of the numbers $ \kappa_1, \kappa_2, \ldots, \kappa_a $. 
We will use $R$ to denote the field of rational numbers and let it take the place of the field $ \mathscr {R} $ of Steinitz's theorems in what follows.
If all numbers $ \kappa $ are algebraic, then the field $ R (k_i) $ is contained in the finite extension $ R (\kappa_i) $ and therefore corresponds to one of its divisors, to be denoted as $ R_1 $.
Say $ \alpha $ is a primitive element of $ R_1 $; then $ \alpha $ must admit an expression as a finite linear combination of $ k_i $  with rational coefficients, e.g. in terms of $ k_1, k_2, \ldots, k_r $.
But since every $ k_i $ may also be expressed in terms of $ \alpha $ using rational coefficients, the numbers $ k_1, k_2, \ldots, k_r $ form a finite basis for the system $ (k_i) $ - but if not all $ \kappa_i $ are algebraic, then $ \mathscr {G}=(\kappa_i) $ must either be irreducible itself or equivalent to an irreducible subsystem $ \sum $, which consists of $b$ numbers, say $\kappa_1,\kappa_2,...,\kappa_b$.  Therefore the remaining $\kappa_i$ are algebraic with respect to $R(\kappa_1,...,\kappa_b)$, provided $a \neq b$.\\

On the other hand, the system $ (k_i) $ must also be equivalent to an irreducible system consisting of finitely many numbers $ k_i $, as long as not all $ k_i $ are algebraic. For if $ T = (k_1, k_2, \ldots, k_c) $ is an irreducible subsystem of $ k_i $, then $ T $ must be algebraically dependent on $ \sum $, and therefore by \ref{item:fieldthm1} $ c \leq b $. However, if $ T $ is chosen such that $c>0$ be as large as possible, then every $ k_i $ must be algebraically dependent on $ T $. This is the case since the system $ (T, k_i) $ consists of $ c + 1 $ elements and must therefore be equivalent to an irreducible subsystem $ T '$, with $ k_i $ being algebraically dependent on it. On the other hand, we have that $ T $ is algebraically dependent on $ (T, k_i) $, hence also on $ T '$. Thus, by \ref{item:fieldthm1}, the number of elements in $ T '$ must be equal to $ c $, and by \label{item:fieldthm2} $ T $ and $ T' $ must be equivalent. Therefore $ k_i $ is also algebraically dependent on $ T $, and $ T $ is equivalent to $ (k_i) $.\\

If not all $ k_i $ are algebraic, the system $ T $ is algebraically dependent on $ \sum $. Therefore \label{item:fieldthm2} may be applied, and the system $ \sum $ is equivalent to a system of the form ($ k_1, k_2, \ldots, k_c, y_1, y_2, \ldots, y_ {b-c} $). We will prefer to use $ x_1, x_2, \ldots, x_c $ to denote $ k_1, k_2, \ldots, k_c $, whereby these quantities need not appear at all if the $ k_i $ are all algebraic numbers, which is to say that $c=0$.
If we adjoin all $ \kappa_i $ to the field $R (x_1, x_2, \ldots, x_c, y_1, y_2, \ldots, y_ {b-c}) $, we obtain a finite extension $K$, which contains all quantities $k_i$. However, the $ k_i $ are already algebraic with respect to $ R (x_1, \ldots, x_c)$. If $ b = c $, then none of the $y$ exist, and all $ k_i $ are contained in a finite extension of $ R (x_1, \ldots, x_c) $, meaning they themselves determine a finite extension over the same field, whose primitive element we will call $ \beta $. Then $ \beta $ can be expressed rationally using finitely many $ k_i $ and $ x_1, \ldots, x_c $ with rational coefficients. On the other hand, all $ k_i $ can be expressed rationally using $ \beta $ and $ x_1, \ldots, x_c $ with rational coefficients. And since $ x_1, \ldots, x_c $ are themselves among the $ k_i $, Proposition 4 has been proven for this case.\\

Next, let $ b>c $. If we adjoin all $ k_i $ to the field $R(x_1,...,x_c)$ or, if $ c = 0 $, to the field $ R $, then we obtain an algebraic field $ K $ over $ R (x_1, \ldots, x_c) $, and it suffices to prove that $ K $ is finite with respect to $ R (x_1, \ldots, x_c) $. This owes to the fact that if true, then all $ k_i $ can be expressed rationally using a primitive element $ \gamma $ of $ K $ and $ x_1, \ldots, x_c $ with rational coefficients, and since $ x_1, \ldots, x_c $ are included among the $ k_i $, Proposition 4 follows. However, if the degree of $ K $ with respect to $ R (x_1, \ldots, x_c, y_1, \ldots, y_ {b-c}) $ is $m$, we claim that the degree of $ K $ with respect to  $ R (x_1, \ldots, x_c) $ is at most $ m $, that is to say that for any $m+1$ elements of $ K $
\begin{align}
k ', k'', \ldots, k ^{(m + 1)},
\end{align}
there is a relation between them with $ u_t (x_i) $ not all vanishing:
\begin{align}
u_1 (x_i) k'+ u_2 (x_i) k''+ \ldots + u_ {m + 1} (x_i) k ^ {(m + 1)} = 0.
\end{align}
The coefficients $ u_t (x_i) $ are $ R (x_1, \ldots, x_c) $-valued rational functions of the $ x $ with rational coefficients, that is to say rational numbers if $ c = 0 $. Since the degree of $ K $ equals $ m $ and the elements in (29) belong to the field $ K $, the relation
\begin{align}
v_1 (x_i, y_i) k'+ v_2 (x_i, y_i) k''+ \ldots + v_{m + 1} x_i, y_i) k^{(m + 1)} = 0,
\end{align}
holds, where $ v_i (x_i, y_i) $ are rational functions in the $ x_i, y_i $ with rational coefficients, not all vanishing. We can obviously assume all $ v_t $ to be polynomials in the $ x_i, y_i $. If the $ y_i $ do not show up at all in these expressions, our claim follows. But if the $ v $ also contain certain $ y $, then we rearrange (31) purely formally according to the products of powers of the $ y $. Thus we get an equation from the form
\begin{align}
\sum_j Y_j (w_1 ^ {(j)} (x_i) k'+ w_2 ^ {(j)} (x_i) k''+ \ldots + w_ {m + 1} ^ {(j)} (x_i) k ^ {(m + 1)}) = 0,
\end {align}
in which not all $ w_i ^ {(j)} (x_i) $ vanish. If there is no relation of the form (30), then the coefficients of various products of powers of the $y$ in (32) are not all $0$. If we divide (32) by the greatest common divisor of all products of powers $ Y_j $, then at least one $ y $ must actually occur in (32) following this division, since otherwise only a single $ Y $ would appear in (32) before we divide, which in turn would yield an equation of the form (30). Thus, there exists an identically vanishing polynomial in $ y_1, \ldots, y_ {b-c} $, whose coefficients are not all 0 and algebraically dependent on $ x_1, \ldots, x_c $. But this contradicts the assumption that the system $ x_1, \ldots, x_c $, $ y_1, \ldots, y_ {b-c} $ is irreducible. With that Proposition 4 is proven in its entirety\footnote {
The fact proved in the last part of the proof is essentially identical to a proposition proven by Mr. I. Schur, Berlin proceedings report. (1911), p. 619 ff.
}.\\

This proposition produces the following stronger version of \hyperlink{Theorem 8}{Theorem 8}:
\begin{corollary}
If there is a subsystem $ \mathscr {G} $ of the coefficients $ a_i $ of a Dirichlet series
\begin{align}
\sum a_i e ^ {- \lambda_i s},
\end {align}
whose numbers cannot be expressed rationally (with rational coefficients) using finitely many numbers belonging to the same subsystem, then this series cannot satisfy any algebraic differential equation.
\end{corollary}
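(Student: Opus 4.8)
The plan is to reduce the corollary to \hyperlink{Theorem 8}{Theorem 8} by showing that the stated hypothesis on the subsystem $\mathscr{G}$ already forces the full coefficient system $(a_i)$ to have no finite basis. The key tool will be \hyperlink{Proposition 4}{Proposition 4}, used in its contrapositive form.

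First I would unpack the hypothesis. To say that the numbers of $\mathscr{G}$ cannot be expressed rationally (with rational coefficients) in terms of finitely many numbers drawn \emph{from $\mathscr{G}$ itself} is precisely to say that $\mathscr{G}$ admits no finite basis whose basis elements lie in $\mathscr{G}$. The contrapositive of \hyperlink{Proposition 4}{Proposition 4} then yields immediately that $\mathscr{G}$ admits no finite basis at all: for if $\mathscr{G}$ possessed some finite basis $\kappa_1, \ldots, \kappa_r$, Proposition 4 would furnish one consisting of elements of $\mathscr{G}$, contradicting the hypothesis.

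Next I would pass from $\mathscr{G}$ to the whole coefficient system $(a_i)$. Since $\mathscr{G}$ is a subsystem of $(a_i)$, any finite basis $\kappa_1, \ldots, \kappa_r$ for $(a_i)$ would in particular express every $a_i$ as a rational function of the $\kappa_j$ with rational coefficients, and hence would express every element of $\mathscr{G}$ as well, these being among the $a_i$. Such a basis would therefore also serve as a finite basis for $\mathscr{G}$, which we have just ruled out. Consequently the system $(a_i)$ itself can have no finite basis.

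Finally I would invoke \hyperlink{Theorem 8}{Theorem 8}, according to which a Dirichlet series whose coefficients have no finite basis cannot satisfy an algebraic differential equation; applying it to $\sum a_i e^{-\lambda_i s}$ completes the proof. I do not anticipate any genuine obstacle here beyond correctly matching the phrasing of the hypothesis to the formal definition of a finite basis and deploying the contrapositive of Proposition 4 in the right direction; the entire argument is a short chain of logical implications involving no computation, and its whole content is really the observation that the hard analytic work has already been done in Proposition 4 and Theorem 8.
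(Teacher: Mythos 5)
Your proposal is correct and matches the paper's intent exactly: the corollary is stated there as an immediate consequence of Proposition 4 combined with Theorem 8, with precisely the chain you describe (no finite basis within $\mathscr{G}$ $\Rightarrow$ no finite basis for $\mathscr{G}$ at all by Proposition 4 $\Rightarrow$ no finite basis for the full system $(a_i)$ $\Rightarrow$ Theorem 8 applies). Nothing is missing.
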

In particular, (33) does not satisfy an algebraic differential equation if the numbers $ a_i $ contain an infinite number of algebraic numbers not contained in the same finite field extension, for instance an infinite number of distinct roots of unity.

\newpage

\section{On algebraic-transcendent functions.}

In order to expand on the results established in the previous sections and subsequently apply them to power series, we must first delve into general properties of single variable functions that satisfy algebraic differential equations. Following E.H. Moore, we will refer to functions satisfying an algebraic differential equation as \textit{algebraic-transcendent functions}\footnote {Math. Ann. \textbf {48} (1897) p. 49. Mathematical journal. VIII.}.\\

In this section, a \textit{domain of rationality} $ R $, will be understood to mean a field of analytic functions sharing a common domain $ G(R) $, on which each of them is regularly analytic except at isolated points. In addition, we will assume that for every function in $R$, all of its derivatives are also elements of $R$. If a function $ f (x) $ is defined on $ G (R) $ and satisfies a differential equation
$$ \Phi (f ^ {(i)}) = 0 $$
where $ \Phi $ is a polynomial in $ f $ and its derivatives whose coefficients are elements in $ R $, we will say that it is \textit{algebraic-transcendent with respect to $ R $}. \\

Let $ g (x) $ be an algebraic-transcendent function with respect to $ R $ and let $ f (x) $ an algebraic-transcendent function with respect to the domain of rationality $ R (g) $, which results from adjoining $g$ as well as all its derivatives to $R$. We claim \textit{that $ f (x) $ is also algebraic-transcendent with respect to $ R $}. Let $ g (x) $ satisfy a differential equation $ m $-th order
\begin{align}
 \Phi  (g ^{(i)}) = 0,
\end{align}
whose left-hand side is a polynomial in $ g, g', \ldots, g^{(m)} $ with coefficients from $ R $.  Moreover, suppose $g$ satisfies no such differential equations of lower order or of the same order and of lower degree in $ g ^ {( m)} $. Then, by repeated differentiation of equation (34), we may represent each of the higher order derivatives $ g ^ {(m + 1)}, g ^ {(m + 2)}, \ldots $ rationally using  $ g, g ', \ldots, g ^ {(m)} $ and coefficients from $ R $, where the denominator contains a power of $ S = \frac {\partial \Phi} {\partial g ^ {(m)}} $ , with this last expression clearly not vanishing identically. Now suppose $ f (x) $ satisfies a differential equation of $ n $-th order

\begin{align} \psi (f ^ {(i)}) = 0, 
\end{align}
where $\psi (f ^ {(i)}) $ is a polynomial in $ f, f', \ldots, f ^ {(n)} $ with coefficients from $ R (g) $, and satisfies no equations of lower order or $n$-th order and of lesser degree in $ f ^ {(n)} $. Therefore $ T = \frac {\partial \psi} {\partial f ^ {(n)}} $ is certainly non-zero. By repeated differentiation of equation (35) one can represent all higher order derivatives $ f ^ {(n + 1)}, f ^ {(n + 2)}, \ldots $ rationally using $ f, f', \ldots, f ^ {(n)} $ and $ g, g', \ldots $, where only a power of $T$ occurs as the denominator. If one substitutes $ g ^ {(m + 1)}, g ^ {(m + 2)}, \ldots $ for the expressions we obtained above, one can represent all derivatives $ f ^ {(n + 1)}, f ^ {(n + 2)}, \ldots $ rationally using $ f, f ', \ldots, f ^ {(n)} $, $ g, g', \ldots, g ^ {(m)} $ with coefficients from $ R $, where the denominator consists only of products of powers of $T$ and $S$. The subsequent application of Lemma \ref{lemma:sec3} from \S3 proves the existence of a rational relation with coefficients from $R$ between a sufficiently large number of derivatives of $ f $. This proves our claim. \\

Next, let $ g_1, g_2, \ldots, g_k $ be algebraic-transcendent function with respect to $ R $, and $ f $ be algebraic-transcendent with respect to $ R (g_1, g_2, \ldots, g_k) $. Then, by the just proven claim, $ f $ is algebraic-transcendent with respect to $ R(g_2, g_3, \ldots, g_k) $, hence also with respect to $ R (g_3, \ldots, g_k) $ etc., hence also with respect to $ R $ itself. It thus follows that:

\hypertarget{Theorem 9}{}
\begin{theorem}
If an analytic function $ f (x) $ satisfies a differential equation
$$ \Phi (f ^ {(i)}) = 0, $$
whose left-hand side is a polynomial in $ f, f', f'', \ldots $, with coefficients that are algebraic-transcendent with respect to a domain of rationality $ R $, then $ f (x) $ is itself algebraic-transcendent with respect to to $ R $, provided $ f (x) $ is defined on the domain of definition for $ R $.
\end{theorem}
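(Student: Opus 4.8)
The plan is to recognize that Theorem 9 is an almost immediate consequence of the multi-function composition claim established in the paragraph immediately preceding its statement. First I would exploit the fact that $\Phi$ is a \emph{polynomial} in $f, f', f'', \ldots$, so it has only finitely many coefficients; I would name these $g_1, g_2, \ldots, g_k$. By hypothesis each $g_j$ is algebraic-transcendent with respect to $R$, and there are only finitely many of them — this finiteness is the only genuinely new observation needed beyond the preceding claims.

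Next I would form the domain of rationality $R(g_1, g_2, \ldots, g_k)$ obtained by adjoining $g_1, \ldots, g_k$ together with all their derivatives to $R$. Since every coefficient of $\Phi$ lies in this field, the given equation $\Phi(f^{(i)}) = 0$ exhibits $f$ as satisfying a differential equation that is a polynomial in $f, f', \ldots$ with coefficients drawn from $R(g_1, \ldots, g_k)$; in other words, $f$ is algebraic-transcendent with respect to $R(g_1, \ldots, g_k)$. At this point both hypotheses of the preceding multi-function claim are in force: the functions $g_1, \ldots, g_k$ are algebraic-transcendent over $R$, and $f$ is algebraic-transcendent over $R(g_1, \ldots, g_k)$. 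Invoking that claim directly yields that $f$ is algebraic-transcendent with respect to $R$, which is exactly the assertion of the theorem (the proviso that $f$ be defined on $G(R)$ simply guarantees that the composition of domains is meaningful, and I would carry it along without further comment).

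The main obstacle in this line of reasoning has in fact already been overcome \emph{before} the theorem is stated. It resides in the single-function claim — that an algebraic-transcendent adjunction $R \rightsquigarrow R(g)$ can be stripped away — whose proof represents the higher derivatives $f^{(n+1)}, f^{(n+2)}, \ldots$ rationally in terms of $f, \ldots, f^{(n)}$ and $g, \ldots, g^{(m)}$ (with denominators only powers of $S$ and $T$) and then applies the elimination of the algebraic lemma of \S3 to produce a rational relation among sufficiently many derivatives of $f$ with coefficients from $R$. Given that claim and its iteration down the chain $R(g_1,\ldots,g_k) \to R(g_2,\ldots,g_k) \to \cdots \to R$, the step remaining for Theorem 9 is purely the bookkeeping remark that a polynomial differential operator contributes only finitely many coefficient functions, each eligible to play the role of one of the $g_j$. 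I would therefore present the proof in a few lines, citing the established claim rather than re-deriving the elimination.
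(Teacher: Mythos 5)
Your proposal is correct and follows the paper's own route exactly: Ostrowski derives Theorem 9 precisely by taking the finitely many coefficients $g_1,\ldots,g_k$ of $\Phi$, observing that $f$ is algebraic-transcendent with respect to $R(g_1,\ldots,g_k)$, and then stripping the adjoined functions away one at a time via the single-adjunction claim (itself proved by the rational representation of higher derivatives with denominators powers of $S$ and $T$ plus the elimination lemma of \S3). You have correctly located the real work in the preceding claim and reduced the theorem to the finiteness bookkeeping, which is all the paper does as well.
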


This last detail is necessary because $ f (x) $ could be separated from $ G (R) $ by a natural boundary, while the common domain of existence for the coefficients of $ \Phi $ could simultaneously contain $ G (R) $ as well as the domain where $ f (x) $ is defined. \\

But additionally, it follows from the observation above \textit{that every rational function of algebraic-transcendent functions with respect to $ R $ is itself algebraic-transcendent with respect to $ R $}\footnote {Result due to Stadigh (1. c. P. 5)}.
And more generally, the same is true for every \textit{algebraic} function of algebraic-transcendent functions with respect to $R$.\\

We now specialize  by letting $R$ be the field of all constants, such that from now on we will only consider algebraic-transcendent functions pure and simple. Let $ g (x) $ and $ f (x) $ be algebraic-transcendent functions, and let the range of $ g (x) $ and the domain of $ f (x) $ have a region in common. Let us consider the function $ f (g (x)) = \varphi (x) $. If $ g (x) $ is not constant, the functions $ f ^ {(i)} (g (x)) $, which are the result of plugging $g(x)$ into the derivatives of $ f (x) $, can be represented rationally using derivatives of $\varphi(x)$ and $g(x)$, where the denominator consists only of a power of $ g'(x) $. 
Thus, by \hyperlink{Proposition 1}{Proposition 1}, $ f (x) $ satisfies an algebraic differential equation with constant coefficients not explicitly depending on $ x $. Considering this equation and replace $ f (x), f'(x), f''(x), \ldots $ with the expressions for $ f (g (x)), f' (g (x)), f''(g (x)), \ldots $ in terms of the derivatives of $ \varphi (x) $ and of $ g (x) $.  This substitution gives rise to a rational relation between the derivatives of $ \varphi (x) $ and $ g (x) $, which does not hold identically since for $ g (x) = x $, $ \varphi (x) = f (x) $, it turns into our initial differential equation for $ f (x) $ again. From \hyperlink{Theorem 9}{Theorem 9}, it therefore follows that $ \varphi (x) = f (g (x)) $ \textit{itself is an algebraic-transcendent function}. \\

This fact gives us the opportunity to construct analytic functions of several variables, which have the property that if one takes the variables to be algebraic-transcendent functions of $ x $, we once again have an algebraic transcendent function. Say we construct an arbitrary algebraic function using algebraic-transcendent functions of arbitrary variables $ x, y, z, \ldots $. Now plug functions obtained this way into arbitrary algebraic-transcendent functions as variables, from these functions consider any number of algebraic functions, etc. That is to say, we are repeating the process which in \S3 led to the construction of improper functions of two variables, with the only difference being that instead of combining \textit{arbitrary analytic} functions of one variable and algebraic functions of two variables, we use\textit{ algebraic-transcendent functions} of one variable and algebraic functions of \textit{several} variables. Every such function $ f (x, y, z, \ldots) $ has, according to our theorems, the property that $ f(\varphi (x) , \psi (x), \chi (x), \ldots) $ becomes algebraic-transcendent as soon $ \varphi (x), \psi (x), \chi (x), \ldots $ themselves are. In the next section we shall construct the most general kind of function possessing this property. We also observe that if such a function $ f (x, y, z, \ldots) $ can be expanded in terms of positive integer powers of $ x, y, z, \ldots $, especially as the Dirichlet series
$$ f (e ^ {- \lambda_1s}, e ^ {- \lambda_2s}, \ldots) $$
for positive $ \lambda_1, \lambda_2, \ldots $, it is also an algebraic-transcendent function. \\

From every power series
$$ f (x) = a_0 + a_1x + a_2x ^ 2 + \ldots, $$
which converges in a neighborhood of the origin and represents an algebraic-transcendent function, a Dirichlet series results from the substitution $ x = e ^ {- s} $ which (possesses a domain of absolute convergence and in that domain) represents an algebraic-transcendent function. We can therefore adapt \hyperlink{Theorem 7}{Theorem 7} and \hyperlink{Corollary 1}{Corollary 1} to ordinary power series. From Corollary 1 we find

\hypertarget{Theorem 10}{}
\begin{theorem}
If for a power series
$$ f (x) = a_0 + a_1 x + a_2 x ^ 2 + \ldots, $$
converging in a neighborhood of the origin, one can find a subsystem of coefficients whose elements cannot be represented rationally by a finite number of them, with rational coefficients, then the function $ f (x) $ represented by this power series is not algebraic-transcendent.
\end{theorem}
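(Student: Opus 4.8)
The plan is to argue by contradiction, transferring the problem from a power series to a Dirichlet series so that Corollary 1 applies directly. Suppose $f(x)$ were algebraic-transcendent. The substitution $x = e^{-s}$ turns $f$ into $f(e^{-s}) = \sum_n a_n e^{-ns}$, a Dirichlet series with exponents $\lambda_n = n$ and with coefficients that are \emph{exactly} the power-series coefficients $a_n$. Since the hypothesis of the theorem is a statement purely about these coefficients, it transfers verbatim from the power series to the Dirichlet series, and the whole argument reduces to chaining together two results already in hand.

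First I would observe that $e^{-s}$ is itself algebraic-transcendent, satisfying the first-order algebraic differential equation $y' + y = 0$ with constant coefficients. Since $f$ is assumed algebraic-transcendent, the composition theorem established immediately before the statement --- that $f(g(s))$ is algebraic-transcendent whenever $f$ and $g$ are --- shows that $f(e^{-s})$ is algebraic-transcendent, i.e.\ satisfies an algebraic differential equation with constant coefficients. Next I would invoke the remark preceding the theorem: convergence of $\sum a_n x^n$ in a neighborhood of the origin forces $\sum a_n e^{-ns}$ to converge absolutely in a half-plane $\mathrm{Re}(s) > \sigma_0$ and to represent $f(e^{-s})$ there. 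Finally, the hypothesis supplies a subsystem of the $a_n$ none of whose members is rationally expressible (with rational coefficients) in terms of finitely many numbers of that same subsystem; this is precisely the hypothesis of Corollary 1 for the Dirichlet series $\sum a_n e^{-ns}$. Corollary 1 then asserts that this Dirichlet series satisfies no algebraic differential equation, contradicting the previous step. Hence $f$ cannot be algebraic-transcendent.

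The argument is short because all the real work is already done: Corollary 1 carries the arithmetic obstruction, and the composition theorem carries the analytic reduction. The only points that genuinely require attention are bookkeeping rather than new ideas. One is confirming that the domain of absolute convergence transfers correctly --- absolute convergence of $\sum |a_n| r^n$ for each $r < \rho$ is the same as absolute convergence of $\sum |a_n| e^{-n\sigma}$ for each $\sigma > \log(1/\rho)$, so the half-plane of absolute convergence demanded by Corollary 1 is automatic. The other, and subtlest, point is making sure the \emph{coefficients} of the two series coincide index-for-index, so that the arithmetic hypothesis on $\{a_n\}$ is literally the hypothesis Corollary 1 requires; this is immediate since $x = e^{-s}$ sends $x^n$ to $e^{-ns}$ without mixing coefficients, but it is the one place where a different substitution (or exponents not equal to $n$) could break the correspondence and where I would take care to spell out the identification explicitly.
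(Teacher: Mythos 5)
Your argument is exactly the paper's: Ostrowski obtains Theorem 10 by substituting $x = e^{-s}$, invoking the composition result of \S5 (with $e^{-s}$ algebraic-transcendent) to conclude that $f(e^{-s}) = \sum a_n e^{-ns}$ is an algebraic-transcendent Dirichlet series with a domain of absolute convergence, and then applying Corollary 1 to the unchanged coefficient system $\{a_n\}$. Your proposal fills in the same chain of reductions, so it is correct and takes essentially the same approach.
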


This means, for example, that the series
\begin{align} \zeta (x, \nu) = \frac {x} {1 ^ \nu} + \frac {x ^ 2} {2 ^ \nu} + \frac {x ^ 3} {3 ^ \nu} + \ldots \end {align}
does not satisfy any algebraic differential equation (as a function of $x$) for any $\nu$ a non-integer rational number. 
This follows from the observation that the algebraic numbers $ 2 ^ \nu, 3 ^ \nu, \ldots $ cannot all be found in a finite field extension, since otherwise, for instance, the discriminant of this number field would have to be divisible by the discriminant of each sub-field, i.e., as per Landsberg's theorems\footnote {Crelles Journal for Mathematics, \textbf {117} (1987), pp. 140 ff.}, every prime number.
For $ \nu $ an integer this series always satisfies an algebraic differential equation. For irrational $ \nu $ it is presumably never the case, but I have been unable to prove it.
From the theorems of the next section it follows, however, that the set of $ \nu $, for which function (36) is algebraic-transcendent, is countable.
On the other hand, as Mr. G. Pólya brought to my attention, and with his friendly permission I would like to point out here, is that it is easy to show that $ \zeta (x, \nu) $ is not algebraic for irrational $ \nu $ , if one considers the behavior of $ \zeta (x, \nu) $ at $ x = 1 $. 
Using the functional equation, it suffices to take $ 0 <\nu <1 $.
But then (see E. Picard, Traité d'Analyse, \textbf {1.} (2 éd.), P. 230) $ \lim_ {x \to 1} (1-x) ^ {1 - \nu} \zeta (x, \nu) $ equals $ \Gamma (1- \nu) \neq0 $, as $ x $ approaches the points $x = 1$ radially from inside the unit circle, that is to say $ \zeta (x, \nu) $ becomes infinite for $ x = 1 $ of an irrational order. \\

From \hyperlink{Theorem 7}{Theorem 7} it follows that a power series
$$ a_1x ^ {n_1} + a_2x ^ {n_2} + \ldots, $$
in which the actually occurring non-negative integer exponents $ n_1, n_2, \ldots $ grow so rapidly that $ \overline{\lim_{i \to \infty}} \frac {n_i}{n_{i-1}} = \infty $, cannot satisfy an  algebraic differential equation. 
This result is well known and is due to Grönwall\footnote {Öfversigt af Vetensk. Akademiens Förhandlingar, \textbf {55} (Stochholm, 1898). P. 387 ff.}. 
The following theorem may also be deduced from it, which is analogous to a well-known theorem by G. Pólya about non-continuable power series\footnote {Acta Mathematica, \textbf {40} (1916), pp. 179-183.}:

\hypertarget{Theorem 11}{}
\begin{theorem}
From every infinite power series converging in a neighborhood of the origin, one can obtain infinitely many power series that do not represent any algebraic-transcendent function by multiplying certain coefficients by - 1 
\end{theorem}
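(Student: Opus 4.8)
The plan is to exploit the single algebraic operation under which the class of algebraic-transcendent functions is closed and which interacts cleanly with sign changes, namely \emph{differences}. A sign flip alters neither which exponents occur nor the field generated by the coefficients, so neither the gap criterion nor \hyperlink{Theorem 10}{Theorem 10} can be applied to one flipped series by itself. Instead I would compare the flipped series \emph{to each other}: if two sign-flipped series were both algebraic-transcendent, then by the closure of algebraic-transcendent functions under rational (in particular linear) combinations recorded after \hyperlink{Theorem 9}{Theorem 9}, their difference would be algebraic-transcendent as well. But the difference is supported only on the exponents where the two sign patterns disagree, so if I force those disagreement sets to be lacunary, the difference will fall under the Grönwall gap theorem (the power-series form of \hyperlink{Theorem 7}{Theorem 7}) and therefore \emph{fail} to be algebraic-transcendent, yielding a contradiction.

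Concretely, write $f(x)=\sum_{i\ge 1} a_i x^{n_i}$ with all $a_i\neq 0$ and $n_1<n_2<\cdots\to\infty$. First I would extract a single super-lacunary index set $I=\{i_1<i_2<\cdots\}$, chosen greedily so that $n_{i_k}/n_{i_{k-1}}\to\infty$; this is possible precisely because $n_i\to\infty$. The elementary but decisive observation is that \emph{every infinite subset of $I$ is again lacunary}, since omitting intermediate terms only multiplies consecutive ratios together and hence cannot decrease them. I would then partition $I$ into infinitely many pairwise disjoint infinite subsets $T_1,T_2,\dots$ (via any bijection $I\leftrightarrow\mathbb{N}\times\mathbb{N}$), and for each $j$ let $g_j(x)$ be the series obtained from $f(x)$ by multiplying by $-1$ exactly the coefficients indexed by $T_j$. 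Each $g_j$ is a genuine sign-change of $f$ with the same radius of convergence, and since the $T_j$ are disjoint and nonempty the $g_j$ are pairwise distinct.

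For $j\neq j'$ the difference $g_j-g_{j'}$ has coefficients $\mp 2a_i$ on $T_j\cup T_{j'}$ and zero elsewhere, so it is a convergent power series with infinitely many nonvanishing terms whose exponents form an infinite subset of $I$; hence these exponents are lacunary with $\limsup_{k}$ of consecutive ratios equal to $\infty$. By the gap theorem $g_j-g_{j'}$ is not algebraic-transcendent, so $g_j$ and $g_{j'}$ cannot both be algebraic-transcendent. Consequently at most one index $j$ can yield an algebraic-transcendent $g_j$, and all the remaining, infinitely many, series $g_j$ are not algebraic-transcendent, which is exactly the assertion of the theorem.

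The step I expect to be the conceptual crux is the first one: recognizing that the right move is not to analyze an individual flipped series (where no earlier criterion bites, since the exponent set and the coefficient field are both unchanged), but to play the flipped series off against one another and let the gap theorem act on their \emph{differences}. Once that idea is in place the remaining work is routine, reducing to the purely combinatorial construction of one super-lacunary index set together with its partition into infinitely many infinite pieces whose pairwise unions inherit lacunarity.
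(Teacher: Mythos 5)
Your argument is correct and is essentially the argument of the paper: both rest on choosing the flipped exponents inside a super-lacunary index set so that the difference of two sign-variants is a gap series, which by the Gr\"onwall criterion (the power-series form of \hyperlink{Theorem 7}{Theorem 7}) cannot be algebraic-transcendent, while algebraic-transcendent functions are closed under linear combinations. The only cosmetic difference is that you compare the sign-variants pairwise, so that at most one of your infinitely many $g_j$ can be algebraic-transcendent, whereas the paper compares each flipped series against the original $P(x)$ and therefore must split into cases according to whether $P(x)$ itself (or some sign-variant of it) is algebraic-transcendent.
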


Indeed, let
$$ Q (x) = a_ {n_1} x ^ {n_1} + a_ {n_2} x ^ {n_2} + \ldots $$
be a power series constructed from certain terms of
$$ P (x) = a_0 + a_1 x + a_2 x ^ 2 + a_3 x ^ 3 + \ldots $$
in such a way that it is not an algebraic-transcendent function due to the growth of its exponents. If $ P (x) $ satisfies an algebraic differential equation, the power series $ P_1 (x) $ resulting from it by changing the signs of all terms occurring in $ Q (x) $ cannot satisfy any algebraic differential equation, otherwise the same would also apply to $ Q (x) = \frac{1}{2} (P (x) -P_1 (x)) $.
But if $ P (x) $ is not algebraically transcendent, and there is at least one algebraically transcendent power series among
$$ \pm a_0 \pm a_1x \pm a_xx ^ 2 \pm a_3 x ^ 3 \pm \ldots $$
we can apply our conclusion to it and choose $ Q (x) $ in an infinitely many different ways. - From a Dirichlet series
$$ \sum a_i e ^ {- \lambda_i s} $$
the substitution $ e ^ {- s} = x $ creates a series containing infinitely growing powers of $ x $
$$ \sum a_i x ^ {\lambda_i}, $$
in which the exponents $ \lambda_i $ no longer need to be integers, much less even rational. Since such series expansions appear quite often in the investigation of the behavior of integrals of ordinary differential equations, it is of interest to formulate the following consequence for such series arising from \hyperlink{Theorem 6}{Theorem 6}:

\hypertarget{Theorem 12}{}
\begin{theorem}
If the series
$$ \sum a_i x ^ {\lambda_i} $$
converges in a certain neighborhood of the origin
and the exponents $ \lambda_i $ are real numbers growing infinitely, having no finite linear basis, then the analytic function represented by this series cannot satisfy any algebraic differential equation.
\end{theorem}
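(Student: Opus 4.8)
The plan is to reduce the statement about power series in $x$ to the already-established result about Dirichlet series, namely \hyperlink{Theorem 6}{Theorem 6}. The bridge is the substitution $x = e^{-s}$, which was already signaled in the discussion preceding the theorem. Under this substitution each term $a_i x^{\lambda_i}$ becomes $a_i e^{-\lambda_i s}$, so the power series $\sum a_i x^{\lambda_i}$ is carried to the Dirichlet series $\sum a_i e^{-\lambda_i s}$, and a neighborhood of the origin in the $x$-variable (say $0 < |x| < r$) corresponds to a right half-plane $\operatorname{Re}(s) > -\log r$ in the $s$-variable, on which the Dirichlet series converges. The exponents $\lambda_i$ are literally the same objects in both pictures, so the hypothesis that they are real, grow to infinity, and have no finite linear basis transfers verbatim.

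First I would make precise that if $f(x) = \sum a_i x^{\lambda_i}$ converges in a punctured neighborhood of $0$ and satisfies an algebraic differential equation $\Phi(f^{(j)}(x)) = 0$, then the composed function $\varphi(s) := f(e^{-s})$ satisfies an algebraic differential equation in $s$. This is exactly the composition principle developed in \S5: since $x = e^{-s}$ is itself an algebraic-transcendent function of $s$ (it satisfies $x' + x = 0$), and $f$ is by assumption algebraic-transcendent, the earlier argument showing that $f(g(x))$ is algebraic-transcendent whenever $f$ and $g$ are applies directly to conclude that $\varphi(s) = f(e^{-s})$ is algebraic-transcendent, i.e.\ satisfies an algebraic differential equation with constant coefficients. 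Concretely, the derivatives $f^{(j)}(e^{-s})$ are expressed rationally in terms of the derivatives of $\varphi$ and of $e^{-s}$ (the denominators being powers of $(e^{-s})' = -e^{-s}$, which does not vanish), so substituting these into $\Phi$ yields a nontrivial algebraic relation among the derivatives of $\varphi$.

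Then I would invoke \hyperlink{Theorem 6}{Theorem 6}: the Dirichlet series $\varphi(s) = \sum a_i e^{-\lambda_i s}$ converges on a half-plane and, by the previous step, satisfies an algebraic difference differential equation (in particular an algebraic differential equation is a special case). Theorem 6 then forces the system of exponents $\{\lambda_i\}$ to possess a finite linear basis. This directly contradicts the hypothesis of Theorem 12 that the $\lambda_i$ have no finite linear basis. Hence $f(x)$ cannot have satisfied any algebraic differential equation to begin with, which is the assertion.

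The step I expect to carry the real content is the composition/substitution argument producing the differential equation for $\varphi(s)$ from the one for $f(x)$; but since this is precisely the machinery assembled in \S5 (and already used to pass between power series and Dirichlet series in the remarks leading into \hyperlink{Theorem 10}{Theorem 10}), the obstacle is largely bookkeeping rather than conceptual. One minor point to verify is that the algebraic differential equation for $f$ may be taken not to depend explicitly on $x$ — but this is guaranteed by \hyperlink{Proposition 1}{Proposition 1} (in its partial-derivative form), so the passage through $x = e^{-s}$ does not reintroduce an explicit $s$-dependence that would obstruct the application of Theorem 6.
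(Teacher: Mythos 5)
Your proposal is correct and follows essentially the same route as the paper: Ostrowski presents Theorem 12 precisely as a consequence of \hyperlink{Theorem 6}{Theorem 6} obtained through the substitution $e^{-s}=x$, with the passage from the differential equation for $f(x)$ to one for $f(e^{-s})$ justified by the composition principle for algebraic-transcendent functions established earlier in \S 5. Nothing essential is missing from your argument.
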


With certain restrictions, the theorem can also be generalized to the case that $ \lambda_i $ take on complex values. In this case, however, direct evidence will emerge from the investigations within the last section. \\

We will understand (herein and in the sequel) $ x ^ \lambda $ to always be $ e ^ {\lambda \text{log}x}$ where the coefficient of $ \sqrt {-1} $ in $ \text{log}x $ is taken to be non-negative and less than $ 2\pi $.

\newpage

\section{On algebraic-transcendent function of several variables.}

In the previous section we alluded to a certain class of analytic functions of several variables $ f (x, y, z, \ldots) $ having the following property: If one replaces $ x, y, z, \ldots $   in $ f (x, y, z, \ldots) $ with any algebraic-transcendent functions $ \varphi_1 (s), \varphi_2 (s), \varphi_3 (s), \ldots $ of a variable $ s $, then the resulting function
$$ f (\varphi_1 (s), \varphi_2 (s), \varphi_3 (s), \ldots) $$
is again an algebraic-transcendent function, provided the domains and range of the $\varphi$ and $f$ respectively line up properly. We now take it upon ourselves to construct all functions of several variables with this property. We will refer to this property as property $ A $ in short. Let $ f (x, y, z) $ be such a function. Then $ f (x, y, z) $ is algebraic-transcendent as a function of $x$ for every fixed value of $ y, z $ . Next, consider the set of all products of powers of an indeterminate function $ \Phi (t) $ and its derivatives of arbitrary order. This set is countable, since for every product of powers
\begin{align}
\Phi (t) ^ \alpha \Phi '(t) ^ \beta \ldots \Phi^{(i)}(t) ^ \gamma \ldots
\end {align}
the sum $ \alpha + 1 \beta + \ldots + i \gamma + \ldots $ is finite and for the value of this sum only finitely many such products exist. Furthermore, the set $ U $ of all finite subsets of the set of products (37) is also countable, as is known and immediately verifiable. For every value of $ y, z $ there is now a system of products of powers:
\begin{align}
\Pi_1 (\Phi ^ {(i)}), \Pi_2 (\Phi ^ {(i)}), \ldots, \Pi_k (\Phi ^ {(i)})
\end {align}
of the form (37) and, with the property that for said $ y, z $, we have
\begin{align}
c_1 \Pi_1 (f ^ {(i)} (x, y, z)) + c_2 \Pi_2 (f ^ {(i)} (x, y, z)) + \ldots + c_k \Pi_k (f ^ { (i)} (x, y, z)) = 0,
\end{align}
where the derivatives of $ f (x, y, z) $ are only taken with respect to $ x $, and $ c_1, c_2, \ldots, c_k $ are constants which do not all vanish. Now, set $ z = z_0 $ and let $ y $ take values in an arbitrarily small strip $ a \leq y \leq b $ in the corresponding domain of regularity ($ a \leq y \leq b $, $ c \leq x \leq d $) of $ f $. Then $y$ takes on values in a set having the magnitude of the continuum. And since the magnitude of the set $ U $ is smaller, there is at least one system of products of powers (38) for which there is a relation of the form (39) for infinitely many $ y $. The coefficients $ c_1, \ldots, c_k $ can naturally still depend on the respective value of $y$. Let us now construct the Wronskian determinant of the functions
\begin{align}
\Pi_1 (f ^ {(i)} (x, y, z_0)), \Pi_2 (f ^ {(i)} (x, y, z_0)), \Pi_k (f ^ {(i)} (x , y, z_0))
\end{align}
with respect to $ x $, giving rise to an analytic function $ F (x, y) $ which vanishes identically in $ x $ for infinitely many $ y $ in interval $ a \leq y \leq b $. Therefore, for $x$ fixed, it has an infinite number of roots in the interval $ a \leq y \leq b $ as a function of $ y $,  meaning it vanishes identically in $ x, y $ for each fixed value of $z$. Therefore, there exists a relation of the form
\begin{align} c_1 (y) \Pi_1 (f ^ {(i)} (x, y, z_0)) + c_2 (y) \Pi_2 (f ^ {(i)} (x, y, z_0)) + \ldots + c_k (y) \Pi_k (f ^ {(i)} (x, y, z_0)) = 0,
\end{align}
between the functions ( 40), in which, as the well-known proof of the theorem on the Wronskian determinant immediately yields, the $ c_1 (y), c_2 (y), \ldots, c_k (y) $ can be taken as analytic functions of $ y $ for $ a \leq y \leq b $ which do not all vanish entirely. We then differentiate the relation (41) $(k-1)$ times with respect to $x$. This leaves us with an additional $ k-1 $ equations of the form
\begin{align}
c_1 (y) \Pi_1 ^ {(\nu)} (f ^ {(i)} (x, y, z_0)) + c_2 (y) \Pi_2 ^ {(\nu)} (f ^ {(i) } (x, y, z_0)) + \ldots 
\end {align}
$$+ c_k (y) \Pi_k ^ {(\nu)} (f ^ {(i)} (x, y, z_0)) = 0$$
for $ \nu = 1,2, \ldots, k-1 $. Here $ \Pi_l ^ {(\nu)} (\Phi ^ {(i)}) $ denotes the differential expression obtained by differentiating $ \Pi_l (\Phi ^ {(i)}) $ $ \nu $ times with respect to the independent variable. Let us consider the determinant
$$
\begin {vmatrix}
\Pi_1 (\Phi ^ {(i)}) & \Pi_2 (\Phi ^ {(i)}) & \ldots & \Pi_k (\Phi ^ {(i)}) \\
\Pi_1 ^ {(1)} (\Phi ^ {(i)}) & \Pi_2 ^ {(1)} (\Phi ^ {(i)}) & \ldots &
\Pi_k ^ {(1)} (\Phi ^ {(i)}) \\
\Pi_1 ^ {(k-1)} (\Phi ^ {(i)}) & \Pi_2 ^ {(k-1)} (\Phi ^ {(i)}) & \ldots &
\Pi_k ^ {(k-1)} (\Phi ^ {(i)}) \\
\end {vmatrix}
= T (\Phi ^ {(i)}).
$$
This determinant represents a \textit{non-identically vanishing} differential expression of $ \Phi $. Were it to vanish identically, by substituting the Riemann $ \zeta $-function $ \zeta (s) $ for $ \Phi $, it would result in the Wronskian determinant of the $ k $ functions $ \Pi_1 (\zeta ^ {(i)} (s)), \Pi_2 (\zeta ^ {(i)} (s)), \ldots, \Pi_k (\zeta ^ {(i)} (s)) $.  Vanishing would imply the existence of the relation
\begin{align}
c_1 \Pi_1 (\zeta ^ {(i)} (s)) + c_2 \Pi_2 (\zeta ^ {(i)} (s)) + \ldots + c_k \Pi_k (\zeta ^ {(i)} ( s)) = 0
\end {align}
with $ c $ not all vanishing. Since $ \Pi_1, \Pi_2, \ldots, \Pi_k $ are all products of the function $ \Phi $ and its derivatives that differ from one another, (43) results in an algebraic differential equation that is satisfied by $ \zeta (s) $. However, this contradicts Hilbert's theorem that $ \zeta (s) $ does not satisfy any algebraic differential equation. \\

 So, for every value $ z_0 $ of $ z $, $f(x,y,z)$ satisfies an algebraic differential equation in $ x $ with constant coefficients:
$$ T (f ^ {(i)} (x, y, z_0)) = 0. $$

We can now let $z$ vary, and by repeating the same conclusions we arrive at a relation of the form

\begin{align}
c_1 (y, z) \overline {\Pi_1} (f ^ {(i)} (x, y, z)) + c_2 (y, z) \overline {\Pi_2} (f ^ {(i)} (x, y, z)) + \ldots
\end {align}
$$ + c_ {k '} (y, z) \overline {\Pi_ {k'}} (f ^ {(i)} (x, y, z)) = 0, $$
in which $ \overline {\Pi_1} (\Phi ^ {(i)}), \ldots, \overline {\Pi_ {k '}} (\Phi ^ {(i)})$ are once again products of powers of the function $ \Phi $ and its derivatives. By now differentiating (44) in $x$ $ (k'-1) $ times and following the same process as above, we obtain an algebraic differential equation (in $ x $), which $ f (x, y, z) $ satisfies identically in $ x, y, z $. This proof evidently does not rely on the number of variables. We formulate this result as
\hypertarget{Proposition 5}{}
\begin {prop}
If an analytic function $ f (x, y, z, \ldots) $ has the property that it satisfies an algebraic differential equation (as a function of $ x $) in the neighborhood of a regular point for every value of $ y, z, \ldots $, it also satisfies a constant coefficient algebraic ordinary differential equation as a function of $ x $ identically in $ x, y, z, \ldots $.
\end {prop}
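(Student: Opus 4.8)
The plan is to exploit a mismatch of cardinalities: for each fixed choice of the auxiliary variables $y, z, \ldots$ the function $x \mapsto f(x, y, z, \ldots)$ is algebraic-transcendent, so it is annihilated by some polynomial relation among $f$ and its $x$-derivatives; but there are only countably many possible \emph{shapes} of such relations, whereas the auxiliary variables range over a continuum. I would treat the auxiliary variables one at a time, first fixing $z, \ldots$ at a value $z_0$ and varying $y$ alone, since the argument is insensitive to the number of variables and can be iterated.

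First I would list the countable set of products of powers (monomials) in an indeterminate $\Phi$ and its derivatives, and observe that the family $U$ of \emph{finite} subsets of these monomials is still countable. For each admissible $y$ the hypothesis gives a vanishing linear combination of finitely many such monomials evaluated at $f^{(i)}(x,y,z_0)$; its support is an element of $U$. A map from the uncountable set of admissible $y$ into the countable set $U$ must have an uncountable fibre, so a single finite monomial system $\Pi_1, \ldots, \Pi_k$ supports a relation for infinitely many $y$, in particular for a set of $y$ possessing an accumulation point. Next I would pass from ``infinitely many $y$'' to ``all $y$'' by a Wronskian argument: the Wronskian in $x$ of $\Pi_1(f^{(i)}(x,y,z_0)), \ldots, \Pi_k(f^{(i)}(x,y,z_0))$ vanishes in $x$ for each of those special $y$, hence, fixing $x$ and using analyticity in $y$ together with the accumulation point, it vanishes identically in $(x,y)$; the standard Wronskian theorem then supplies a linear dependence with analytic, not-all-zero coefficients $c_j(y)$. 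Differentiating this dependence $k-1$ times in $x$ produces a homogeneous linear system in the $c_j(y)$ whose coefficient matrix is precisely the determinant $T(\Phi^{(i)})$ built from the $\Pi_j$ and their $x$-derivatives; since the $c_j(y)$ are not all zero, $T(f^{(i)}(x,y,z_0)) = 0$.

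The main obstacle is verifying that $T$ is not the zero differential expression, for otherwise the conclusion is vacuous. This is exactly where I would invoke the Riemann $\zeta$-function and Hilbert's theorem: if $T$ vanished formally as a differential expression in $\Phi$, then substituting $\zeta$ for $\Phi$ would force a linear dependence among the pairwise distinct monomials $\Pi_j(\zeta^{(i)})$, i.e. an algebraic differential equation for $\zeta$, which is impossible. With $T \not\equiv 0$ in hand, $T(f^{(i)}(x,y,z_0)) = 0$ is a genuine constant-coefficient algebraic ordinary differential equation in $x$ holding identically in $x$ and $y$ at the fixed $z_0$.

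Finally I would reinstate $z$ as a variable and rerun the entire cardinality-plus-Wronskian procedure one level up. For each $z$ the relations just obtained furnish a monomial system drawn from the same countable family, so the identical argument, now propagating in $z$ through analyticity and eliminating the $(y,z)$-dependent coefficients $c_j(y,z)$ by a second determinant, removes the dependence on $z$; iterating over the remaining variables yields a single constant-coefficient algebraic ordinary differential equation in $x$ satisfied by $f$ identically in $x, y, z, \ldots$.
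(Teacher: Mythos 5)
Your proposal is correct and follows essentially the same route as the paper: the countable family of finite monomial systems versus the continuum of parameter values, the Wronskian argument promoting a relation for infinitely many $y$ to an identity with analytic coefficients $c_j(y)$, the elimination of those coefficients via the determinant $T$, the appeal to Hilbert's theorem on $\zeta(s)$ to see that $T$ is not the zero differential expression, and the iteration over the remaining variables. No substantive differences to report.
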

In the case of two variables $ x, y $, our proof says more. We have only used the fact that the magnitude of the set of $ y $ -values for which $ f (x, y) $ is algebraic-transcendent is greater than the magnitude of a countable set. And from this we can conclude that $ f (x, y) $ is algebraic-transcendent for every value of $ y $ for which the function $ f (x, y) $ remains analytic in $ x $. This results in
\hypertarget{Theorem 13}{}
\begin{theorem}
If there exists even a single fixed value of $ y $ for which an analytic function $ f (x, y) $ remains analytic in $ x $ and does not satisfy any algebraic differential equation with respect to $ x $, then there are at most a countable number of fixed values of $ y $, for which $ f (x, y) $ remains analytic in $ x $ and is algebraic-transcendent.
\end{theorem}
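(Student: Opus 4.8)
The plan is to argue by contradiction, extracting from the proof of \hyperlink{Proposition 5}{Proposition 5} the sharper quantitative fact that its mechanism requires only \emph{uncountably many} admissible values of $y$, not all of them. Let $E$ denote the set of fixed values $y$ for which $f(x,y)$ remains analytic in $x$ and is algebraic-transcendent as a function of $x$, and suppose toward a contradiction that $E$ is uncountable. I will show that then $f(x,y)$ satisfies a single constant-coefficient algebraic differential equation in $x$ holding identically in $x$ and $y$; this forces $f(x,y_0)$ to be algebraic-transcendent for the distinguished value $y_0$ supplied by the hypothesis, contradicting the assumption that $f(x,y_0)$ is analytic in $x$ yet not algebraic-transcendent.

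First I would invoke the countable set $U$ of finite collections of products of powers of the shape (37). For each $y\in E$, the algebraic-transcendence of $f(x,y)$ yields a relation of the form (39) among finitely many such products of the $x$-derivatives of $f(x,y)$, and this relation is attached to some member of $U$. Since $U$ is countable while $E$ is uncountable, the pigeonhole principle furnishes a single finite system $\Pi_1(\Phi^{(i)}),\ldots,\Pi_k(\Phi^{(i)})$ that serves for an uncountable subset $E'\subseteq E$. Next, exactly as in \hyperlink{Proposition 5}{Proposition 5}, I would form the Wronskian determinant $F(x,y)$ with respect to $x$ of the functions $\Pi_1(f^{(i)}(x,y)),\ldots,\Pi_k(f^{(i)}(x,y))$, which by construction vanishes identically in $x$ for every $y\in E'$.

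The crux is then to upgrade this to $F(x,y)\equiv 0$. Because all but countably many points of the uncountable set $E'$ are condensation points of $E'$, I may choose $y^{*}\in E'$ that is simultaneously a condensation point of $E'$; since $y^{*}\in E'$, it lies in the open region where $f$ is jointly analytic. For each fixed $x$ the analytic function $y\mapsto F(x,y)$ then has zeros (namely the points of $E'$) accumulating at the interior point $y^{*}$, so by the identity theorem it vanishes identically, whence $F(x,y)\equiv 0$. From here the argument of \hyperlink{Proposition 5}{Proposition 5} runs verbatim: the vanishing Wronskian produces a relation (41) with analytic, not-all-vanishing coefficients $c_l(y)$, and differentiating $k-1$ times in $x$ and invoking the non-identically-vanishing differential expression $T(\Phi^{(i)})$ (whose nontriviality was secured via the Riemann $\zeta$-function) yields the constant-coefficient algebraic differential equation $T(f^{(i)}(x,y))=0$, valid identically in $x,y$. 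Specializing to $y=y_0$ exhibits $f(x,y_0)$ as a solution of a nontrivial algebraic differential equation in $x$, contradicting the hypothesis and forcing $E$ to be countable.

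I expect the main obstacle to be precisely the step guaranteeing interior accumulation, so that the identity theorem legitimately collapses $F$ to zero: one must rule out the possibility that $E'$ accumulates only on the boundary of the common domain of analyticity. This is the analogue of the device in \hyperlink{Proposition 5}{Proposition 5} of confining $y$ to a subinterval $[a,b]$ of regularity, and the condensation-point observation above is what lets me choose the accumulation point inside $E'\subseteq E$, hence inside the open domain, so that $F(x,\cdot)$ is genuinely analytic there. The remaining bookkeeping — that $F(x,y)$ is jointly analytic because it is a polynomial in the $x$-derivatives of the analytic $f$, and that $T(f^{(i)}(x,y))=0$ is a genuine (nontrivial) algebraic differential equation in $x$ for every fixed $y$ — is routine once this accumulation point is in hand.
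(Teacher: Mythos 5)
Your proposal is correct and follows essentially the paper's own route: Ostrowski proves Theorem 13 by observing that the argument of \hyperlink{Proposition 5}{Proposition 5} (pigeonhole over the countable set $U$ of finite systems of power products, the Wronskian $F(x,y)$, the identity theorem, and the non-vanishing of $T$ secured via the $\zeta$-function) uses only that the admissible $y$-values form a set of cardinality greater than countable, so an uncountable set $E$ would force a single constant-coefficient equation $T(f^{(i)}(x,y))=0$ identically in $x,y$, contradicting the existence of $y_0$. Your condensation-point device for locating an interior accumulation point is just a mild variant of the paper's trick of confining $y$ to a compact strip $a\leq y\leq b$ of regularity, as you yourself note.
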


In particular, we see that the power series
$$ \frac {x} {1 ^ \nu }+ \frac {x ^ 2} {2 ^ \nu} + \frac {x ^ 3} {3 ^ \nu} + \ldots $$
only satisfies an algebraic differential equation as a function of $ x $ for a countable number of fixed $ \nu $, since it is not algebraic-transcendent for rational non-integer $ \nu $ (for every integer $ \nu $, on the other hand, it is algebraic-transcendent, as one can easily verify). \\

From this we obtain the following more strong result:

\hypertarget{Theorem 14}{}
\begin{theorem}
If for every positive $ M $ there exists a $ y $ such that the analytic function $ f (x, y) $ remains analytic for this fixed $ y $ and does not satisfy any algebraic differential equation whose order is less than $ M $, then there is there are only countably many $ y $ for which $ f (x, y) $ remains analytic in $ x $ and is algebraic-transcendent.
\end{theorem}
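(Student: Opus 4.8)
The plan is to argue by contradiction, reusing the Wronskian--cardinality machinery from the proof of Proposition 5 but keeping careful track of the \emph{order} of the differential equation that machinery manufactures. Suppose that the set
$$ S = \{\, y : f(x,y) \text{ is analytic in } x \text{ and algebraic-transcendent} \,\} $$
is uncountable. For each $y \in S$ the function $f(x,y)$ satisfies an algebraic differential equation in $x$ with constant coefficients; writing that equation as a linear combination of monomials in $f, f', f'', \ldots$ exhibits a nontrivial relation of the form (39), whose underlying finite set of products of powers $\{\Pi_1,\ldots,\Pi_k\}$ is an element of the countable set $U$. This assigns to each $y\in S$ a point $u(y)\in U$.

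Next I would run the pigeonhole step. Since $U$ is countable and $S$ is uncountable, some fiber $S_{u^*}=\{\,y\in S: u(y)=u^*\,\}$ is uncountable; an uncountable subset of the plane has infinitely many points accumulating inside some bounded strip $a\le y\le b$. On that strip the relation (39) holds, for the \emph{fixed} collection $u^*=\{\Pi_1,\ldots,\Pi_k\}$, for infinitely many $y$, so exactly as in Proposition 5 the Wronskian of $\Pi_1(f^{(i)}(x,y)),\ldots,\Pi_k(f^{(i)}(x,y))$ in $x$ vanishes for infinitely many $y$ and hence identically. Differentiating the resulting relation $(k-1)$ times and forming the determinant produces a differential expression $T(\Phi^{(i)})$ with $T(f^{(i)}(x,y))=0$ identically in $(x,y)$. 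As in Proposition 5, substituting the Riemann $\zeta$-function and invoking Hilbert's theorem shows $T$ is not the zero differential expression, so this is a genuine algebraic differential equation; by analytic continuation it holds for every $y$ at which $f(x,y)$ remains analytic in $x$.

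The crucial point---and the only thing beyond the proof of Theorem 13---is that the order of $T$ is controlled \emph{a priori} by $u^*$ alone. If every product in $u^*$ involves derivatives of order at most $N$, then each entry of the determinant arises from at most $k-1$ differentiations, so $T$ has order at most $p := N+k-1$. Now I invoke the hypothesis with $M = p+1$: there is a value $y_M$ for which $f(x,y_M)$ is analytic in $x$ and satisfies no algebraic differential equation of order less than $M$. But $f(x,y_M)$ does satisfy $T(f^{(i)}(x,y_M))=0$, an equation of order at most $p < M$, a contradiction. Hence $S$ is countable.

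The main obstacle is the order bookkeeping in the last paragraph: one must be sure that the determinant $T$ produced by the Wronskian argument has order depending only on the single pigeonholed element $u^*\in U$, and not on $y$ or on the size of $S$. Once that is pinned down, the weaker hypothesis of Theorem 14---that low-order equations are excluded, and only at a suitable $y$ for each order bound $M$---is exactly strong enough to replace the single bad value $y_0$ used in Theorem 13: rather than contradicting algebraic-transcendence at one fixed point, we contradict the absence of a \emph{sufficiently low order} equation at the point $y_M$ tailored to the order of $T$.
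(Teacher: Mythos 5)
Your proposal is correct and follows exactly the route the paper intends: the paper states Theorem 14 as an immediate strengthening of Theorem 13, whose proof is the two-variable Wronskian/cardinality argument of Proposition 5, and your contribution is simply to make explicit the order bookkeeping (the equation $T=0$ produced by the pigeonholed system $u^*=\{\Pi_1,\ldots,\Pi_k\}$ has order at most $N+k-1$, a bound fixed once $u^*$ is fixed) that the paper leaves implicit. Choosing $M$ larger than that bound then yields the contradiction, so the argument is complete and faithful to the original.
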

Let us now return to our task. From our premises we can conclude that $ f (x, y, z, \ldots) $ satisfies an ordinary algebraic differential equation with constant coefficients with respect to each of its variables
\begin{align}
\Phi_x (f ^ {(i)} (x, y, z, \ldots)) = 0, \Psi_y (f ^ {(i)} (x, y, z, \ldots)) = 0,
\end {align}
$$ X_z (f ^ {(i)} (x, y, z, \ldots)) = 0, \ldots $$

From this it follows that every partial differential quotient of $ f $ of a certain order $ m + 1 $ can be expressed algebraically by $ f $ and its derivatives up to $ m $-th order. Let $ \Phi_x = 0 $ have order $ \alpha $, $ \Psi_y = 0 $ have order $ \gamma $ etc. Then take $m$ to be $ \alpha + \beta + \gamma + \ldots -1$.  Now it suffices to prove that every derivative
\begin{align}
\frac {\partial ^ {a + b + c + \ldots} f} {\partial x ^ {a} \partial y ^ b \partial z ^ c \ldots},
\end {align}
for $ a + b + c + \ldots \geq \alpha + \beta + \gamma $ can be expressed algebraically using partial derivatives of $ f $ up to the order $ a + b + c + \ldots -1$.  Since  $ a + b + c + \ldots \geq \alpha + \beta + \gamma + \ldots $, it follows that either $ a \geq \alpha $, or $ b \geq \beta $, or $ c \geq \gamma $, $\ldots$. Suppose $ a \geq \alpha $. We thus construct the differential equation
\begin{align}
\frac {\partial ^ {a + b + c + \ldots} \Phi_x (f ^ {(i)}) (x, y, z, \ldots)} {\partial x ^ {a} \partial y ^ b \partial z ^ c \ldots} = 0.
\end {align}

In it the partial derivative (46) occurs linearly, multiplied with
\begin{align}
\frac {\partial \Phi_x} {\partial f ^ {(\alpha)}},
\end {align}
where $ f ^ {(\alpha)} $ is the $ \alpha $-th derivative in $ x $ of $f$. And since we can assume that (45) are equations of the lowest possible order and among those are also of the lowest possible total degree, (48) is non-zero. On the other hand, all derivatives occurring in (47), with the exception of (46), are of order less than $ a + b + c + \ldots, $ whence our claim is proven.\\

A system of partial differential equations for a function $ f (x, y, z, \ldots) $, which allows all differential quotients of a certain order to be expressed in terms of the differential quotients of lower order is called a \textit{Mayer system}. If the left-hand sides of the differential equations in a Mayer system are polynomials in the derivatives, whose coefficients are algebraic in the independent variables, we have what is called an \textit{algebraic} Mayer system. \textit{If a function $ f (x, y, z, \ldots) $ satisfies an algebraic Mayer system, then, for each of its variables, it always satisfies an ordinary differential equation with constant coefficients}. This holds since by definition, each of the partial derivatives $ \frac {\partial ^ \alpha f (x, y, z, \ldots} {\partial x ^ \alpha} $ can be expressed algebraically by finitely many partial derivatives of $ f $ and by $ x, y, z, \ldots $; but according to Lemma \ref{lemma:sec3} of \S3, it follows from this that a rational equation with constant coefficients holds between a sufficiently large number of partial derivatives $ \frac {\partial ^ \alpha f (x, y, z, \ldots )} {\partial x ^ \alpha} $. From this and the result above, it can be further established that every function satisfying an algebraic Mayer system, also satisfies such a Mayer system whose differential equations on the left-hand side are polynomials in the derivatives with \textit{constant} coefficients.- A Mayer system can also be characterized in a different fashion. The general integral of such a system does not depend on arbitrary functions, but only on a finite number of arbitrary constants. Conversely, via an oft-cited theorem, any system of differential equations whose general integral depends only on finitely many arbitrary constants is a Mayer system. However, a rigorous proof of this last theorem is not to be found in the literature. It can only initially be derived from Riquier's investigations, which establish a normal form for systems of partial differential equations, off of which the number and type of arbitrary elements in the general integral can be read directly, and into which every system of partial differential equations can be converted using differentiation and elimination\footnote {Riquier, Les systèmes d'équations aux dérivées partielles. Paris, Gauthier-Villars, 1910.}.
\\
The result obtained above can now be stated as follows:  every function possessing property $A$ satisfies an algebraic Mayer system.  This result can be reversed, from which we conclude that

\hypertarget{Theorem 15}{}
\begin{theorem}
If an analytic function $ f (x, y, z, \ldots) $ has the property of always being an algebraic-transcendent function of $ s $ as soon $ x, y, z, \ldots $ are taken to be algebraic-transcendent functions of $ s $, whose ranges and domains line up with one another and the domain of $ f (x, y, z, \ldots) $, - or additionally only if all variables $ x, y, z, \ldots $ are replaced with arbitrary numbers from certain uncountable sets, with the exception of one which is set equal to $ s $, then $f(x,y,z,\ldots)$ satisfies an algebraic Mayer system and vice-versa.
\end{theorem}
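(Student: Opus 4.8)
The statement is a biconditional, and one of its two directions is already in hand: the discussion preceding the theorem shows that property $A$ --- and indeed the ostensibly weaker hypothesis that $f$ becomes algebraic-transcendent in one variable whenever the remaining variables range over uncountable sets --- forces $f$ to satisfy an algebraic Mayer system. Note moreover that property $A$ itself already implies this weaker hypothesis: substituting the algebraic-transcendent functions $\varphi_1(s)=s$ and $\varphi_j(s)\equiv c_j$ for arbitrary constants $c_j$ exhibits $f(x,c_2,c_3,\dots)$ as algebraic-transcendent in $x$ for every choice of constants. So the plan is to recall the forward direction only briefly and to spend the real effort on the converse, that an algebraic Mayer system implies property $A$.

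For the forward direction I would argue exactly as in \hyperlink{Proposition 5}{Proposition 5}: fixing all but one variable, the hypothesis supplies uncountably many parameter values at which $f$ is algebraic-transcendent in that variable; a cardinality count against the countable set of differential monomials produces a single Wronskian relation valid for infinitely many parameter values, and the nonvanishing of the governing differential expression (forced by Hilbert's theorem that $\zeta(s)$ satisfies no algebraic differential equation) upgrades this to a constant-coefficient ordinary differential equation in that variable holding identically. Running this for each variable and then splicing the equations together --- differentiating $\Phi_x=0$ repeatedly and clearing the nonvanishing leading coefficient $\partial\Phi_x/\partial f^{(\alpha)}$ --- expresses every sufficiently high partial derivative of $f$ algebraically through lower ones, which is precisely a Mayer system.

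The substance is the converse. Assume $f(x,y,z,\dots)$ satisfies an algebraic Mayer system, which by the remark preceding the theorem we may take to have constant-coefficient polynomial left-hand sides. Then every partial derivative of $f$ is an algebraic function, over the constants, of the finitely many partial derivatives of order at most $m$; hence the field generated over $\mathbb{C}$ by \emph{all} partials of $f$ has finite transcendence degree, bounded by the number $N$ of partials of order at most $m$. Now let $\varphi_1(s),\varphi_2(s),\dots$ be algebraic-transcendent, each satisfying an algebraic differential equation of some order $n_i$, and set $\varphi(s)=f(\varphi_1(s),\varphi_2(s),\dots)$. By the chain rule each derivative $\varphi^{(k)}(s)$ is a polynomial in the partial derivatives of $f$ evaluated at $(\varphi_1(s),\varphi_2(s),\dots)$ and in the derivatives of the $\varphi_i(s)$. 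The evaluated partials are algebraic over $\mathbb{C}$ adjoined with the $N$ evaluated basic partials, since the Mayer relations hold identically and thus survive the substitution; the derivatives of each $\varphi_i$ are algebraic over $\mathbb{C}$ adjoined with finitely many derivatives of $\varphi_i$, with transcendence degree at most $n_i$. Consequently every $\varphi^{(k)}(s)$ lies in a single field of transcendence degree at most $M:=N+\sum_i n_i$ over $\mathbb{C}$.

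It then remains to invoke the elimination lemma of \S3: passing to a transcendence basis $t_1,\dots,t_d$ with $d\le M$ of this field over $\mathbb{C}$, the $d+1$ functions $\varphi,\varphi',\dots,\varphi^{(d)}$ are algebraic functions of the $d$ independent quantities $t_1,\dots,t_d$ with coefficients in $\mathbb{C}$, so they admit a nontrivial polynomial relation $\Phi(\varphi,\varphi',\dots,\varphi^{(d)})=0$ with numerical coefficients. This is an algebraic differential equation for $\varphi$, so $\varphi$ is algebraic-transcendent and $f$ has property $A$. I expect the only delicate points to be bookkeeping: verifying through the chain rule that no $\varphi^{(k)}$ escapes the finitely generated field, so that the transcendence degree genuinely stays bounded by $M$, and keeping the base field equal to $\mathbb{C}$ throughout, so that the relation produced by the lemma has truly constant coefficients and is therefore a bona fide algebraic differential equation rather than one with variable coefficients.
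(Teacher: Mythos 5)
Your proposal is correct and follows essentially the same route as the paper: the forward direction is the cardinality/Wronskian argument of \hyperlink{Proposition 5}{Proposition 5} spliced into a Mayer system via the nonvanishing leading coefficients, and the converse is exactly the paper's proof of \hyperlink{Theorem 17}{Theorem 17} (of which the converse of \hyperlink{Theorem 15}{Theorem 15} is the single-variable special case), namely bounding the number of quantities through which all derivatives of the composite can be expressed algebraically and then invoking the elimination lemma of \S3. Your transcendence-degree phrasing is just a cleaner dress for Ostrowski's count of the minimal number $\lambda+\nu_1+\nu_2+\cdots$ of generators.
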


The proof of the reversed statement presents no difficulties. We will proceed to link this to some more general and farther reaching observations.
\\
Indeed, the analytic functions that satisfy algebraic Mayer systems constitute the proper generalization of single-variable algebraic-transcendent functions. In a certain sense these form a closed system since we can state entirely analogous results for them as we did in previous sections. We will designate them as \textit{algebraic-transcendent functions of several variables}. To further generalize this concept, we define the \textit{domain of rationality} $R$ as a field of analytic functions of a chosen set of variables $ x, y, z, \ldots $ which all share a common domain $ G (R) $ in which each of them is regular analytic save for $ (n-1) $ - dimensional singular manifolds. Moreover, we require that $ R $ also contains the partial derivatives of all orders of any function of $ x, y, z, \ldots $ contained in it. We now call an analytic function $ f (x, y, z, \ldots) $ \textit{algebraic-transcendent with respect to $ R $} if it satisfies a Mayer system, whose differential equations result from setting polynomials in $ f $ and their derivatives equal to zero, where the coefficients are taken from $R$. \\

If $ R $ is the field of all constants, $ f (x, y, z, \ldots) $ becomes quintessentially algebraic-transcendent. Just as above, we can show:
\begin {enumerate}
\item Every $ f (x, y, z, \ldots) $ that is algebraic transcendent with respect to $ R $ satisfies an ordinary differential equation in each of its variables, the left-hand side of which is a polynomial in $ f $ and its derivatives in the appropriate variable, whose coefficients are taken from $ R $.
\item If an analytic function $ f (x, y, z, \ldots) $ satisfies an ordinary differential equation in each of its variables, whose left side is a polynomial in $ f $ and its derivatives with respect to that variable, with coefficients from $ R $, then $ f $ is algebraic-transcendent with respect to $ R $.
\end {enumerate}
And now the following theorems apply:

\hypertarget{Theorem 16}{}
\begin{theorem}
If $ f (x, y, z, \ldots) $ is algebraic-transcendent with respect to the domain of rationality $ R (g, h, \ldots) $, which arises from a domain of rationality $ R $ via the adjunction to $R$ of the functions $ g, h, \ldots $ and all their derivatives , which are each algebraic transcendent with respect to $R$. Then $ f (x, y, z, \ldots) $ is also algebraic-transcendent with respect to $ R $.
\end{theorem}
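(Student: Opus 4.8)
The plan is to reduce the several-variable statement to the single-variable mechanism behind \hyperlink{Theorem 9}{Theorem 9}, applied one variable at a time, using the two characterizations of algebraic-transcendence recorded just above Theorem 16: a function is algebraic-transcendent with respect to $R$ exactly when it satisfies an ordinary algebraic differential equation in each of its variables with coefficients from $R$. It therefore suffices to produce, for each variable, say $x$, a nontrivial polynomial relation with coefficients in $R$ among finitely many of the pure $x$-derivatives $f, f_x, f_{xx}, \ldots$; characterization (2) then delivers the conclusion.

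First I would invoke characterization (1) over the field $R(g, h, \ldots)$: since $f$ is algebraic-transcendent with respect to $R(g, h, \ldots)$, it satisfies a minimal-order ordinary differential equation in $x$, say $\psi(f, f_x, \ldots, f_{x^n}) = 0$, whose coefficients lie in $R(g, h, \ldots)$ and are thus rational functions over $R$ of finitely many partial derivatives of $g, h, \ldots$. Because $R(g, h, \ldots)$ is closed under $\partial/\partial x$, differentiating this equation repeatedly in $x$ and solving for the highest derivative --- the factor $T = \partial\psi/\partial f_{x^n}$ being nonzero by minimality --- expresses every higher derivative $f_{x^k}$ rationally, over $R(g, h, \ldots)$, in terms of $f, f_x, \ldots, f_{x^n}$, with only a power of $T$ in the denominator. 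Each differentiation acts on the coefficients merely by introducing further partial derivatives of $g, h, \ldots$, so the entire computation remains inside $R(g, h, \ldots)$ and brings in no new derivatives of $f$ beyond $f, \ldots, f_{x^n}$.

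The crux is to confine these partial derivatives of $g, h, \ldots$ to a finite algebraic base. Since each of $g, h, \ldots$ is algebraic-transcendent with respect to $R$, it satisfies an algebraic Mayer system over $R$, and by definition such a system expresses every partial derivative above a fixed order algebraically, over $R$, in terms of a fixed finite collection $\mathcal{B}$ of low-order partial derivatives. Hence every partial derivative of $g, h, \ldots$ arising above is algebraic over $R(\mathcal{B})$, and so every $f_{x^k}$ is algebraic over the field $R(\mathcal{B}, f, f_x, \ldots, f_{x^n})$. This field has finite transcendence degree over $R$, at most $|\mathcal{B}| + n + 1$, so the entire infinite family $f, f_x, f_{xx}, \ldots$ lies in a field of finite transcendence degree over $R$; taking more of these derivatives than that bound forces an algebraic dependence, which by Lemma \ref{lemma:sec3} of \S3 is a nontrivial polynomial relation with coefficients in $R$ among $f, f_x, \ldots, f_{x^N}$, i.e. an algebraic ordinary differential equation for $f$ in $x$ over $R$.

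Carrying out the identical argument for $y, z, \ldots$ yields such an equation in every variable, and characterization (2) then gives that $f$ is algebraic-transcendent with respect to $R$, as asserted. The main obstacle --- the only place the several-variable setting genuinely departs from Theorem 9 --- is the bookkeeping of the third paragraph: checking that iterated $x$-differentiation of the coefficients introduces only partial derivatives of $g, h, \ldots$ and never of $f$, and that the Mayer-system structure pins all of these down to a single finite algebraic base $\mathcal{B}$ over $R$. Once that finiteness is in hand, the transcendence-degree count and the lemma of \S3 close the argument exactly as in the one-variable case.
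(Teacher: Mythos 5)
Your argument is correct and rests on the same mechanism as the paper's own proof: finite algebraic generation of the derivatives of $f$ over $R(g,h,\ldots)$, finite algebraic generation of the derivatives of $g,h,\ldots$ over $R$ via their Mayer systems, and the transcendence-degree count of Lemma \ref{lemma:sec3} from \S3 to force a nontrivial relation with coefficients in $R$. The only organizational difference is that you work variable-by-variable with the pure $x$-derivatives and close via characterization (2), whereas the paper counts among all partial derivatives of $f$ at once (a relation among some $\mu+\nu+1$ of them) and extracts the Mayer system over $R$ directly through a minimal-$\lambda$ argument; the substance is identical.
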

\begin{proof}

For this proof, it suffices to consider only the case where a \textit{single} function $ g (x, y, z, \ldots) $ and its derivatives are adjoined to $ R $. The more general result will then follow through repeated application of this case to the case where only \textit{finitely many} functions $ g, h, \ldots $ along with their derivatives are adjoined to $R$. On the other hand, all differential equations belonging to the Mayer system satisfied by $ f $ with respect to $ R (g, h, \ldots) $, can be derived from a finite number of differential equations, for instance from the ordinary differential equations which are satisfied by $f$ in each individual variable. Therefore we are essentially only dealing with finitely many elements from $R(g,h,...)$, along with their derivatives. \\

If all partial derivatives of $ f $ can already be expressed algebraically using only $ \mu $ of them along with elements from $ R (g) $, and all derivatives of $ g $ can also be expressed using only $ \nu $ of them along with elements from $ R $, then an entire rational relation with coefficients from $R$ must hold between some $\mu + \nu + 1$ many partial derivatives of $f$ by Lemma \ref{lemma:sec3} from \S3. Next, let $ \lambda $ be the smallest number with the property that between some $ \lambda $ many partial derivatives of $ f $ there exists an entire rational relation with coefficients from $ R $, and let us say $ p_1, p_2, \ldots , p_{\lambda-1} $ are $ \lambda-1 $ partial derivatives of $ f $ for which such a relation does not exist. Then each partial derivative of $ f $ is an algebraic function of $ p_1, p_2, \ldots, p _ {\lambda-1} $ and elements from $ R $, as we wished to show\footnote {For analytic functions $ F (x, y, z, \ldots) $, for which a single partial differential equation is prescribed, one can prove an analog of this theorem much the same way without additional obstacles: If $ F (x, y, z, \ldots) $ satisfies a partial differential equation, the left side of which is a polynomial on the derivatives of $ F $ whose coefficients are algebraic-transcendent functions of several variables, then $ F $ satisfies an algebraic partial differential equation.}.
\end{proof}

\hypertarget{Theorem 17}{}
\begin{theorem}
If one replaces the variables $ x, y, z, \ldots $ of an algebraic-transcendent function $ f (x,y, z, \ldots) $ with arbitrary algebraic transcendent functions $ \varphi (\xi, \eta, \zeta, \ldots ),$
$ \psi (\xi, \eta, \zeta, \ldots), \chi (\xi, \eta, \zeta, \ldots), \ldots $ of arbitrary variables $ \xi, \eta, \zeta, \ldots $, we are still left with an algebraic-transcendent function of $ \xi, \eta, \zeta, \ldots $
\end{theorem}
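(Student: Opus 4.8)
The plan is to prove the statement by a descent through domains of rationality, exactly paralleling the single--variable composition argument of \S5. Write $F(\xi,\eta,\zeta,\ldots) = f(\varphi(\xi,\eta,\ldots),\psi(\xi,\eta,\ldots),\chi(\xi,\eta,\ldots),\ldots)$, let $R$ denote the field of all constants, and let $R(\varphi,\psi,\chi,\ldots)$ be the domain of rationality (in the sense of this section) obtained from $R$ by adjoining $\varphi,\psi,\chi,\ldots$ together with all their partial derivatives. I would first show that $F$ is algebraic--transcendent with respect to $R(\varphi,\psi,\chi,\ldots)$, and then invoke \hyperlink{Theorem 16}{Theorem 16}: since $\varphi,\psi,\chi,\ldots$ are by hypothesis each algebraic--transcendent with respect to $R$, that theorem yields at once that $F$ is algebraic--transcendent with respect to $R$ itself, which is the assertion.

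The key algebraic input is that the composed partial derivatives of $f$ occupy only finitely many transcendence degrees. Since $f(x,y,z,\ldots)$ is algebraic--transcendent, it satisfies an algebraic Mayer system with constant coefficients, so there are finitely many partial derivatives $\partial^{I_1}f,\ldots,\partial^{I_N}f$ (those up to some order $m$) such that every partial derivative $\partial^I f$ is an algebraic function of them with constant coefficients. These relations are polynomial identities in $x,y,z,\ldots$, and are therefore preserved after the substitution $x=\varphi,\ y=\psi,\ z=\chi,\ldots$. Consequently, writing $g_j = (\partial^{I_j}f)(\varphi,\psi,\chi,\ldots)$, every composed partial $(\partial^I f)(\varphi,\psi,\chi,\ldots)$ is algebraic over $\mathbb{C}(g_1,\ldots,g_N)$, a field of transcendence degree at most $N$.

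Next I would produce, for each variable separately, an ordinary differential equation for $F$ with coefficients from $R(\varphi,\psi,\chi,\ldots)$. By the chain rule, each pure derivative $\partial_\xi^k F$ is a polynomial in the composed partials $(\partial^I f)(\varphi,\psi,\chi,\ldots)$ whose coefficients are polynomials in the $\xi$--derivatives of $\varphi,\psi,\chi,\ldots$, hence lie in $R(\varphi,\psi,\chi,\ldots)$. Therefore $F,\partial_\xi F,\partial_\xi^2 F,\ldots$ all lie in the algebraic closure of $R(\varphi,\psi,\chi,\ldots)(g_1,\ldots,g_N)$, a field of transcendence degree at most $N$ over $R(\varphi,\psi,\chi,\ldots)$. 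By the field--theoretic result \ref{item:fieldthm1} of \S4 (equivalently, by Lemma~\ref{lemma:sec3}), at most $N$ of these can be algebraically independent over $R(\varphi,\psi,\chi,\ldots)$, so among $F,\partial_\xi F,\ldots,\partial_\xi^{N}F$ there is a nontrivial algebraic relation with coefficients from $R(\varphi,\psi,\chi,\ldots)$, i.e.\ an algebraic ordinary differential equation for $F$ in $\xi$ over that domain of rationality. The identical argument applied to $\eta,\zeta,\ldots$ gives such an equation in each variable.

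Finally, since $F$ satisfies an ordinary differential equation in each of its variables with coefficients from $R(\varphi,\psi,\chi,\ldots)$, the second item of the enumerated characterization recorded just before \hyperlink{Theorem 16}{Theorem 16} shows that $F$ is algebraic--transcendent with respect to $R(\varphi,\psi,\chi,\ldots)$, which completes the descent. I expect the main obstacle to be the transcendence--degree bookkeeping of the middle steps: one must verify carefully that the algebraic relations among the partials of $f$ genuinely survive composition and that the chain--rule expansions keep all pure $\xi$--derivatives of $F$ within a field of bounded transcendence degree over $R(\varphi,\psi,\chi,\ldots)$. It is precisely this finiteness---rather than any Jacobian invertibility, which is unavailable here since the number of variables may change under the substitution---that forces the desired ordinary differential equations.
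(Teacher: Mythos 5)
Your proof is correct, but it is organized differently from the paper's. The paper argues in one shot: letting $\mu$ be the number of partial derivatives of $f$ through which all the others can be expressed algebraically with constant coefficients, and $\nu_1,\nu_2,\ldots$ the corresponding numbers for $\varphi,\psi,\chi,\ldots$, it observes (exactly as you do) that the constant-coefficient relations among the partials of $f$ survive the substitution, so that the composed partials $p_i$ reduce algebraically to some $\lambda\le\mu$ of themselves; the chain rule then exhibits every partial derivative of $f(\varphi,\psi,\chi,\ldots)$ as an algebraic function of $\lambda+\nu_1+\nu_2+\cdots$ quantities, and a single application of Lemma~\ref{lemma:sec3} produces a constant-coefficient relation among any $\lambda+\nu_1+\nu_2+\cdots+1$ of these derivatives, i.e.\ the algebraic Mayer system over the constants directly, with the explicit bound $\lambda+\nu_1+\nu_2+\cdots$ on the number of derivatives needed. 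You instead split the argument in two stages: you absorb the derivatives of $\varphi,\psi,\chi,\ldots$ into the coefficient field $R(\varphi,\psi,\chi,\ldots)$, establish relative algebraic-transcendence there (an ordinary differential equation in each variable over that domain, upgraded via the second item of the enumerated equivalence stated just before \hyperlink{Theorem 16}{Theorem 16}), and only then eliminate $\varphi,\psi,\chi,\ldots$ by invoking \hyperlink{Theorem 16}{Theorem 16}. Both routes rest on the same two facts --- survival of the constant-coefficient algebraic relations under composition, and transcendence-degree counting via Lemma~\ref{lemma:sec3} --- but your version delegates the final elimination to Theorem 16 (which internally repeats essentially the same count), whereas the paper's direct count keeps everything over the constants from the start and yields the quantitative bound for free. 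Your closing remark is apt: no Jacobian invertibility is used in either argument, only the finiteness of transcendence degree, and the paper likewise never attempts to invert the substitution.
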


In particular, this theorem contains the converse of \hyperlink{Theorem 15}{Theorem 15} as a special case. It is only formulated for \textit{quintessentially algebraic-transcendent functions}. - If one wants to generalize this to algebraic-transcendent functions in relation to an arbitrary domain of rationality, it becomes necessary to restrict our definition of a domain of rationality $ R $. We additionally have to require that for every function in $ R $, by pre-composing with any function in $ R $, we either give rise to another function in $R$, or at least one that is algebraically-transcendent with respect to $ R $.

\begin{proof}
In the proof of this theorem we will use $ \mu $ to denote the smallest number for which each derivative of $ f (x, y, z, \ldots) $ admits a constant-coefficient algebraic expression using only $ \mu $ of them.  $ \nu_1, \nu_2, \nu_3, \ldots $ we will denote the corresponding numbers for $ \varphi (\xi, \eta, \zeta, \ldots), \psi (\xi, \eta, \zeta, \ldots), \chi (\xi, \eta, \zeta, \ldots), \ldots $ If we replace $x,y,z\ldots$ in all derivatives of $ f (x, y, z, \ldots) $ with the functions $ \varphi, \psi, \chi, \ldots $, the resulting functions of $ \xi, \eta, \zeta, \ldots $ will be denoted by $ p_i $. The $ p_i $ also have the property that between any $ \mu + 1 $ of the functions $ p_i $ there exists an entire rational relation with numerical coefficients. Next, let $ \lambda $ be the smallest number with the property that between $ \lambda + 1 $ of the functions $ p_i $ there is always an entire rational relation with numerical coefficients. Then all $ p_i $ can be expressed algebraically by only $\lambda$ many of them. Every partial derivative of $ f (\varphi, \psi, \chi, \ldots) $ is an entire rational function of the $ p_i $ and the derivatives of $ \varphi, \psi, \chi, \ldots $.  We may thus represent it as an algebraic function of $ \lambda + \nu_1 + \nu_2 + \nu_3 + \ldots $ many quantities. Therefore, according to Lemma \ref{lemma:sec3} from \S3, there exists an entire rational relation with numerical coefficients between $ \lambda + \nu_1 + \nu_2 + \nu_3 + \ldots + 1 $ derivatives of $ f (\varphi, \psi, \chi, \ldots) $. Therefore, all of these derivatives may be expressed algebraically using finitely many (at most $ \lambda + \nu_1 + \nu_2 + \nu_3 + \ldots $) of them, as required.
\end{proof}

\newpage

\section{On algebraic-transcendent Dirichlet series and power series whose terms include arbitrary powers of the independent variables.}

If the function represented by the Dirichlet series

\begin{align}
f (s) = \sum_ {i = 0} ^ \infty a_i e ^ {- \lambda_i s}
\end{align}
is algebraic-transcendent, by \hyperlink{Theorem 1}{Theorems 1} and\hyperlink{Theorem 6}{ 6}, the series (49) can be rewritten as
\begin {align}
F (e ^ {- \lambda ^ {(1)} s}, e ^ {- \lambda ^ {(2)} s}, \ldots, e ^ {\lambda ^ {(\mu)} s}),
\end{align}
where $ F (x, y, z, \ldots) $ is an initially formal power series whose terms contain both positive and negative integer powers of $ x, y, z, \ldots $. In doing so, the real numbers $ \lambda ^ {(1)}, \lambda ^ {(2)}, \ldots, \lambda ^ {(\mu)} $ may be taken as both positive and linearly independent. Now, if $ F $ is a series containing only \textit{positive} powers of $ x, y, z, \ldots $ or containing at most finitely many negative terms, then it is absolutely convergent in a certain region as soon as (49) possesses a domain of convergence; we see that in this case (50) (and (49)) do always have a domain of absolute convergence. In particular, this is always the case if (49) is a series of ``number theoretical type'':

$$ \sum \frac {a_n} {n ^ s}. $$

Next, let (49) satisfy the algebraic differential equation with numerical coefficients 
\begin{align}
\Phi (f ^ {(i)} (s)) = 0.
\end{align}
We next substitute indefinites $u_i$ for the numbers $a_i$ in the expansion (49), and plug the resulting series $\varphi(s)$, which naturally is only of formal meaning, into $\Phi$.  Following a purely formal rearrangement, an expansion of the following form arises:

$$ \sum A_ {n_1, n_2, \ldots, n_ \mu} e ^ {- (n_1 \lambda ^ {(1)} + n_2 \lambda ^ {(2)} + \ldots + n_ \mu \lambda ^ {(\mu)}) s}, $$
where the exponentials are in increasing order of their exponents, and the integers $ n_1, n_2, \ldots, n_ \mu $ take on some positive and negative values. $ A_ {n_1, n_2, \ldots, n_ \mu} $ is a polynomial in the $ u_i $, the $ \lambda_i $ and the coefficients of $ \Phi $. However, not all products of powers of the $u_i$ can occur in $ A_ {n_1, n_2, \ldots, n_ \mu} $.  We only permit those that satisfy a certain weighted constraint. To derive this, we represent the general exponent $ \lambda_i $ of (49) linearly using $ \lambda ^ {(1)}, \lambda ^ {(2)}, \ldots, \lambda ^ {(\mu)} $ and integer coefficients:
$$ \lambda_i = \alpha_i ^ {(1)} \lambda ^ {(1)} + \alpha_i ^ {(2)} \lambda ^ {(2)} + \ldots + \alpha_i ^ {(\mu)} \lambda ^ {(\mu)}. $$
Since the exponents remain unchanged upon the differentiation of any term in (49), then every product of powers
$$ u_ {j_1} u_ {j_2} u_ {j_3} \ldots, $$
of the $ u_i $ in $ A_ {n_1, n_2, \ldots, n_ \mu} $, where $ j_1, j_2, j_3, \ldots $ are indices that are not necessarily mutually distinct, must satisfy the following system of constraints:
$$ \alpha_ {j_1} ^ {(\nu)} + \alpha_ {j_2} ^ {(\nu)} + \alpha_ {j_3} ^ {(\nu)} + \ldots = n_ \nu \hspace{30pt} (\nu = 1,2, \ldots, \mu), $$
due to the the linear independence of the $\lambda^{(\kappa)}$.
If $ c_1, c_2, \ldots, c_ \mu $ are arbitrary numbers, multiply each $ u_i $ by
$$ c_1 ^ {\alpha_i ^ {(1)}} c_2 ^ {\alpha_i ^ {(2)}} \ldots c_ \mu ^ {\alpha_i ^ {(\mu)}}. $$
Then each $ A_ {n_1, n_2, \ldots, n_ \mu} $ has been multiplied by $ c_1 ^ {n_1} c_2 ^ {n_2} \ldots c_ \mu ^ {n_ \mu} $. If all $ A $ vanish upon replacing the indeterminates $ u_i $ with the numbers $ a_i $, they also vanish if all $ u_i $ are replaced with $ a_ic_1 ^ {\alpha_i ^ {(1)}} c_2 ^ { \alpha_i ^ {(2)}} \ldots c_ \mu ^ {\alpha_i ^ {(\mu)}} $.  Any Dirichlet series that arises in this manner thus satisfies the differential equation (51) formally. However, the resulting series are
$$ F (c_1e ^ {- \lambda ^ {(1)} s}, c_2e ^ {\lambda ^ {(2)} s}, \ldots, c_ \mu e ^ {- \lambda ^ {(\mu )} s}), $$
and converge for sufficiently small $ c_1, c_2, \ldots, c_ \mu $ in the case where $ F $ consists of positive powers of $ x, y, z, \ldots $ and is convergent. We formulate this as

\hypertarget{Theorem 18} {}
\begin{theorem}
Let $ F (x, y, z, \ldots) $ be a power series iterated over positive integer powers of $ x, y, z, \ldots $ (with at most finitely many negative powers) and converging in a neighborhood of the origin.  Suppose that the Dirichlet series
\begin{align}
F (e ^ {- \lambda ^ {(1)} s}, e ^ {\lambda ^ {(2)} s}, e ^ {\lambda ^ {(3)} s}, \ldots),
\end{align} 
   where $ \lambda ^ {(1)}, \lambda ^ {(2)}, \lambda ^ {(3)}, \ldots $ are linearly independent and positive, satisfies an algebraic differential equation with numerical coefficients.  Then the function
\begin{align}
F (c_1e ^ {- \lambda ^ {(1)} s}, c_2e ^ {\lambda ^ {(2)} s}, c_3e ^ {- \lambda ^ {(3)} s}, \ldots),
\end{align}
satisfies the same equation for any choice of constants $ c_1, c_2, c_3, \ldots $, provided it can be defined for the chosen values of said constants. If $ F (x, y, z, \ldots) $ is a formal Laurent expansion, and (52) formally satisfies an algebraic differential equation with constant coefficients, then every series (53) also formally satisfies the same differential equation.
 \end{theorem}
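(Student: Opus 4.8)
The plan is to exploit the weighted homogeneity already visible in the discussion preceding the theorem, and then to upgrade the resulting \emph{formal} identity to an analytic one by invoking convergence together with the uniqueness theorem for Dirichlet series.

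First I would set up the formal machinery: replace each coefficient $a_i$ of (49) by an indeterminate $u_i$, substitute the resulting formal series $\varphi(s)=\sum u_i e^{-\lambda_i s}$ into the differential polynomial $\Phi$, and collect the outcome as a formal Dirichlet series $\sum A_{n_1,\ldots,n_\mu} e^{-(n_1\lambda^{(1)}+\cdots+n_\mu\lambda^{(\mu)})s}$, where each $A_{n_1,\ldots,n_\mu}$ is a polynomial in the $u_i$ (and in the $\lambda_i$ and the coefficients of $\Phi$). The decisive structural fact, which rests on the \emph{linear independence over the integers} of $\lambda^{(1)},\ldots,\lambda^{(\mu)}$, is that only monomials $u_{j_1}u_{j_2}\cdots$ satisfying $\alpha_{j_1}^{(\nu)}+\alpha_{j_2}^{(\nu)}+\cdots=n_\nu$ for every $\nu$ can enter $A_{n_1,\ldots,n_\mu}$; that is, $A_{n_1,\ldots,n_\mu}$ is homogeneous of multi-weight $(n_1,\ldots,n_\mu)$ for the grading $\deg u_i=(\alpha_i^{(1)},\ldots,\alpha_i^{(\mu)})$.

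Next I would record the key scaling identity. Under the substitution $u_i \mapsto a_i\, c_1^{\alpha_i^{(1)}}\cdots c_\mu^{\alpha_i^{(\mu)}}$ each monomial of $A_{n_1,\ldots,n_\mu}$, and hence $A_{n_1,\ldots,n_\mu}$ itself, is multiplied by $c_1^{n_1}\cdots c_\mu^{n_\mu}$; this is a polynomial identity valid for all $c_\nu$. I would then observe that this substitution is precisely the one carrying $\varphi(s)$ into the series $F(c_1 e^{-\lambda^{(1)}s},\ldots,c_\mu e^{-\lambda^{(\mu)}s})$ of (53), since the term of $F$ of multi-index $(\alpha_i^{(1)},\ldots,\alpha_i^{(\mu)})$ contributes $a_i\,c_1^{\alpha_i^{(1)}}\cdots c_\mu^{\alpha_i^{(\mu)}} e^{-\lambda_i s}$. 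By hypothesis (52) satisfies $\Phi$, so (taking $c_\nu=1$) every $A_{n_1,\ldots,n_\mu}(a_i)$ vanishes; the scaling identity then forces $A_{n_1,\ldots,n_\mu}(a_i c_1^{\alpha_i^{(1)}}\cdots)=c_1^{n_1}\cdots c_\mu^{n_\mu}\cdot 0=0$ for every choice of the $c_\nu$. This already proves the second (formal Laurent) assertion: every series (53) formally satisfies (51).

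Finally I would pass from formal to genuine satisfaction in the convergent case. When $F$ has only positive powers (up to finitely many), (53) is an absolutely convergent Dirichlet series on some half-plane for each admissible $c_\nu$, and, exactly as in the proof of Theorem 1, the formally computed $\Phi$ of it is represented there by the absolutely convergent product of the corresponding convergent series. Since all its Dirichlet coefficients vanish, the uniqueness theorem forces the analytic function obtained by substituting (53) and its derivatives into $\Phi$ to vanish identically, so (53) satisfies (51) wherever it is defined. The one point demanding care is the homogeneity claim of the second paragraph: it is precisely the integer linear independence of the $\lambda^{(\nu)}$ that guarantees the exponent $n_1\lambda^{(1)}+\cdots+n_\mu\lambda^{(\mu)}$ determines $(n_1,\ldots,n_\mu)$ uniquely, so that monomials of distinct multi-weights never collapse into a common Dirichlet exponent; without it the clean scaling behaviour of the $A$ would break down and the argument would collapse.
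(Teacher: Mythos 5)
Your proposal is correct and follows essentially the same route as the paper: Ostrowski's own argument (given in the paragraphs immediately preceding the theorem) is exactly the substitution of indeterminates $u_i$ for the $a_i$, the observation that linear independence of the $\lambda^{(\nu)}$ forces each coefficient $A_{n_1,\ldots,n_\mu}$ to be weighted-homogeneous of multi-weight $(n_1,\ldots,n_\mu)$, and the resulting scaling identity under $u_i\mapsto a_i c_1^{\alpha_i^{(1)}}\cdots c_\mu^{\alpha_i^{(\mu)}}$, with convergence and the uniqueness theorem (as in Theorem 1) upgrading the formal identity to an analytic one. Your writeup makes explicit a step the paper leaves implicit, namely that the hypothesis that (52) satisfies the equation analytically yields the vanishing of all the formal coefficients $A_{n_1,\ldots,n_\mu}(a_i)$ via the same uniqueness argument, but the substance is identical.
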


From this theorem we can immediately conclude the following:

\hypertarget {Theorem 19} {}
\begin {theorem}
Let $ F (x, y, z, \ldots) $ be a power series iterated over positive integer powers of $ x, y, z, \ldots $ (or containing at most finitely many negative powers) that converges in a neighborhood of the origin.  Now suppose that for any system of positive linearly independent $ \lambda ^ {(1)}, \lambda ^ {(2)}, \ldots $, the Dirichlet series
\begin {align}
f (s) = F (e ^ {- \lambda ^ {(1)} s},
e ^ {- \lambda ^ {(2)} s}, \ldots)
\end {align}
satisfies an algebraic differential equation
\begin {align}
\Phi (f ^ {(i)} (s)) = 0.
\end {align}
Then $ F (x, y, z, \ldots) $ satisfies  an algebraic partial differential equation. 
If $ F (x, y, z, \ldots) $ is  Laurent expansion and (54) formally satisfies an algebraic differential equation, then $ F (x, y, z, \ldots) $ formally satisfies an algebraic partial differential equation.
\end {theorem}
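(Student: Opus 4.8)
The plan is to turn the one-variable identity $\Phi(f^{(i)}(s))=0$ into an identity of power series in the several variables of $F$, by observing that under the substitution $x_k=e^{-\lambda^{(k)}s}$ the operation $d/ds$ becomes a single polynomial-coefficient differential operator, and then using linear independence of the $\lambda^{(k)}$ together with the uniqueness theorem for Dirichlet series to conclude that the pulled-back equation holds identically rather than merely along the curve $s\mapsto(e^{-\lambda^{(1)}s},\ldots)$.

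First I would fix one admissible system of positive, linearly independent exponents $\lambda^{(1)},\ldots,\lambda^{(\mu)}$, where $\mu$ is the number of arguments of $F$; the hypothesis in fact grants such an equation for \emph{every} system, but a single one is all that is needed here (the full strength is what \hyperlink{Theorem 20}{Theorem 20} exploits in the converse direction). By \hyperlink{Proposition 1}{Proposition 1} the equation $\Phi(f^{(i)}(s))=0$ may be taken with $\Phi$ a nonzero polynomial carrying no explicit dependence on $s$, i.e. with numerical coefficients. I then introduce the weighted Euler operator $D=-\sum_{k=1}^{\mu}\lambda^{(k)}x_k\,\partial/\partial x_k$. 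A single application of the chain rule gives $\frac{d}{ds}\bigl(G(e^{-\lambda^{(1)}s},\ldots,e^{-\lambda^{(\mu)}s})\bigr)=(DG)(e^{-\lambda^{(1)}s},\ldots,e^{-\lambda^{(\mu)}s})$ for any power series $G$, so by induction $f^{(i)}(s)=(D^{i}F)(e^{-\lambda^{(1)}s},\ldots,e^{-\lambda^{(\mu)}s})$. Since $D$ multiplies each monomial $x^{n}$ by the scalar $-\sum_k\lambda^{(k)}n_k$, each $D^{i}F$ is again a power series with the same domain of convergence as $F$, and because $\Phi$ has numerical coefficients the composite $G:=\Phi(D^{i}F)$ is a genuine power series in $x_1,\ldots,x_\mu$ whose restriction to the curve is precisely $\Phi(f^{(i)}(s))=0$.

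Next I would pass from the curve back to the ambient power series. Writing $G=\sum_n c_n\,x_1^{n_1}\cdots x_\mu^{n_\mu}$ and substituting $x_k=e^{-\lambda^{(k)}s}$ produces $\sum_n c_n\,e^{-(n_1\lambda^{(1)}+\cdots+n_\mu\lambda^{(\mu)})s}=0$; by linear independence of the $\lambda^{(k)}$ over the integers the exponents attached to distinct multi-indices $n$ are distinct, so this is an honest Dirichlet series with distinct exponents vanishing identically in $s$, and the uniqueness theorem quoted in the introduction forces every $c_n=0$. Hence $\Phi(D^{i}F)\equiv0$ as a power series. Re-expanding, $D^{i}$ is a differential operator of order $i$ with polynomial coefficients, so $\Phi(D^{i}F)=0$ is an algebraic partial differential equation for $F$; it is nontrivial because the operators $1,D,\ldots,D^{n}$ involve strictly increasing orders of differentiation, whence the forms $D^{i}F$ are algebraically independent over $\mathbb{C}(x_1,\ldots,x_\mu)$ and a nonzero $\Phi$ yields a nonzero differential polynomial. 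The formal Laurent case is identical but shorter: to \emph{formally} satisfy $\Phi(f^{(i)}(s))=0$ means exactly that the Dirichlet coefficients of $\Phi(f^{(i)}(s))$ all vanish, and linear independence identifies those coefficients with the $c_n$, so $\Phi(D^{i}F)=0$ formally with no appeal to the analytic uniqueness theorem.

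The only genuine obstacle is the bookkeeping that matches the two pictures: verifying $f^{(i)}(s)=(D^{i}F)|_{\mathrm{curve}}$ and checking that, after formal rearrangement, $\Phi(D^{i}F)$ reassembles coefficient by coefficient into the Dirichlet expansion of $\Phi(f^{(i)}(s))$ with exponents exactly the integer combinations $\sum_k n_k\lambda^{(k)}$. Once linear independence guarantees these exponents are pairwise distinct, the whole argument collapses onto the uniqueness theorem in the convergent case and onto the definition of formal satisfaction in the Laurent case, while the nontriviality of the resulting equation is secured by the order filtration of the $D^{i}$.
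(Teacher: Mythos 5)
Your proof is correct, but it follows a genuinely different route from the paper's. Ostrowski derives Theorem 19 as a corollary of Theorem 18: using the weighted homogeneity of the Dirichlet coefficients (the constraints $\alpha_{j_1}^{(\nu)}+\alpha_{j_2}^{(\nu)}+\ldots=n_\nu$ forced by linear independence of the $\lambda^{(\kappa)}$), he first shows that $F(c_1e^{-\lambda^{(1)}s},\ldots,c_\mu e^{-\lambda^{(\mu)}s})$ satisfies the \emph{same} equation $\Phi=0$ for arbitrary constants $c_k$; he then expands the $s$-derivatives by the chain rule, observes that the $m$-th partial derivatives of $F$ first appear in the $m$-th $s$-derivative, and sets $s=0$ to obtain a partial differential equation in the $c_k$, which are then promoted to independent variables. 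You bypass Theorem 18 entirely: you pull $d/ds$ back to the Euler operator $D=-\sum_k\lambda^{(k)}x_k\,\partial/\partial x_k$, so that $\Phi(f^{(i)}(s))$ is the restriction of the power series $\Phi(D^iF)$ to the curve, and then use linear independence of the $\lambda^{(k)}$ together with the uniqueness theorem for Dirichlet series (the exponents $\sum_k n_k\lambda^{(k)}$ are distinct, bounded below, and tend to $+\infty$, so the theorem applies) to conclude $\Phi(D^iF)\equiv 0$ identically. The two arguments land on the same equation --- setting $s=0$ in Ostrowski's expansion produces precisely $\Phi\bigl((D^iF)(c)\bigr)=0$ --- and both invoke linear independence at the decisive moment, Ostrowski inside the proof of Theorem 18 and you inside the uniqueness step. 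Your route is more self-contained and makes the structure of the resulting equation explicit, which pays off in your clean nontriviality argument (the forms $D^0F,\ldots,D^nF$ carry strictly increasing orders of differentiation, hence are algebraically independent over the rational functions in $x_1,\ldots,x_\mu$); Ostrowski's route establishes the introduction of arbitrary constants once, as a result of independent interest that is reused later in \S 7, and covers the formal Laurent case by the second half of Theorem 18. Your treatment of the formal case --- reading the coefficient identities directly off the definition of formal satisfaction, with no appeal to convergence --- is equally valid.
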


\begin {proof}
We can assume $ \Phi (f ^ {(i)}) $ to be a polynomial in the $ f ^ {(i)} (s) $ with constant coefficients. Then $ F (c_1e ^ {- \lambda ^ {(1)} s}, c_2e ^ {- \lambda ^ {(2)} s}, \ldots) $ also satisfies the differential equation (55). Now every derivative with respect to $ s $ of $ F (c_1e ^ {- \lambda ^ {(1)} s}, c_2e ^ {- \lambda ^ {(2)} s}, \ldots) $ consists of partial derivatives of $ F (x, y, z, \ldots) $
with respect to its arguments, for which the values $ c_1e ^ {- \lambda ^ {(1)} s}, c_2e ^ {- \lambda ^ {(2)} s}, \ldots $ are plugged in ex post facto, along with the exponentials $ c_1e ^ {- \lambda ^ {(1)} s}, \ldots $ and the numbers $ \lambda ^ {(i)} $. If (55) is of order $ m $, we observe that the $ m $ -th derivative of
$ F (c_1e ^ {- \lambda ^ {(1)} s}, \ldots) $
is equal to:

$$ \sum [\frac {\partial ^ m F (x, y, z, \ldots)}
{\partial x ^ a \partial y ^ b \ldots}]
_ {\substack {x = c_1e ^ {- \lambda ^ {(1)} s} \\
y = c_2e ^ {- \lambda ^ {(2)} s}} \\
\ldots}
c_1 ^ a (- \lambda ^ {(1)}) ^ a
e ^ {- a \lambda ^ {(1)} s}
c_2 ^ b (- \lambda ^ {(2)}) ^ b
e ^ {- b \lambda ^ {(2)} s}
\ldots
+ \ldots
$$
 The $ m $ -th partial derivatives of $ F (x, y, z , \ldots) $ do not yet appear in the lower order derivatives of $ F (c_1e ^ {- \lambda ^ {(1)} s}, \ldots) $ with respect to $ s $. Next, by setting $ s = 0 $, (55) results in an algebraic $m$-th order partial differential equation for $ F (c_1, c_2, c_3, \ldots) $ as a function of
the constants $ c_1, c_2, c_3, \ldots $ which can once again be replaced by the variables $ x, y, z, \ldots $.
\end {proof}

We now return to the power series that are iterated over arbitrarily increasing powers of $ x $ and represent algebraic-transcendent functions. A result applicable to such series and corresponding to the first half of \hyperlink{Theorem 19}{Theorem 19}, can be directly transferred and verified from said theorem. However, we will propose a direct fashion in which the theorems corresponding to \hyperlink{Theorem 18}{Theorems 18} and \hyperlink{Theorem 19}{19} can be obtained in greater generality. Let $ \lambda_1, \ldots, \lambda_ \mu $ be arbitrary \textit{linearly independent} numbers, potentially complex. Let $ F (x, y, z, \ldots) $ be an arbitrary Laurent expansion in $ \mu $ variables $ x, y, \ldots $, and assume
$ F (x ^ {\lambda_1}, x ^ {\lambda_2}, \ldots) $ satisfies
an algebraic differential equation in $ x $. We moreover assume that
$ F (x ^ {\lambda_1}, x ^ {\lambda_2}, \ldots) $, expanded in powers of $ x $, has a sequence of exponents with a single accumulation point at infinity. This is to say that
$$
F (x ^ {\lambda_1}, x ^ {\lambda_2}, \ldots)
= f (x)
= \sum a_ix ^ {n_i}
\hspace{30pt}
\lim_ {i \to \infty} n_i = \infty
$$
\textit{Assume $ \lambda_i $ real at first and let $ f (x) $ (and by extension every derivative of $f(x)$) have a domain of absolute convergence.} By winding around the origin $m$ times, 
$ F (x ^ {\lambda_1}, x ^ {\lambda_2}, \ldots) $
becomes the integral
$ F (e ^ {2m \pi \lambda_1i} x ^ {\lambda_1},
e ^ {2m \pi \lambda_2i} x ^ {\lambda_2},
\ldots) $ of the 
 same differential equation
$ \Phi (f ^ {(i)} (x)) = 0 $. The coefficients of $ \Phi $ may be taken as rational in $ x $. If no integer linear combination of the $\lambda_i$ is rational, we can take the exponentials
$ e ^ {2m \pi \lambda_1i},
e ^ {2m \pi \lambda_2i},
\ldots $
to be arbitrarily close to the numbers $ c_1, c_2, \ldots $ of absolute value $1$ using a well-known theorem on Diophantine approximations. If we thus let $m$ become infinite in a suitable manner, it follows that
$ F (c_1 x ^ {\lambda_1},
c_2 x ^ {\lambda_2}, \ldots) $ also satisfies the differential equation $ \Phi (f ^ {(i)} (x)) = 0 $.
From this the same inferences as above result in $ F (x, y, \ldots) $ satisfying an algebraic partial differential equation. If the $\lambda_i$ satisfy a relation of the form
$ p_1 \lambda_1 + p_2 \lambda_2 + \ldots
+ p_ \mu \lambda_ \mu = p, $
where $ p_1 \ldots p_ \mu, p $ are integers and $ p \neq0 $, we can introduce arbitrary constants $c_1,c_2,...,c_{\mu - 1}$ into the integral
$ F (x ^ {\lambda_1}, \ldots,
x ^ {\lambda_ \mu}) $ in the same manner as above:
$ F (c_1 ^ {p_ \mu} x ^ {\lambda_1},
c_2 ^ {p_ \mu} x ^ {\lambda_2}, \ldots,
c_1 ^ {- p_1} c_2 ^ {- p_2} \ldots
c _ {\mu-1} ^ {- p _ {\mu-1}} x ^ {\lambda_ \mu}) $. 
Once again it follows by substituting $ x = cx '$ that 
$ F (\gamma_1 x ^ {\lambda_1}, \ldots,
\gamma_ \mu x ^ {\lambda_ \mu}) $ thus satisfies
an algebraic differential equation in $ x $ if $ \gamma_1, \ldots, \gamma_ \mu $
are arbitrary constants of absolute value $1$, - although not necessarily the same equation $ \Phi (f ^ {(i)}) = 0 $. From this we again conclude that $ F (x, y, z, \ldots) $ satisfies an algebraic partial differential equation. In both cases $ F (x, y, z, \ldots) $ need not have a convergence domain in the usual sense, in which case $ F (x, y, z, \ldots)$ \textit{formally} satisfies an algebraic partial differential equation.\\

We now drop the assumption of the absolute convergence of $ f (s) $, and assume that $ f (s) $ \textit{formally} satisfies an algebraic differential equation. However, we additionally assume that all but finitely many exponents $ n_i $ have the same angle from within the aperture $ y <\pi $. At the same time as $ f (s) $ we will consider the series
$$\varphi (x) = \sum u_i x ^ {n_i},$$
where $ u_i $ are indefinite. By plugging $ \varphi (x) $ into $ \Phi (\varphi ^ {(i)}) $, the series $\sum A_k(u_i)x^{m_k},$ emerges, where $ \lim_ {i \to \infty} m_i = \infty $ and $ A_k (u_i) $ only depends on a finite number of $ u_i $,
as with our assumption about the $ n_i $. $ A_k (a_i) = 0 $ for every $ k $.  Once again, we separate our problem into two cases:
\begin {enumerate}
\item No integer linear combination of the $ \lambda_i $ is rational, or there is a rational linear expression of the $ \lambda_i $ that is rational, in which case $ \Phi $ cannot explicitly depend on $ x $.
By replacing $ F (x ^ {\lambda_1}, \ldots,
x ^ {\lambda_ \mu}) $
with
$ F (c_1x ^ {\lambda_1}, \ldots, c_ \mu x ^ {\lambda_ \mu}) $,
 where $ c_1, \ldots, c_ \mu $ are arbitrary constants, each $ a_i $ becomes multiplied by a product of powers of the $ c_i $; at the same time the $ A_k (a_i) $ are simply multiplied by a product of powers of the $ c_i $, meaning they all remain equal to 0. We see that $ F (c_1 x ^ {\lambda_1}, \ldots,
c_ \mu x ^ {\lambda_ \mu}) $ thus also satisfies the differential equation $ \Phi (f ^ {(i)}) $. And from this we can in turn conclude that $ F (x, y, z, \ldots) $ satisfies an algebraic partial differential equation. 
\item However, if there is a linear relation
$ p_1 \lambda_1 + p_2 \lambda_2 + \ldots
+ p_ \mu \lambda_ \mu = p $,
where $ p_1 \ldots p_ \mu, p $
are integers and p is non-zero, and if $ \Phi $ explicitly contains the independent variable $ x $, we again construct the series
\begin{align}
F (c_1 ^ {p_ \mu} x ^ {\lambda_1},
c_2 ^ {p_ \mu} x ^ {\lambda_2}, \ldots,
c_1 ^ {- p_1} c_2 ^ {- p_2} \ldots
c _ {\mu-1} ^ {- p _ {\mu-1}} x ^ {\lambda_ \mu}),
\end{align}
where $c_1,c_2,...,c_{\mu-1}$ are to be understood as arbitrary non-zero constants.
If we plug this series into $ \Phi $, then the expressions $ A_k (a_i) $ are only multiplied by a product of powers of the $ c_i $, again remaining 0. Therefore, (56) also formally satisfies the differential equation $ \Phi (f ^ {(i)}) $, and from here, as above, we again conclude, in the case of $f(s)$ converging absolutely, that $ F (x, y, z, \ldots) $ formally satisfies an algebraic partial differential equation. If $ F (x, y, z, \ldots) $ also has a domain of convergence in the usual sense, then the representation of $F(x,y,z,\ldots)$ as a \textit{function} also satisfies the aforementioned differential equation.
\end {enumerate}

The results above on power series which are iterated over arbitrary powers of the independent variables are of interest insofar as such series expansions occur very often when studying the behavior of integrals of ordinary differential equations near a singular point. Specifically here, Poincaré, Picard and others\footnote{cf. E. Picard, Traité d'Analyse \textbf {3}, (2. éd.), Pp. 1–40} developed a method to reduce the task of finding series expansions for such integrals to the construction of functions of several variables $F(x,y,z,...)$ which satisfy certain partial differential equations that can be easily derived from what has been prescribed.  Additionally, we wish to obtain series expansions along the formula
$$ f (x) = F (c_1 x ^ {\lambda_1},
c_2 x ^ {\lambda_2}, \ldots)$$
from these.
However, these expansions rely on a great variety of different assumptions in the work of Poincaré and Picard, and the appearance of partial differential equations seems like a very odd gimmick. The results given above provide a certain \textit{reversal} of these methods in the case of algebraic differential equations - In the next section we will delve into the case of analytic differential equations, and in particular we will generalize \hyperlink{Theorem 12}{Theorem 12} to the case of complex exponents. – \\

We now return to the proof of \hyperlink{Theorem 19}{Theorem 19}, but now make the assumption that \textit{for sufficiently large positive $ \lambda ^ {(i)} $}, the function (54)
$$F (e ^ {- \lambda ^ {(1)} s},
e ^ {- \lambda ^ {(2)} s}, \ldots)$$
\textit{satisfies the differential equation (55), which is assumed independent of the $ \lambda ^ {(i)} $.} Similar to the previous case we again introduce arbitrary constants into the integral (54), but these may now be denoted by $ x, y, z, \ldots $. We thus get the function
$$ f (s) = F (x e ^ {- \lambda ^ {(1)} s},y e ^ {- \lambda ^ {(2)} s},\ldots) $$
which satisfies an algebraic differential equation $ \Phi (f ^ {(i) }) = 0 $ in $s$ which is independent from  $ x, y, z, \ldots, \lambda ^ {(1)}, \lambda ^ {(2)}, \ldots $. We now convert the differential equation into a partial differential equation in $ x, y, z, \ldots $, where $ s $ may remain indefinite for the time being. We thus get a partial differential equation in which the highest - $ m $-th - partial derivatives of $ F $ occur in the expression:
$$
\sum [\frac
{\partial ^ m F (x', y', \ldots)}
{\partial {{x'} ^ a} \partial{ {y'} ^ b} \ldots}]
_ {\substack {x '= x e ^ {- \lambda ^ {(1)} s} \\
y '= y e ^ {- \lambda ^ {(2)} s} \\
\ldots}}
x ^ a y ^ b \ldots
(- \lambda ^ {(1)}) ^ a
(- \lambda ^ {(2)}) ^ b
\ldots
e ^ {- a \lambda ^ {(1)} s
-b \lambda ^ {(2)} s- \ldots}
$$
By expressing $ x, y, \ldots $ as $ x '= xe ^ {- \lambda ^ {(1)} s},
y '= ye ^ {- \lambda ^ {(2)} s}, \ldots $,
we obtain a partial differential equation for $ F (x ', y', z ', \ldots) $, in which the highest derivatives of $ F $ occur in the expression:
$$
\sum (\frac
{\partial ^ m F (x ', y', \ldots)}
{\partial {x '}^ a \partial {y'} ^ b \ldots})
{x'}^ a {y'} ^ b \ldots
(- \lambda ^ {(1)}) ^ a
(- \lambda ^ {(2)}) ^ b
\ldots
$$
Since this differential equation holds for arbitrary values of $ \lambda ^ {(i)} $, it allows the $ m $ -th partial derivatives of $ F (x ', y', \ldots) $ to be expressed algebraically using the derivatives of lower order and $ x ', y', \ldots $ - \textit{$ F (x ', y', \ldots) $ thus satisfies an algebraic Mayer system.} 
This result is important because we shall now prove: \textit{if an analytic function $ F (x, y, \ldots) $ has the property that for any linearly independent system of real values $ \lambda_1, \lambda_2, \ldots $ occurring in any prescribed intervals, the function $ F (e ^ {- \lambda_1 s}, e ^ {- \lambda_2 s}, \ldots) $ of $ s $ is algebraic-transcendent, then $ F (e ^ {- \lambda_1 s}, e ^ {- \lambda_2 s}, \ldots) $ satisfies a fixed algebraic differential equation for all $ \lambda_i $.}
This does not follow directly from \hyperlink{Proposition 5}{Proposition 5}, since we want to assume $ \lambda_1, \lambda_2, \ldots $ as linearly independent, but the proof proceeds similarly as that of \hyperlink{Proposition 5}{Proposition 5}. We want to specifically consider the case of three variables and assume $ \lambda_1 $ equals $-1$, which is permitted in the theorems from \S 5. If $ \lambda_3 $ is set as any fixed irrational value $ \lambda_0 $, then $ F (e ^ s, e ^ {- \lambda_2 s}, e ^ {- \lambda_0 s}) $ satisfies  an algebraic differential equation for each irrational $ \lambda_2 $ from the prescribed interval that is linearly independent of $1,$ and $\lambda_0$. Hence there exist products of powers 
$$ \Pi_1, \Pi_2, \ldots, \Pi_k $$
of the the derivatives of $ F (e ^ s, e ^ {- \lambda_2 s}, e ^ {- \lambda_0 s}) $ which become linearly dependent functions of $s$ for infinitely many $ \lambda_3 $.
Therefore their Wronskian determinant vanishes identically in $ s, \lambda_2 $, and there is a relation
$$ c_1 (\lambda_2) \Pi_1
+ c_2 (\lambda_2) \Pi_2
+ \ldots + c_k (\lambda_2) \Pi_k = 0, $$
from which we finally get a differential equation for $ F (e ^ s, e ^ {- \lambda_2 s}, e ^ {- \lambda_0 s})$ through differentiation and the elimination of $ c_i (\lambda_2) $, as was justified in \S6, which $ F (e ^ s, e ^ {- \lambda_2 s}, e ^ {- \lambda_0 s}) $ satisfies for every value of $ \lambda_2 $. Since such a differential equation exists for every irrational value $ \lambda_0 $ of $ \lambda_3 $ from a given interval, a repetition of the same procedure leads us to an algebraic differential equation satisfied by $ F (e ^ s, e ^ {- \lambda_2 s}, e ^ {- \lambda_3 s}) $ for every value of $ \lambda_2, \lambda_3 $. And from this it follows, according to the theorems of \S 5, that $ F (e ^ {- \lambda_1 s}, e ^ {- \lambda_2 s}, e ^ {- \lambda_3 s}) $ satisfies a fixed algebraic differential equation for any system of values $ \lambda_1, \lambda_2, \lambda_3 $. - We also note that through successive differentiation and eliminations, which can be justified without any additional work using Lemma \ref{lemma:sec3} from \S3, we can say that the coefficients of the algebraic differential equation satisfied by $ F (e ^ {- \lambda_1 s }, e ^ {- \lambda_2 s}, e ^ {- \lambda_3 s}) $ are independent of $ \lambda_1, \lambda_2, \lambda_3 $ and thus become ordinary rational numbers. - We can now formulate the proposition that stands next to \hyperlink{Theorem 15}{Theorem 15} and \hyperlink{Proposition 5}{Proposition 5}:

\hypertarget {Theorem 20} {}
\begin {theorem}
If an analytic function $ F (x, y, z, \ldots) $ has the property that $ F (e ^ {- \lambda_1 s}, e ^ {- \lambda_2 s}, e ^ {- \lambda_3 s}, \ldots ) $ is algebraic-transcendent for any real $s$, or even just for any real linearly independent $ \lambda_1, \lambda_2, \lambda_3, \ldots $ located in prescribed intervals, then $ F (x, y , z, \ldots) $ itself an algebraic-transcendent function of $ x, y, z, \ldots $, thus satisfies a Mayer system of algebraic partial differential equations.
\end {theorem}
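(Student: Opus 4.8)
The plan is to establish Theorem 20 in two stages, paralleling the way Proposition 5 feeds the Mayer-system construction of Theorem 19. The first and substantial stage is to promote the hypothesis --- that $F(e^{-\lambda_1 s}, e^{-\lambda_2 s}, \ldots)$ is algebraic-transcendent for \emph{each} admissible linearly independent system of exponents --- to the far stronger statement that $F(e^{-\lambda_1 s}, \ldots)$ satisfies one \emph{fixed} algebraic differential equation with numerical coefficients, valid simultaneously for all such systems. The second stage then converts that fixed equation into an algebraic Mayer system for $F(x, y, z, \ldots)$ by the ``arbitrary constants'' device already used above, and the theorem follows.

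For the first stage I would avoid a direct appeal to Proposition 5: its proof fixes all variables but one and lets the remaining one sweep out a continuum, a maneuver that here would break the required linear independence of the $\lambda_i$. Instead I would free one exponent at a time. Treating the three-variable case (the general case being verbatim the same), normalize $\lambda_1 = -1$, which is permitted by the substitution $x = e^{-s}$ and the theorems of \S5, and fix $\lambda_3 = \lambda_0$ at an arbitrary irrational value, leaving $\lambda_2$ to range over all values for which $1, \lambda_2, \lambda_0$ are linearly independent over the integers --- still an uncountable set. For each such $\lambda_2$ the function $F(e^s, e^{-\lambda_2 s}, e^{-\lambda_0 s})$ is algebraic-transcendent, so some finite system of products of powers $\Pi_1, \ldots, \Pi_k$ of its $s$-derivatives becomes linearly dependent over the constants. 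Because the set of such finite systems is countable while the admissible $\lambda_2$ are uncountable, one fixed system $\Pi_1, \ldots, \Pi_k$ serves infinitely many $\lambda_2$.

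The engine is then the Wronskian argument from \S6. The Wronskian in $s$ of $\Pi_1, \ldots, \Pi_k$ is analytic in $(\lambda_2, s)$ and vanishes identically in $s$ for infinitely many $\lambda_2$ in a bounded interval, hence vanishes identically; this yields a relation $c_1(\lambda_2)\Pi_1 + \cdots + c_k(\lambda_2)\Pi_k = 0$ with the $c_j$ analytic in $\lambda_2$ and not all zero, holding for \emph{every} $\lambda_2$. Differentiating in $s$ and eliminating the $c_j(\lambda_2)$ --- legitimate by Lemma \ref{lemma:sec3} of \S3, as justified in \S6 --- produces a single algebraic differential equation in $s$ obeyed by $F(e^s, e^{-\lambda_2 s}, e^{-\lambda_0 s})$ for all $\lambda_2$ with $\lambda_0$ still fixed. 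Running the same countability-plus-Wronskian step once more, now letting $\lambda_0 = \lambda_3$ vary, removes the dependence on $\lambda_3$; a concluding round of differentiation and elimination (again Lemma \ref{lemma:sec3}) purges the coefficients of their residual $\lambda$-dependence and makes them ordinary rational numbers, while the \S5 theorems reinstate the normalized $\lambda_1$. This gives the single fixed equation valid for every system $(\lambda_1, \lambda_2, \lambda_3)$, and by the same process for any number of variables.

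For the second stage I would repeat the computation behind Theorem 19. Introducing arbitrary constants $x, y, z, \ldots$, the function $f(s) = F(x e^{-\lambda^{(1)} s}, y e^{-\lambda^{(2)} s}, \ldots)$ satisfies the same fixed equation $\Phi(f^{(i)}) = 0$; rewriting $\Phi = 0$ as an identity in $x, y, z, \ldots$ and substituting $x' = x e^{-\lambda^{(1)} s}$, and so on, displays the $m$-th order partial derivatives of $F$ linearly, with coefficients that are the products of powers $(-\lambda^{(1)})^a (-\lambda^{(2)})^b \cdots$. Since the identity holds for \emph{all} $\lambda^{(i)}$, these products are linearly independent, so each top-order partial derivative can be isolated and solved algebraically in terms of lower-order derivatives and the variables --- exactly an algebraic Mayer system for $F(x, y, z, \ldots)$. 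The main obstacle is the first stage, specifically the need to keep the $\lambda_i$ linearly independent while still extracting an uncountable supply of admissible exponents for the cardinality argument; the one-exponent-at-a-time bootstrap, together with the repeated use of Lemma \ref{lemma:sec3} both to justify the eliminations and to clear the coefficients of their surviving $\lambda$-dependence, is what lets the argument succeed where a direct invocation of Proposition 5 cannot.
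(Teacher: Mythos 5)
Your proposal follows essentially the same route as the paper: the same two-stage structure of first upgrading the hypothesis to a single fixed differential equation (by normalizing $\lambda_1=-1$, fixing $\lambda_3=\lambda_0$ irrational, freeing one exponent at a time, and running the countability-plus-Wronskian argument with differentiation and elimination via Lemma \ref{lemma:sec3}), and then converting that fixed equation into an algebraic Mayer system by the arbitrary-constants computation extending Theorem 19. The argument is correct and matches the paper's own proof in all essentials.
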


\newpage

\section{Power series not satisfying any analytic differential equations.}

The main goal of the developments within this section is the proof of the following, which will later be supplemented by \hyperlink{Theorem 22}{Theorem 22} with respect to questions of convergence.

\hypertarget{Theorem 21} {}
\begin {theorem}
Let $ F (x, y, y ', \ldots, y ^ {(n)}) $ be a power series iterated along positive powers of $ x, y-c, y'-c_1, \ldots, y ^ {(n )} - c_n $ and which converges in a certain neighborhood of the point $ x = 0, y = c, y '= c_1, \ldots, y ^ {(n)} = c_n $. In the differences $ y ^ {(i)} - c_i $, $ c_i $ are constant, some of which may be infinite. In this case, $ y ^ {(i)} - c_i $ is, as usual, to be understood as $ \frac1 {y ^ {(i)}} $. Suppose that the series
\begin {align}
y = \sum_ {i = 0} ^ \infty a_i x ^ {n_i}
\end {align}
formally satisfies the differential equation
\begin {align}
F (x, y, y ', \ldots, y ^ {(n)}) = 0.
\end {align}
Here the $ n_i $ are real or complex constants about which the following assumptions have to be made:
\begin{enumerate}
    \item Starting with a given $i$, their real parts are sorted in increasing order and converge to $ + \infty $.
    \item If not all $ n_i $ are real, let $ n_k $ be a non-real exponent whose real part $ r $ is as small as possible. If the real exponents $ n_i $ that are smaller than $ r $ are non-negative integers, or if there are no real exponents less than $ r $, we must assume that $ r $ has no non-negative integer value, and that there is only one exponent $ n_i $ whose real part is equal to $ r $.
\end{enumerate}
Let the numbers $ c, c_1, \ldots $ be the ``values'' of $ y $ and the derivatives of $ y $ for $ x = 0 $. \\

If one of the numbers $ c_i $ (and thus each subsequent one) is $ \infty $, then the formally constructed $ i $-th derivative of $ y $ must start with a power of $ x $ whose exponent has a negative real part . Then, according to our assumptions about the $n_i$, $ \frac {1} {y ^ {(i)}} $ can be expanded as a series iterated over powers of $ x $, where all exponents have positive real parts. These expansions are then to be plugged into $ F $ in place of $ \frac {1} {y ^ {(i)}} $. \\

Under these assumptions, all $ n_i $ can be expressed linearly with integer coefficients by a finite number of them.
\end {theorem}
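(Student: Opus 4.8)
The plan is to transcribe the proof of Proposition 3 and Theorem 6 almost verbatim, now with $x$ in place of $e^{-s}$ and ordinary $\tfrac{d}{dx}$ in place of the difference--differential operators, and to reduce the assertion to a comparison of leading exponents. Writing the formal solution as $y=\sum_i a_i x^{n_i}$ with $\operatorname{Re}(n_i)$ eventually strictly increasing to $+\infty$, I would split it at a large index, $y=A_i+R_i$ with $A_i=\sum_{t<i}a_t x^{n_t}$, and expand the left--hand side of $F(x,y,y',\dots,y^{(n)})=0$ about $A_i$ by Taylor's theorem:
$$ F = F(x,A_i,A_i',\dots)+\sum_{\varrho}F_\varrho\, R_i^{(\varrho)}+\bigl(\text{terms homogeneous of degree}\ge 2\ \text{in the }R_i^{(\varrho)}\bigr), $$
where $F_\varrho=\partial F/\partial y^{(\varrho)}$ is evaluated with the derivatives of $y$ inserted. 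Since $R_i^{(\varrho)}$ has initial exponent $n_i-\varrho$ and $\operatorname{Re}(n_i)\to\infty$, the degree $\ge 2$ part has initial exponents of real part at least $2\operatorname{Re}(n_i)-C$, with $C$ fixed by $F$ and the finitely many low exponents; so for large $i$ it cannot reach the linear part, exactly as the bounds $2\lambda_i-(n-2)|\lambda_0|$, $3\lambda_i-(n-3)|\lambda_0|,\dots$ functioned in (3).

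The main obstacle is the one step the introduction flags as ``entirely obvious in the algebraic case'': one must know that not all the $F_\varrho$ vanish identically after substitution, i.e.\ that $y$ may be taken to be a \emph{non-singular} integral of its equation. In \S1 this was free, since $F$ was chosen of minimal total degree $n$, whence each $F_\varrho$ has degree $<n$ and cannot be satisfied by the solution. For an analytic $F$ there is no degree to minimize, and $y$ could a priori be a singular integral, annihilating the entire linear term. The remedy is the auxiliary statement announced in the introduction, that a function satisfying an analytic differential equation also satisfies one of which it is not a singular integral. Its natural proof factors $F$ by the Weierstrass preparation theorem into a distinguished polynomial in the top derivative $y^{(n)}$ times a unit and then passes to the square--free part; the difficulty, and the reason a detour is needed, is that making $F$ regular in $y^{(n)}$ ordinarily demands a linear change of the variables $x,y,\dots,y^{(n)}$, and no such change is admissible, since these variables are bound together as a function and its successive derivatives. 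I would therefore isolate this non--singularity reduction as a separate lemma (the detour) and, once it is available, assume some $F_\varrho\not\equiv 0$.

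Granting that, let $b_\varrho x^{m_\varrho}$ be the initial term of each non--vanishing $F_\varrho$, and set $\Lambda_1=\min_\varrho\bigl(\operatorname{Re}(m_\varrho)-\varrho\bigr)$. Only finitely many $a_t$ feed these initial terms, so truncating $y$ high enough leaves them unchanged. The initial term of $\sum_\varrho F_\varrho R_i^{(\varrho)}$ then has real part $\Lambda_1+\operatorname{Re}(n_i)$ and coefficient $a_i\sum_\varrho b_\varrho\, n_i(n_i-1)\cdots(n_i-\varrho+1)$, a polynomial in $n_i$ whose falling factorials come from $\tfrac{d}{dx}$; this is the analogue of the factor in (4). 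Here the elementary fact that a non--zero polynomial has finitely many roots replaces the appeal to Proposition 2 — in the pure differential case there are no shift--exponentials — so for all large $i$ the coefficient is non--zero and the term survives. As $F\equiv 0$ formally, this surviving term must cancel against a term of $F(x,A_i,A_i',\dots)$. Every exponent occurring there, and every $m_\varrho$, lies in the $\mathbb{Z}$--module generated by $n_0,\dots,n_{i-1}$ together with $1$, the integer shifts arising from the powers of $x$ in $F$ and from the $-\varrho$ produced by differentiation. Matching the two exponents forces $n_i$ into the $\mathbb{Z}$--span of finitely many earlier $n_j$ (and $1$), which is the assertion; the single integer generator is the only trace of the fact that $\tfrac{d}{dx}$, unlike $\tfrac{d}{ds}$ on $e^{-\lambda s}$, fails to preserve exponents, and it is harmless for the intended applications.

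Finally, the hypotheses (2) on the complex exponents are precisely what legitimize the ``leading term $=$ smallest real part'' bookkeeping: demanding a \emph{unique} exponent of minimal real part $r$ among the non--real $n_i$, with $r$ non--integral in the stated cases, guarantees that each $R_i^{(\varrho)}$, and, when some $c_i=\infty$, each reciprocal expansion $1/y^{(i)}$, begins with a single clean power of $x$ of positive--real--part type, so that there are no ties in imaginary part at the critical real part and the Taylor expansion substitutes term by term as in the real case. I expect essentially all of the genuine difficulty to reside in the non--singularity lemma of the second paragraph; the exponent comparison itself is a routine transcription of \S1.
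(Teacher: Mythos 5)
Your exponent-comparison scheme (split $y=A_t+R_t$, Taylor-expand, isolate the linear term, observe that the coefficient of the surviving term $a_t\varphi(n_t)x^{n_t+k}$ is a nonzero polynomial in $n_t$ playing the role that Proposition 2 played in \S1, and force cancellation against $F(x,A_t,\dots)$ whose exponents lie in the $\mathbb{Z}$-module generated by $1,n_0,\dots,n_{t-1}$) is exactly the paper's, and you have correctly located the one step that does not transcribe from the algebraic case: one must know that $y$ does not formally annihilate all the partial derivatives $F_\varrho$. But you then leave that step as an unproven ``non-singularity lemma,'' and that lemma is where essentially the entire content of the paper's proof of this theorem resides; deferring it is a genuine gap, not a routine omission. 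Moreover, the ``natural proof'' you sketch for it --- prepare $F$ as a distinguished polynomial in the top derivative $y^{(n)}$ and pass to a square-free part --- is precisely the approach the paper explains cannot be carried out: to make $F$ (and $F_\varrho$) regular in $y^{(n)}$ one may need a linear substitution mixing $y^{(n)}$ with the other arguments, after which the variable one eliminates by resultants is no longer the top derivative, so the intended induction on the \emph{order} of the equation collapses. (Your stated reason, that linear changes of the arguments are inadmissible because they are bound together as a function and its derivatives, is not quite the obstruction: the elimination identity $R=D\Phi+D_1\Phi_\varrho$ is an identity of power series in the arguments $z_i=x,\,y-c,\dots$, into which the series for $y$ can still be substituted; what is lost is only the interpretation of the eliminated variable.)

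Ostrowski's actual detour replaces induction on the order by induction on a quantity that \emph{is} invariant under the linear substitutions the preparation theorem requires: the \emph{rank} of the power series $\Phi(z_i)$, the least number $\varrho$ of variables on which $\Phi$ depends after an invertible linear change of the $z_i$ and up to a unit factor. One takes $\Phi$ irreducible and of minimal rank among all analytic equations formally satisfied by $y$, transforms to the $\varrho$ essential variables $\zeta_1,\dots,\zeta_\varrho$, and, supposing some $\Phi_j$ were also formally annihilated, applies a further linear substitution among $\zeta_1,\dots,\zeta_\varrho$ making both $\Psi$ and $\Psi^*$ (the transforms of $\Phi$ and $\Phi_j$) regular in one new variable $u_1$; the resultant of their distinguished polynomials with respect to $u_1$ is a power series in $u_2,\dots,u_\varrho$, hence of rank at most $\varrho-1$, and transporting the identity $P=CX+C^*X^*$ back to the $z_i$ exhibits a lower-rank equation formally satisfied by $y$ --- contradiction. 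Without this argument (or a substitute for it) your proof does not go through, since an analytic $F$ has no total degree to minimize and $y$ could a priori be a singular integral of every equation it satisfies.
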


\begin {proof}
We denote the quantities $ x, y-c, \ldots, y ^ {i} -c_i, \ldots $ (or $ \frac {1} {y ^ {(i)}} $) by $ z_1, z_2, \ldots, z_ {i + 2}, \ldots $ and can then write (58) in the form:
\begin {align}
\Phi (z_1, z_2, \ldots) = 0,
\end {align}
where $ \Phi $ is an ordinary power series in $ z_1, z_2, \ldots $, which vanishes at the origin and converges in a neighborhood of the origin. We denote the first partial derivatives of $ \Phi $ with respect to $ z_1, z_2, \ldots $ by $ \Phi_1 (z_i), \Phi_2 (z_i), \ldots $ and claim \textit{that equation (59) can, if necessary, be replaced such that none of the equations hold formally:}
\begin {align}
\Phi_1 (z_i) = 0, \hspace {20pt}
\Phi_2 (z_i) = 0, \ldots
\end {align}
 The real difficulty of the following investigations lies in verifying the correctness of this claim. \\

We must first recall some facts from the theory of the divisibility of power series in several variables in the neighborhood of a point\footnote {See, for example, Weierstrass. Some theorems relating to the theory of the analytical functions of several variables, Collected Works, 2, p. 135-188}. - We assume that all power series to be considered converge in a neighborhood of the origin. - A power series $ \psi_1 (z_i) $ is said to be \textit{divisible} by another $ \psi_2 (z_i) $, if an equation exists:
$$ \psi_1 (z_i) = \psi_2 (z_i) m (z_i), $$
where $ m (z_i) $ is a power series. If each of the power series $ \psi_1 (z), \psi_2 (z) $ is divisible by the other, they are called \textit{equivalent}. Then $ m (z_i) $ does not vanish at the origin. A power series $ \psi (z_i) $ is called \textit{irreducible} if it is only divisible by those power series that are equivalent to it, i.e. if it cannot be written in the form:
$$ \psi (z_i) = \psi_1 (z_i) \psi_2 (z_i) $$
where both power series $ \psi_1 (z_i) $ and $ \psi_2 (z_i) $ vanish at the origin.
If one does not regard equivalent irreducible power series as possibly being distinct, then each power series that vanishes at the origin can be represented uniquely as a product of irreducible power series.
The proof of this last theorem relies on Weierstrass's \textit{preparation theorem}. The preparation theorem states: If for a power series $ \psi (z_1, z_2, z_3, \ldots) $ that vanishes at the origin, a power of $ z_1 $ occurs by itself, i.e. $ \psi (z_1,0,0, \ldots) $ does not vanish identically in $z_1$, then $ \psi (z_1, z_2, \ldots) $ is equivalent to a power series that is a \textit{polynomial} with respect to $ z_1 $. That is to say that the following applies:
$$ \psi (z_1, z_2, z_3, \ldots)
=
(z_1 ^ m + A_1 (z_2, z_3, \ldots) z_1 ^ {m-1}
+ \ldots
+ A_m (z_2, z_3, \ldots)) E (z_1, z_2, \ldots), $$
where $ E (z_1, z_2, z_3, \ldots) $ does not vanish at the origin and $ A_1 (z_2, z_3, \ldots), \ldots,$
$A_m (z_2, z_3, \ldots) $ are power series in $ z_2, z_3, \ldots $. - If, however, there is no isolated power of $z_1$, or of $ z_2 $, etc. in $ \psi (z_1, z_2, \ldots) $, a linear transformation of the variables $ z_i $ can be used so that for each of the new variables of $ \psi $ a power that is free of the other variables occurs, in which case the preparation theorem becomes applicable again. \\

If the preparation theorem were always applicable without an initial linear transformation, the proof of the claims made above would not yield any difficulties. In that case, we could eliminate the variable $ z_ {n + 2} $ from the equations $ \Phi (z_i) = 0 $ and $ \Phi_i (z_i) = 0 $ and thereby get an analytic differential equation, which $y$ formally satisfies and which would be of \textit{lower} order. Subsequently, a final application of this procedure would lead us to our goal. But since the preparation theorem is not always applicable, it seems that our current knowledge on analytic functions of several variables does not always allow the direct elimination of a \textit{specified} variable from two analytic equations\footnote {Let us mention the following problem: Two ordinary power series $ f (x, y, z), g (x, y, z) $ converge in a neighborhood of the origin and vanish at the origin. Does there always exists a power series $ h (x, y) $ that converges in a neighborhood of the origin, and vanishes there for all (and only) those pairs of sufficiently small $ x, y $, such that both power series $ f, g $ vanish for sufficiently small $z$? For example: One considers the curve constructed by the intersection of two surfaces that have an algebraic character at the origin. Does its projection onto any plane passing through the origin always have an algebraic character at the origin?}. In our case, however, it is possible to reach the goal by means of a detour. - \\

It is possible that a power series in $ \mu $ variables $ z_i $ is equivalent to a power series depending on fewer than $ \mu $ variables after a linear homogeneous invertible transformation of the $ z_i $. The \textit{rank} of $\psi(z_i)$ will denote the smallest number $ \varrho $ with the property that $ \psi (z_i) $ is equivalent to a power series in $ \varrho $ variables after a linear homogeneous invertible transformation of the $ z_i $.\\

If $ \psi (z_i) $ is of rank $ \varrho $, then every divisor of $ \psi (z_i) $ is also of rank $ \varrho $. Since if we transform the variables $ z_i $ into new variables $ \zeta_i $, every divisor of $ \psi (z_i) $ becomes a divisor of the transformed power series; but if the transformed power series is equivalent to one that only depends on $ \varrho $ variables $ \zeta_i $, then the same applies to each of its divisors, according to the theorem about the uniqueness of the decomposition of a power series into irreducible factors. \\

We now return to our question and choose a differential equation $ \Phi (z_i) $ formally satisfied by $y$, with $ \Phi (z_i) $ being of as small a rank $ \varrho $ as possible. If $ \Phi (z_i) = \Phi ^ {(1)} (z_i) \Phi ^ {(2)} (z_i) $, where $ \Phi ^ {(1)} $ and $ \Phi ^ {( 2)} $ are power series, then $ y $ formally satisfies one of the equations
$$ \Phi ^ {(1)} (z_i) = 0, \hspace {30pt}
\Phi ^ {(2)} (z_i) = 0. $$
Therefore we can take $ \Phi (z_i) $ to be an irreducible power series of rank $ \varrho $. There thus exists a linear homogeneous invertible transformation
\begin{align}
z_i = \sum_ {ik} \zeta_k, \hspace {30pt}
\zeta_i = \sum s_ {ik} z_k,
\end{align}
by which $ \Phi (z_i) $ is mapped to a power series
$ \overline {\Phi} (\zeta_i) $
to which the representation
$$ \Phi (z_i)
= \overline {\Phi} (\zeta_i)
= \Psi (\zeta_i) E (\zeta_i)$$
applies, where $ \Psi (\zeta_i) $ only depends on $ \varrho $ of the variables $ \zeta_i $, say $ \zeta_1, \ldots, \zeta_\varrho $, and $E$ does not vanish at the origin. If $ \frac {1} {E (\zeta_i)} = e (z_i) $, by replacing $ \Phi (z_i) $ with $ \Phi (z_i) e (z_i) $ if necessary, we can convert $ \Phi (z_i) $ directly into a power series $ \Psi (\zeta_i) $ which only depends on $ \varrho $ variables $ \zeta_1, \ldots, \zeta_ \varrho $, \textit{directly} using $(61)$. \textit{The differential equation $ \Phi (z_i) = 0 $ then has the desired property.} Since assuming this, $ y $ also formally satisfies an equation $ \Phi_1 (z_i) = \frac {\partial \Phi (z_i)} {\partial z_i} = 0 $ for instance. From the identity:
$$ \Phi_1 (z_i) = \frac {\partial \Phi (z_i)} {\partial z_i}
= \sum s_ {k1} \frac {\partial \Psi (\zeta_i} {\partial \zeta_k} = \Psi ^ * (\zeta_i) $$
it follows that $ \Phi_1 (z_i) $ becomes a power series $ \Psi ^ * (\zeta_i) $ in $ \zeta_1, \ldots, \zeta_ \varrho $ using (61). We now apply another linear homogeneous substitution to $ \Psi (\zeta_i) $ and $ \Psi ^ * (\zeta_i) $
\begin {align}
\zeta_i = \sum \tau_{ik} u_k,
\hspace {30pt}
u_i = \sum t_{ik} \zeta_k,
\hspace {30pt}
(i = 1, \ldots \varrho; k = 1, \ldots, \varrho),
\end {align}
through which we achieve that Weierstrass's preparation theorem becomes applicable to both power series. Namely, we can achieve that both $ \Psi $ and $ \Psi ^ * $, after the transformation, contain isolated powers of the same variable, say $u_1$. Since if the sums of the terms of the lowest dimension in $ \Psi (\zeta_i) $ and $ \Psi ^ * (\zeta_i) $ are $ a (\zeta_i) $ and $ b (\zeta_i) $, we only need (62) to be set up in such a way that after the transformation (62), all products of powers of $ u_i $ that are permissible according to dimension actually occur in the homogeneous forms $ a $ and $ b $ . We can therefore write
\begin{align*}
	\Psi(\zeta_i)=X(u_i)&=E(u_i)\Pi(u_i)=E(u_i)(u_1^\mu+A^{(1)}(u_i)u_1^{\mu-1}+\ldots+A^{(\mu)}(u_i))\\
\Psi^*&=X^*(u_i)=E^*(u_i)\Pi^*(u_i)=\\
&=E^*(u_i)(u_1^\nu+A^{*(1)}(u_i)u_1^{\nu-1}+\ldots+A^{*(\nu)}(u_i)).
\end{align*}
Here $ E (u_i), E ^ * (u_i) $ are power series that do not vanish at the origin, $A ^ {(i)}$ and $ A ^ {* (i)} $ however are power series only in $ u_2, \ldots, u_ \varrho $. If we now form the resultant $ P (u_2, \ldots, u_ \varrho) $ from $ \Pi $ and $ \Pi ^ * $ with respect to $ u_1 $, the following identity holds:
$$ P (u_2, \ldots, u_ \varrho)
= B (u_i) \Pi (u_i)
+ B ^ * (u_i) \Pi ^ * (u_i) $$
or
\begin{align}
P (u_2, \ldots, u_ \varrho)
= C (u_i) X (u_i)
+ C ^ * (u_i) X ^ * (u_i).
\end{align}

If we now go back to the $ \zeta_i $ and $ z_i $, $ P (u_2, \ldots, u_ \varrho) $ yields a power series $ R (z_i) $ whose rank is at most $ \varrho-1 $, and for which (63) gives an identical relation of the form
$$ R (z_i) =
D (z_i) \Phi (z_i)
+ D_1 (z_i) \Phi_1 (z_i). $$
From this it would follow, however, that $ y $ also satisfies the equation $ R (z_i) = 0 $ formally, while $ R (z_i) $ is of the rank $ \varrho-1 $. Thus the above claim has been proven. \\

We can therefore assume that none of the equations (60) are formally satisfied. 
Therefore, if one plugs the series expansion for $ y $ into one of the power series $ \Phi_i (z) $ in (60), and sorts the result according to the real parts of the exponents of the individual terms, which is possible according to our assumptions, not all terms vanish identically, and the expansion will start with terms in which the real parts of the exponents are as small as possible, say $ \nu_i $. 
Let these terms be:
\begin {align}
c_ {i, \kappa} x ^ {\lambda_ {i, \kappa}}.
\end {align}
From our premises it thus follows: If one truncates the series for $ y $ somewhere sufficiently far along, the initial terms of the expansion of $ \Phi_i (z) $ do not change. The exponents $ \lambda_ {i, \kappa} $ are thus homogeneous integer linear combinations of certain $ n_i $. We now split the series for $ y $ into two parts
$$ y =
A_t (x) + R_t (x)
= \sum_ {i = 0} ^ {t-1} a_ix ^ {n_i} + \sum_ {i = t} ^ \infty a_ix ^ {n_i} $$
and take $ t $ to be so large that starting with this $t$, when $ A_t(x) $ is substituted for $ y $, the initial terms of the $ \Phi_i $ are the sums of the expressions (64). We now denote $ A_t (x), A_t '(x), \ldots, A_t ^ {(n)} (x) $ as $ u_2, u_3, \ldots, u_ {n + 2}, $ and $ R_t ( x), R_t '(x), \ldots, R_t ^ {(n)} (x) $
as $ v_2, v_3, \ldots, v_ {n + 2} $, where the indices are chosen in correspondance with those of $ z_i $. The $ u_i $ and $ v_i $ do still depend on $ t $. \\

We want to expand $ \Phi (z_i) $ in terms of powers of $ v_i $. For this purpose we consider that as long as the real part of the initial term of $ y ^ {(i)} $ is not negative, $ z_ {i + 2} = y ^ {(i)} - c_i = (u_ {i +2} -c_i) + v_ {i + 2} $, so that we simply have to use Taylor's theorem directly to expand into powers of these $ v_ {i + 2} $. If, on the other hand, the initial term $ b_ix ^ {\mu_i} $ of $ y ^ {(i)} $ has an exponent with a negative real part, then
\begin {align}
z_ {i + 2} = \frac {1} {u_ {i + 2} + v_ {i + 2}}
= \frac1 {b_i} x ^ {- \mu_i}
\frac {1} {\frac {u_ {i + 2}} {b_i x ^ {\mu_i}} + \frac {v_ {i + 2}} {b_i x ^ {\mu_i}}}
= \frac1 {b_i} x ^ {- \mu_i}
\sum_ {k = 0} ^ {\infty}
(\frac {u_ {i + 2}} {b_i x ^ {\mu_i}} - 1+ \frac {v_ {i + 2}} {b_i x ^ {\mu_i}}) ^ k.
\end {align}
Here $ \frac {u_ {i + 2}} {b_i x ^ {\mu_i}} - 1 $ and $ \frac {v_ {i + 2}} {b_i x ^ {\mu_i}} $ have many terms with different exponents, and the real parts of the exponents of all terms of $ \frac {v_ {i + 2}} {b_i x ^ {\mu_i}} $ are, as soon as $ t $ is taken sufficiently large, greater than the exponents of the terms of $ \frac {u_ {i + 2}} {b_i x ^ {\mu_i}} $. We finally note that the expansions of the derivatives of $ z_ {i + 2} $ in $ v_ {i + 2} $, formally constructed from (65),  for $ v_ {i + 2} = 0 $, have exponents whose real parts are not negative. The formula (65) shows that one can use Taylor's theorem when expanding $ \Phi (z_i) $ in powers of $ v_i $. For $ i \geq 0 $ we denote generally $ u_ {i + 2} -c_i $ or $ \frac1 {u_ {i + 2}} $ as $ w_ {i + 2} $ (thus $ w_ {i + 2} = (z_ {i + 2}) _ {v_ {i + 2} = 0} $), then the expansion
\begin {align}
\Phi (z) = \Phi (x, w) + \sum v_i (\frac {\partial \Phi (x, z)} {\partial v_i}) _ {z_s = w_s} + \sum v_i v_k (\frac {\partial ^ 2 \Phi (x, z)} {\partial v_i \partial v_k}) _ {z_s = w_s} + \ldots
\end {align}
holds.  Here all differentiations are to be carried out in such a way that one first differentiates in $ z_i $ and then differentiates $ z_i $ in $ v_i $ (either from $ z_i = u_i + v_i $, or from $ z_i = \frac {1} {u_i + v_i} $). Next we consider, however, that the exponents of the individual terms in the expansions of the derivatives (in $v_i$) of $ z_i $ have a non-negative real part. Likewise, the partial derivatives of $ \Phi (z_i) $ with respect to the $ z_i $ only contain terms whose exponents have non-negative real parts, and this is even the case when we replace the $ z_i $ with the $ w_i $ . From all this it follows that the real parts of the exponents of the individual terms in the sums

$$
\sum v_i v_k (\frac {\partial ^ 2 \Phi (x, z)} {\partial v_i \partial v_k}) _ {z_s = w_s}, \hspace {30pt}
\sum v_i v_k v_l (\frac {\partial ^ 2 \Phi (x, z)} {\partial v_i \partial v_k \partial v_l}) _ {z_s = w_s}, \ldots
$$
\textit{are no smaller than} $ 2 \sigma_t $, where $ \sigma_t $ is the real part of the exponent of $ a_t ^ {n_t} $.\\

We have applied Taylor's theorem above, even the meaning of the expansion (66) is of course quite different from what is commonplace. The $ u_i $ and $ v_i $ are not sufficiently small \textit{numbers}, but rather power series in $ x $. Equation (66) says that if you replace the $z_i$ and the $v_i$ with their series expansions in $ x $ on the right and left-hand sides, and rearrange them purely formally, exactly the same terms appear on the right and left-hand sides. However, this rearrangement is always possible without taking limits, since according to our assumptions about the $ u_i $, only a \textit{finite} number of terms can occur that have the same exponent. \\

We could also interpret equations (65), (66) as\textit{ congruences} modulo sufficiently large positive powers $ x ^ \varrho $ of $ x $, in the sense that for all powers of $x$, we are to ignore those for whose exponents the real parts are larger than $\varrho$. Then on both sides of equations (65) and (66) only finitely many terms are considered for any $\varrho$, and equations (65), (66) say that the corresponding congruences hold for each $\varrho $. - And since the expansions resulting in equation (66) always have coefficients that are power series in $ x $ in which the exponents of the individual terms have non-negative real parts, it is clear why we are allowed to use Taylor's theorem (and the rule on differentiating function with respect to a function). \\

We now draw our attention to the first sum on the right hand side of (66)

$$ \sum v_i (\frac {\partial \Phi (x, z)} {\partial v_i}) _ {z_s = w_s}. $$
If for any $ i> 1 $ we have that $ z_i = u_i + v_i $, then the coefficient of $ v_i $ equals
$$ \Phi_i (x, w_2, \ldots, w_ {n + 2}). $$
But if $ z_i = \frac {1} {u_i + v_i} $, then the coefficient of $ v_i $ equals
$$ \Phi_i (x, w_2, \ldots, w_ {n + 2}) \cdot \frac {-1} {u_ {i + 2} ^ 2}. $$
We now wish to find the terms in (66) whose exponents have the smallest real part. For $ i $ such that $ z_i = u_i + v_i $, the initial terms of $ v_i (\frac {\partial \Phi} {\partial v_i}) _ {z_s = w_s} $ are equal to
\begin {align}
(i-2)! \binom {n_i} {i-2} a_t x ^ {n_t-i + 2} c_ {i, \kappa} x ^ {\lambda_ {i, \kappa}}
\hspace {30pt} (\kappa = 1,2, \ldots)
\end {align}
for sufficiently large $t$.
If the real part of $ n_t $ is $ \sigma_t $, then the real part of the exponents of the terms (67) is $ \sigma_t + \nu_t-i + 2 $. But if $ z_i = \frac {1} {u_i + v_i} $, then 
\begin {align}
- (i-2)! \binom {n_i} {i-2} a_t x ^ {n_t-i + 2} c_ {i, \kappa} x ^ {\lambda_ {i, \kappa}}
\frac {1} {b_i ^ 2} x ^ {- 2 \mu_i}
\hspace {30pt} (\kappa = 1,2, \ldots),
\end {align}
where $ b_i x ^ {\mu_i} $ is the initial term of $ u_i $. If the real part of $ \mu_i $ equals $ \varrho_i $, then the real part of the exponents of (68) is equal to
$$ \sigma_t + \nu_i-2 \varrho_i-i + 2. $$
Here the numbers $ \nu_i, \varrho_i $ are independent of $ t $. We now take the sum of the terms (67) and (68), bringing $ a_t x ^ {n_t} $ in front of the brackets, then subsequently sorting the terms inside the brackets in order of their exponents. We then obtain
$$ a_t x ^ {n_t} \sum \varphi_s (n_t) x ^ {k_s}, $$
where $ k_s $ are all distinct complex numbers and $ \varphi_s (n_t) $ are polynomials in $ n_t $ with fixed numerical coefficients that are independent of $ t $. Evidently, not all $ \varphi_s (n_t) $ vanish identically in $ n_t $, since in (67) and (68) $ i $ takes on many different values, and therefore $ n_t $ appears only once in the $ n $-th power, namely $ \binom {n_t} {n} $. If $ k $ is a $k_s$ for which the real part is as small as possible, and the coefficient $ \varphi (n_t) $ of $ x ^ k $ is non-zero, then $ \varphi (n_t) $ can only vanish for a finitely many $ n_t $. From a certain $ t $ onward, the term
$$ a_t \varphi (n_t) x ^ {n_t + k} $$
either cancels with a term from the second, third, $\ldots$ sum on the right-hand side of (66), or with a term of $ \Phi (x, w) $. The real parts of the exponents of the terms in the second, third, $ \ldots $ sum of the right-hand side of (66) are no less than $ 2 \sigma_t $, that is from a certain $ t $ onward they are certainly greater than the real part of $ \sigma_t + k $. Hence, the expansion of $ F (x, w_2, \ldots, w_ {n + 2}) $ must result in at least one term whose exponent is equal to $ n_t + k $. The exponents of the terms $ \Phi (x, w) $ are made up of integer linear combinations of $ 1, n_0, n_1, \ldots, n_ {t-1} $. Likewise, $ k $ is made up of $ 1 $ and finitely many $ n_i $ linearly using integer coefficients.

\end{proof}

In \hyperlink{Theorem 21}{Theorem 21} we formulated the property that, for very extensive classes of series iterated over arbitrary powers of the argument, they do not \textit{formally} satisfy any analytic partial differential equation. We now want to extend this property to \textit{functions} that can be represented by such series. \\

To achieve this, we first of all require of the exponents $ n_i = \sigma_i + \sqrt {-1} \tau_i $ that $ \lim_ {i \to \infty} \sigma_i = + \infty $, secondly that $ | \frac {\tau_i} {\sigma_i} | $ remains bounded from a certain $ i $ on. This means, geometrically speaking, that all $ n_i $ from a certain $ i $ onwards lie within an angle from the aperture $ \gamma <\pi $, containing positive part of the real axis, while leaving the imaginary axis (except for the origin) completely untouched. \\

In order to formulate the assumption regarding the convergence of the series (57), we introduce the concept of \textit{absolute convergence in the sharp sense}. We call a series $ \sum a_i x ^ {n_i} $ with complex exponents $ n_i = \sigma_i + \sqrt {-1} \tau_i $, which satisfy the above conditions, absolutely convergent in the sharp sense for $ | x | = \nu $, if the series $ \sum | a_i | \nu ^ {\sigma_i} e ^ {2 \pi | \tau_i |} $ converges. Such a series converges absolutely on the whole circle $|x| = \nu $, as well as in every non-zero point within this circle, and absolutely in the sharp sense on every concentric circle of smaller radius. If $ \sum a_ix ^ {n_i} $ converges absolutely in the sharp sense for $ | x | = \nu $, then the formally constructed derivative $ \sum n_i a_ix ^ {n_t-1} $ of this series converges absolutely in the sharp sense for $ | x | = \nu- \epsilon $, where $ \epsilon $ is an arbitrarily small fixed positive quantity. Since in order to prove this, from the convergence of $ \sum \alpha_i \nu ^ {\sigma_i} e ^ {2 \pi | \tau_i |} $, where $ | a_i | = \alpha_i $ is set, the convergence of the series $ \sum \alpha_i | \sigma_i + \sqrt {-1} \tau_i | (\nu- \epsilon) ^ {\sigma_t-1} e ^ {2 \pi | \tau_i |} $ must be deduced. But this follows from
$$
\lim_ {i \to \infty} | \sigma_i + \sqrt {-1} \tau_i |
\frac {(\nu- \epsilon) ^ {\sigma_i-1}} {\nu ^ {\sigma_i}}
= | \frac {\sigma_i + \sqrt {-1} \tau_i} {\sigma_i} | \sigma_i
\frac {(\nu- \epsilon) ^ {\sigma_i-1}} {\nu ^ {\sigma_i}} = 0.
$$
Since $ \frac {\sigma_i + \sqrt {-1} \tau_i} {\sigma_i} $ is absolutely bounded from a certain $ i $ onward and $ \lim_ {i \to \infty} \sigma_i (\frac {\nu- \epsilon} {\nu}) ^ {\sigma_i} = 0. $ It follows from this that every formally constructed derivative of $ \sum a_i x ^ {n_i} $ is absolutely convergent in the sharp sense for $ | x | = \nu - \epsilon $. And from the uniformity of the convergence, which results from the proof of the uniqueness theorem below, it follows that the formally constructed derivatives represent the derivatives of the function represented by the original series. \\

If $ \sum a_i x ^ {n_i} $ converges absolutely in the sharp sense, and if all $ \sigma_i $ are positive, then, as we shall immediately prove in detail, the majorant $ \sum \alpha_i \nu ^ {\sigma_i} e ^ {2 \pi | \tau_i |} $ also becomes arbitrarily small with decreasing $ \nu $. From this it follows that the result of substituting (57) in the left-hand side of (58) converges absolutely in the sharp sense on a certain circle around the origin, and there represents the result of substituting the function represented by (57) in (58). Thus it remains only to prove the uniqueness: If a series $ \sum a_i x ^ {n_i} $ converges absolutely in the sharp sense on a circle around the origin , and if the function represented by it equals 0, then all coefficients $ a_i$ vanish . \\

We can obviously assume that all $ n_i = \sigma_i + \sqrt {-1} \tau_i $ are mutually distinct and that $ \sigma_i $ are ordered in non-decreasing order. We can also assume that the first $ k (k> 0) \sigma_i $ vanish, since this otherwise can be achieved by multiplying by $ x ^ {- \sigma_0} $. Thus let $ \sigma_0 = \ldots = \sigma_k = 0 $, $ \sigma_ {k + 1}> 0 $. We claim first that every positive $ \epsilon $ can be assigned such a $ \delta $ that $ | \sum_ {k + 1} ^ \infty a_i x ^ {n_i} | <\epsilon $ as soon as $ | x | <\delta $. In general, let $ | a_i | = \alpha_i $. From the absolute convergence in the sharp sense of $ \sum a_ix ^ {n_i} $ it follows that the Dirichlet series
\begin {align}
\sum_ {i = k + 1} ^ \infty \alpha_i e ^ {2 \pi | \tau_i |} e ^ {- \sigma_i s}
\end {align}
converges absolutely and therefore uniformly in a certain $ s $-half-plane. From this it follows that one can specify $ \delta_1 $ such that the absolute value of (69) becomes smaller than $ \epsilon $ as soon as the real part of $ s $ becomes greater than $ \delta_1 $. From this it follows, however, that the absolute value of $ \sum_ {i = k + 1} ^ \infty a_i x ^ {n_i} $ becomes smaller than $ \epsilon $ as soon as $ | x | <\delta = e ^ {- \delta_1} $. \\

As a result, if all the coefficients $ a_0, \ldots, a_k $ were non-zero, the sum $ a_0 x ^ {\sqrt {-1} \tau_0} + \ldots + a_k x ^ {\sqrt {-1} \tau_k} $ would converge to $ 0 $ with $ x $. So it suffices to show that there is such a sequence of real values of $ s $ that increase to infinity such that
$$ f (s) = a_0x ^ {\sqrt {-1} \tau_0} + \ldots
+ a_k x ^ {\sqrt {-1} \tau_k} $$
converges to a value other than $ 0 $. The entire function $ f (s) $ cannot vanish identically, since
otherwise it would also have to vanish for purely imaginary $ s = \sqrt {-1} t $, and one term in $ f (s) $ would dominate for sufficiently large $ t $ , since $ \tau_0, \ldots, \tau_k $ are mutually distinct. For real $ s = s_0 $, assume $ f (s_0) \neq 0 $. Let $ m $ be an arbitrarily large positive integer. Then, as is well-known, there is a non-zero positive integer $ s_m $, which can be found with the help of the famous Dirichlet approximation, such that the numbers $ s_m \tau_0, \ldots, s_m \tau_k $ differ from certain integer multiples of $ 2 \pi $ by less than $ \frac {1} {m ^ 2} $. And consequently the value of $ f (s_0 + ms_m) $ will converge to $ f (s_0) \neq 0 $ with increasing $ m $.

We can summarize this result as

\hypertarget {Theorem 22} {}
\begin {theorem}
If an absolutely convergent series in the sharp sense
$$ y (x) = \sum a_i x ^ {n_i} \hspace {30pt} (n_i = \sigma_i + \sqrt {-1} \tau_i) $$
has the property that
\begin {enumerate}
\item $ \lim \sigma_i = + \infty $;
\item $ | \frac {\tau_i} {\sigma_i} | $ remains bounded from a certain $ i $ onward;
\item if $ \sigma_i $ is the smallest $ \sigma $ corresponding to a non-zero $ \tau $, and if all smaller $ \sigma $ are non-negative integers, then $ \sigma_i $ is not a non-negative integer and there is no second exponent $ n $ with the same $ \sigma_i $;
\end {enumerate}
and if furthermore the function $ y (x) $ satisfies a differential equation, the left-hand side of which is analytic in $ x, y-c, y'-c_1, \ldots, y ^ {(n)} - c_n, \frac {1} {y ^ { (n + 1)}}, \ldots, \frac {1} {y ^ {(n + m)}} $, where the real parts of the exponents of the initial terms of $ x, y-c, y'-c_1, \ldots, y ^ {(n)} - c_n, \frac {1} {y ^ {(n + 1)}}, \ldots, \frac {1} {y ^ {(n + m)}} $ are non-negative and $ c_1, \ldots, c_n $ are finite constants, then the exponents $ n_i $ have a finite integer basis.
\end {theorem}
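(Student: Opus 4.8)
The plan is to reduce Theorem 22 to \hyperlink{Theorem 21}{Theorem 21}: under the sharp-sense convergence hypotheses I will show that the \emph{function} $y(x)$ satisfying the prescribed differential equation forces the \emph{series} $\sum a_i x^{n_i}$ to satisfy it \emph{formally}, whereupon Theorem 21 yields the finite integer basis at once.

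First I would observe that hypotheses (1) and (3) of Theorem 22 are precisely the assumptions imposed on the $n_i$ in Theorem 21: (1) supplies the increasing real parts tending to $+\infty$, and (3) is the restatement of Theorem 21's condition (2) on the non-real exponent of least real part. Hypothesis (2), the boundedness of $|\tau_i/\sigma_i|$, is the genuinely new ingredient. It plays no role in the purely formal Theorem 21, but it is exactly what confines the $n_i$ to an angle of aperture $\gamma < \pi$ and so makes the notion of absolute convergence in the sharp sense, developed in the paragraphs preceding this theorem, applicable here.

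The core of the argument then runs as follows. By the convergence facts established just above, every formally constructed derivative $y^{(j)}(x)$ again converges absolutely in the sharp sense on a slightly smaller circle and there represents the true derivative. For the reciprocal arguments $1/y^{(n+1+j)}$ I would argue exactly as in Theorem 21: the initial term of $y^{(n+1+j)}$ has an exponent of negative real part, so $1/y^{(n+1+j)}$ expands into a series all of whose exponents have positive real part, and the angle condition (2) keeps these expansions within the admissible aperture so that they too converge in the sharp sense. Substituting $y-c$, the $y^{(j)} - c_j$, and the $1/y^{(n+1+j)}$ into the analytic left-hand side then yields, by the substitution principle proved before the theorem, a series $\sum b_k x^{m_k}$ that converges absolutely in the sharp sense and represents the result of inserting the \emph{function} $y(x)$ into that left-hand side --- which is identically $0$ by hypothesis. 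The uniqueness theorem established immediately before Theorem 22 now forces every coefficient $b_k$ to vanish, and this is exactly the assertion that $y(x)$ formally satisfies the given differential equation. An application of Theorem 21 finishes the proof.

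The main obstacle I anticipate lies not in the formal elimination --- the delicate rank reduction via Weierstrass's preparation theorem was already carried out inside Theorem 21 and may be invoked wholesale --- but in verifying the analytic bridge carefully: that substituting the sharp-sense convergent series, including the reciprocated series $1/y^{(n+1+j)}$ obtained by inverting a series whose leading exponent has negative real part, into the convergent left-hand side again produces a sharp-sense convergent series genuinely representing the composite function. This is where hypothesis (2) together with the preparatory convergence and uniqueness lemmas must be combined with care; once the composite series is known both to represent $0$ and to converge in the sharp sense, the passage from vanishing of the function to vanishing of all formal coefficients is immediate.
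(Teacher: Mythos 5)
Your proposal is correct and follows essentially the same route as the paper: Ostrowski's own argument for Theorem 22 consists precisely of the preparatory material on sharp-sense convergence of the formal derivatives and of the substituted series, the observation that the substituted series represents the (identically vanishing) result of inserting the function into the equation, and the uniqueness theorem proved via Dirichlet approximation, after which the conclusion is read off from Theorem 21. The only point worth noting is that the paper proves the uniqueness statement in detail (reducing to a finite exponential sum $a_0x^{\sqrt{-1}\tau_0}+\ldots+a_kx^{\sqrt{-1}\tau_k}$ and using Dirichlet's approximation theorem), whereas you cite it as already established; since it indeed appears immediately before the theorem, this is legitimate.
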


At the same time, this theorem contains a generalization of \hyperlink{Theorem 12}{Theorem 12} to the case where the exponents are complex\footnote {In fact, every algebraic differential equation whose left-hand side is a polynomial in $ x $, the unknown function and its derivatives, can be written in the form (58), possibly by division with a product of powers of the unknown function and its derivatives.}. The substitution $ x = e ^ {- s} $ then leads to an analogous theorem about Dirichlet series $ \sum a_i e ^ {- \lambda_i s} $, where $ \lambda_i $ may be complex, and an analogous  requirement of absolute convergence in the sharp sense must be made. \\

In formulating the theorem at the beginning of this section, we made assumptions about the form of the differential equation (58), which could possibly be formulated a little more generally. To that end, the theorem that among the exponents $ n_i $ there are only finitely many linearly independent ones probably holds even if $ F $ is a power series iterated over any integer (whether positive or negative) powers of its arguments converging in a certain neighborhood of the origin. However, this case no longer permits use of the methods used in this section. The reason why the validity of \hyperlink{Theorem 21}{Theorems 21} and \hyperlink{Theorem 22}{22} also appears plausible in this case is easy to overlook. If there were an infinite number of linearly independent exponents among the exponents $ n_i $, then we would be able to (at least formally) introduce an infinite number of arbitrary constants into the integral by repeatedly winding around the origin. This conclusion can be carried out completely in many cases. For example, with their help one can prove that the function $ \sum_ {i = 1} ^ \infty x ^ {\text {log} (p_i)} $, where $ p_i $ is the $ i $-th prime number, satisfies no differential equation whose left-hand side is an unambiguous, generally analytic function of the derivatives in a neighborhood of the origin. And analogous power series in several variables can be constructed. For example, the analogous fact can be proven for the series
$$ \sum x ^ n y ^ {\text {log} (n)}, $$
which arises from the series $ \sum x ^ n n ^ {- s} $ by a simple substitution. And with that a new - third - proof of the property that the series $ \sum x ^ n n ^ {- s} $ does not satisfy any algebraic partial differential equation has been given. I will elaborate on these remarks elsewhere and in doing so I will delve into their close connection with a known conclusion, first used by Mr. H. Bohr in the theory of Riemann's $ \zeta $ function.

\begin {center}
(Received February 3, 1919.)
\end {center}

\end{document}